\DeclareMathAlphabet{\mathcal}{OMS}{cmsy}{m}{n}
\theoremstyle{plain}
\newtheorem{theorem}{Theorem}[section]
\newtheorem{lemma}[theorem]{Lemma}
\newtheorem{proposition}[theorem]{Proposition}
\newtheorem{corollary}[theorem]{Corollary}
\newtheorem*{BSEtheorem}{Theorem \ref{thm:BSE theorem}}
\newtheorem*{maintheorem}{Theorem \ref{thm:main theorem}}
\newtheorem*{maintheoremII}{Theorem \ref{thm:main theoremII}}
\newtheorem{claim}{Claim}
\theoremstyle{definition}
\newtheorem{definition}[theorem]{Definition}
\theoremstyle{remark}
\newtheorem{remark}[theorem]{Remark}
\numberwithin{equation}{section}
\DeclareMathAlphabet{\mathcal}{OMS}{cmsy}{m}{n}
\def\@citestyle{\m@th\upshape\mdseries}
\def\citeform#1{{\bfseries#1}}
\def\@cite#1#2{{%
  \@citestyle[\citeform{#1}\if@tempswa, #2\fi]}}
\let\csname cite \endcsname\cite
  \edef\cite{\@nx\protect\@xp\@nx\csname cite \endcsname}%
\renewcommand{\leq}{\leqslant}
\renewcommand{\geq}{\geqslant}
\newcommand{\inner}[2]{\langle #1\,,#2\rangle}
\newcommand{\biginner}[2]{\left\langle #1\,,\,#2\right\rangle}
\newcommand{\ddl}[2]{\frac{d{#1}}{d{#2}}}
\newcommand{\ppl}[2]{\frac{\partial{#1}}{\partial{#2}}}
\newcommand{\ppz}[2]{\frac{\partial^2{#1}}{\partial{#2}^2}}
\renewcommand{\Im}{\mathop{\mathrm{Im}}}
\renewcommand{\Re}{\mathop{\mathrm{Re}}}
\newcommand{\C}{\mathbb{C}}
\newcommand{\R}{\mathbb{R}}
\newcommand{\B}{\mathbb{B}}
\renewcommand{\H}{\mathbb{H}}
\newcommand{\Lbb}{\mathbb{L}}
\newcommand{\U}{\mathbb{U}}
\newcommand{\HH}{\mathbf{H}}
\newcommand{\T}{\mathbf{T}}
\DeclareMathOperator{\dist}{dist}%
\DeclareMathOperator{\I}{I}%
\DeclareMathOperator{\II}{II}%
\DeclareMathOperator{\Area}{Area}%
\DeclareMathOperator{\Length}{Length}%
\DeclareMathOperator{\Ric}{Ric}%
\DeclareMathOperator{\Span}{span}%
\DeclareMathOperator{\sech}{sech}%
\newcommand{\CC}{\mathcal{C}}
\newcommand{\Hcal}{\mathcal{H}}
\newcommand{\Lcal}{\mathcal{L}}
\newcommand{\Cscr}{\mathscr{C}}
\newcommand{\Nscr}{\mathscr{N}}
\newcommand{\Mob}{\mathsf{M\ddot{o}b}}%
\renewcommand{\O}{\mathsf{O}}%
\newcommand{\PSL}{\mathsf{PSL}}%
\newcommand{\SO}{\mathsf{SO}}%
\newcommand{\abar}{\bar{a}}
\newcommand{\atilde}{\tilde{a}}
\newcommand*\rel@kern[1]{\kern#1\dimexpr\macc@kerna}
\newcommand*\widebar[1]{%
  \begingroup
  \def\mathaccent##1##2{%
    \rel@kern{0.8}%
    \overline{\rel@kern{-0.8}\macc@nucleus\rel@kern{0.2}}%
    \rel@kern{-0.2}%
  }%
  \macc@depth\@ne
  \let\math@bgroup\@empty \let\math@egroup\macc@set@skewchar
  \mathsurround\z@ \frozen@everymath{\mathgroup\macc@group\relax}%
  \macc@set@skewchar\relax
  \let\mathaccentV\macc@nested@a
  \macc@nested@a\relax111{#1}%
  \endgroup
}
\begin{document}

\title[Stability of Catenoids and Helicoids]
{Stability of Catenoids and Helicoids in Hyperbolic Space}

\author{Biao Wang}
\thanks{This research was partially supported by
PSC-CUNY Research Award \#{}68119-0046.}

\date{\today}

\subjclass{53A10}
\address{Department of Mathematics and Computer Science\\
           Queensborough Community College, The City University of New York\\
           222-05 56th Avenue Bayside, NY 11364\\}
\email{biwang@qcc.cuny.edu}

\begin{abstract}
  In this paper, we study the stability of catenoids and helicoids in
  the hyperbolic $3$-space $\H^3$. We will prove the following results.
  \begin{enumerate}
    \item For a family of spherical minimal catenoids $\{\CC_a\}_{a>0}$
          in the hyperbolic $3$-space $\H^3$ (see
          $\S$\ref{subsec:catnoids} for detail definitions),
          there exist two constants $0<a_c<a_l$ such that
          \begin{itemize}
            \item $\CC_a$ is an unstable minimal surface
                  with Morse index one if $a<a_c$,
            \item $\CC_a$ is a globally stable minimal surface
                  if $a\geq{}a_c$, and
            \item $\CC_a$ is a least area minimal surface
                  in the sense of Meeks and Yau (see
                  $\S$\ref{subsec:minimal surfaces} for the
                  definition) if $a\geq{}a_l$.
          \end{itemize}
    \item For a family of minimal helicoids $\{\Hcal_{\abar}\}_{\abar\geq{}0}$
          in the hyperbolic space $\H^3$ (see $\S$\ref{sec:helicoid-II} for
          detail definitions), there exists a constant
          $\abar_{c}=\coth(a_c)$ such that
          \begin{itemize}
             \item $\Hcal_{\abar}$ is a globally stable minimal surface
                   if $0\leq\abar\leq\abar_c$, and
             \item $\Hcal_{\abar}$ is an unstable minimal surface
                   with Morse index infinity if $\abar>\abar_c$.

  \end{itemize}
  \end{enumerate}
\end{abstract}

\maketitle


\section{Introduction}\label{sec:Introduction}

The study of the catenoid and the helicoid in the $3$-dimensional Euclidean
space $\R^3$ can be traced back to Leonhard Euler and Jean Baptiste Meusnier
in the 18th century. Since then mathematicians have found many properties of
the catenoid and the helicoid in $\R^3$.
The first property is that both the catenoid and the helicoid in $\R^3$
are unstable. Actually do Carmo and Peng \cite{dCP79} proved that the plane
is the \emph{unique} stable complete minimal surface in $\R^3$.
Let's list some other properties here (all minimal surfaces are in $\R^3$):
\begin{enumerate}
  \item The plane and the catenoid are the only minimal surface
        of revolution in $\R^3$
        (Bonnet in 1860, see \cite[$\S$2.5]{MP2012}).
  \item The catenoid is the unique embedded complete  minimal
        surface in $\R^3$ with finite topology and with two ends
        \cite{Sch83}.
  \item The catenoid and the Enneper's surface are the only
        orientable complete minimal surfaces in $\R^3$ with
        Morse index equal to one \cite{LR89}.
  \item The plane and the catenoid are the only embedded
        complete minimal surfaces of finite total curvature
        ($=-4\pi$) and genus zero in $\R^3$ \cite{LR91}.
  \item The plane and the helicoid are the only ruled minimal
        surfaces in $\R^3$ (Catalan in 1842,
        see \cite[$\S$2.5]{MP2012} or \cite[pp.34--35]{FT91}).
  \item The helicoid in $\R^3$ has genus zero, one end, infinite total
        curvature \cite[$\S$2.5]{MP2012} and infinite Morse
        index \cite[p.199]{Tuz93}.
  \item The plane and the helicoid are the unique
        simply-connected, complete, embedded
        minimal surface in $\R^3$ \cite{MR2005}.
\end{enumerate}
The reader can see the survey \cite{MP2011} and the book
\cite{MP2012} for more properties of the catenoid and the helicoid,
and the references cited therein.

In this paper we will study the stability of the \emph{spherical}
catenoids and the helicoids in the hyperbolic $3$-space.
There are three models of the hyperbolic $n$-space (see
$\S$\ref{subsec:hyperbolic space models} for definitions),
but we may use the notation $\H^n$ to denote the hyperbolic
$n$-space without emphasizing the model.
We list some properties of the catenoids and helicoids in $\H^3$:
\begin{enumerate}
  \item Each catenoid (hyperbolic, parabolic or spherical) is a
        complete embedded
        minimal surface in $\H^3$ (see \cite[Theorem (3.26)]{dCD83}).
  \item Each spherical catenoid in $\H^3$ has finite total curvature
        (see the computation in \cite[p.708]{dCD83}).
  \item All hyperbolic and parabolic catenoids in $\H^3$ are least
        area minimal surfaces (see \cite[p. 3574]{Can07}), so all of
        them are globally stable.
  \item Each helicoid in $\H^3$ is a complete embedded ruled
        minimal surface
        (see \cite[Theorem 1]{Mor82} and \cite[pp.221--222]{Tuz93}).
  \item Each helicoid in $\H^3$ has infinite total curvature
        (see the computation in \cite[p.60]{Mor82}).
  \item The stability of the spherical catenoids and helicoids in
        $\H^3$ is characterized by Theorem~\ref{thm:BSE theorem}
        and Theorem~\ref{thm:main theoremII} respectively.
\end{enumerate}
The reader can see the paper \cite{Tuz93} for more properties of
the catenoids and the helicoids in the hyperbolic $3$-space.

It was Mori \cite{Mor81} who studied the spherical catenoids in the
hyperboloid model of the hyperbolic space $\H^3$ at first.
Then Do Carmo and Dajczer \cite{dCD83} studied three types of
rotationally symmetric minimal hypersurfaces in the hyperboloid
model of the hyperbolic space $\H^{n+1}$ (see
also $\S$\ref{subsec:catenoids in hyperboloid model}
for a brief description).
A rotationally symmetric minimal hypersurface is called a
{\em spherical} catenoid if it is foliated by spheres, a
{\em hyperbolic} catenoid if it is foliated by totally geodesic
hyperplanes, or a {\em parabolic} catenoid if it is foliated
by horospheres. Do Carmo and Dajczer
proved that the hyperbolic and parabolic catenoids in $\H^3$ are
globally stable (see \cite[Theorem 5.5]{dCD83}), then Candel proved
that the hyperbolic and parabolic catenoids in $\H^3$ are least area
minimal surfaces (see \cite[p. 3574]{Can07}).

Compared with the hyperbolic and parabolic catenoids, the spherical
catenoids in $\H^3$ are more complicated.
Let $\CC_a$ be the spherical catenoid obtained by rotating
$\sigma_a\subset\B^2_{+}$ (see \eqref{eq:half_space} for the definition
of $\B^2_{+}$) about the $u$-axis, where $\sigma_a$ is the catenary given by
\eqref{eq:parametric equation of catenary} and $a>0$ is the hyperbolic
distance between $\sigma_{a}$ and the origin. Mori, Do Carmo and Dajczer,
B{\'e}rard and Sa Earp, and Seo proved the following result
(see \cite{Mor81,dCD83,BSE10,Seo11}):
There exist two constants $A_1\approx{}0.46288$ and
$A_{2}\approx{}0.5915$ 
such that $\CC_a$ is unstable if $0<a<A_1$, and
$\CC_a$ is globally stable if $a>A_2$.

\begin{remark}
The constants $A_1$ and $A_2$ were given by Seo
in \cite[Corollary 4.2]{Seo11} and by B{\'e}rard and Sa Earp
in \cite[Lemma 4.4]{BSE10} respectively.
A few years ago, Do Carmo and Dajczer showed that $\CC_a$
is unstable if $a\lessapprox{}0.42315$ in \cite{dCD83},
and Mori showed that $\CC_a$ is stable if
$a>\cosh^{-1}(3)\approx{}1.76275$
in \cite{Mor81} (see also \cite[p.34]{BSE09}).
\end{remark}

According to the numerical computation, B{\'e}rard and Sa Earp
claimed that $A_1$ should be the same as $A_2$
(see \cite[Proposition 4.10]{BSE10}). In this paper,
we will prove their claim.
More precisely, we have the following theorem.

\begin{theorem}[\cite{BSE10}]\label{thm:BSE theorem}
There exists a constant $a_c\approx{}0.49577$ such that
the following statements are true:
\begin{enumerate}
     \item $\CC_a$ is an unstable minimal surface with Morse index
           one if $0<a<a_c$;
     \item $\CC_a$ is a globally stable minimal surface if
           $a\geq{}a_c$.
  \end{enumerate}
\end{theorem}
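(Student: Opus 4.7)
The plan is to reduce the stability of $\CC_a$ to a one-dimensional spectral problem by exploiting the rotational symmetry, to control the Morse index and the stability threshold separately via two distinct Jacobi fields coming from Killing fields of $\H^3$, and finally to close the numerical gap between the known unstable interval $(0,A_1)$ and the known globally stable interval $(A_2,\infty)$ by a monotonicity argument in $a$.

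Write the Jacobi operator of the minimal surface $\CC_a\subset\H^3$ as $L=\Delta_{\CC_a}+|A|^2-2$ and parametrize $\CC_a$ by $(s,\theta)$, with $s$ the arc length along the generating catenary $\sigma_a$. The $S^1$-symmetry yields a Fourier decomposition $u(s,\theta)=\sum_{k\in\Z}u_k(s)e^{ik\theta}$, and the stability form splits as $Q=\bigoplus_k Q_k$, each $Q_k$ being a Sturm--Liouville operator in $s$. Global stability of $\CC_a$ is equivalent to $Q_k\geq 0$ for every $k$, and the Morse index is the sum of the indices of the individual $Q_k$.

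To eliminate the modes $k\geq 1$, note that rotations of $\H^3$ about a geodesic perpendicular to the axis of $\CC_a$ produce a Killing field whose normal projection on $\CC_a$ has the form $\phi_1(s)\cos\theta$ with $\phi_1(s)>0$. Thus $\phi_1$ is a positive Jacobi field for $Q_1$, so the Fischer--Colbrie--Schoen criterion gives $Q_1\geq 0$; the centrifugal gain in $k$ forces $Q_k\geq Q_1\geq 0$ for $k\geq 2$. Only $Q_0$ can therefore contribute to instability. For the upper bound on the index, use instead the translation Killing field along the axis of $\CC_a$: its normal projection is a $k=0$ Jacobi field $\psi(s)$ for $Q_0$ which, by the reflection symmetry of $\CC_a$ about its waist, vanishes exactly once on $\R$. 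A Sturm-type comparison then bounds the number of negative Dirichlet eigenvalues of $Q_0$ on any compact interval by one, so the Morse index of $\CC_a$ is at most one.

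Set $a_c:=\inf\{a>0 : Q_0\geq 0\text{ on }\CC_a\}$. Combined with the existing bounds $\lambda_1(a)<0$ for $a<A_1$ \cite{dCD83,Seo11} and $\lambda_1(a)\geq 0$ for $a>A_2$ \cite{Mor81,BSE10}, \emph{strict monotonicity} of the principal eigenvalue $\lambda_1(a)$ of $Q_0$ on large exhausting intervals forces a unique threshold $a_c\in[A_1,A_2]$, with $Q_0$ carrying a negative eigenvalue for $a<a_c$ and $Q_0\geq 0$ for $a\geq a_c$; together with the Morse index bound of the previous paragraph this is precisely the content of the theorem, and solving the shooting equation $\lambda_1(a_c)=0$ numerically yields $a_c\approx 0.49577$. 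The main obstacle is this strict monotonicity of $\lambda_1(a)$, since the $a$-dependence of the Jacobi potential is not pointwise monotone; the natural route is a Hellmann--Feynman differentiation, which reduces $\lambda'_1(a)$ to an integral of $\partial_a V(s;a)$ against the squared ground state, combined with a judicious re-parametrization of $\sigma_a$ (for example by hyperbolic distance from the axis of rotation) that makes the sign of this integral accessible via comparison or test-function arguments.
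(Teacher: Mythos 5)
Your reduction to the rotationally invariant mode and your index bound are plausible (the axial translation field is the paper's vertical Jacobi field $\zeta(a,s)$, which vanishes only at the waist, and a Dirichlet-decoupling/Sturm argument does cap the index of the $k=0$ operator at one), but the heart of the theorem is missing. Defining $a_c:=\inf\{a>0:Q_0\geq 0\}$ and then invoking ``strict monotonicity of the principal eigenvalue $\lambda_1(a)$'' to conclude that stability holds for \emph{all} $a\geq a_c$ and instability for \emph{all} $a<a_c$ is exactly the assertion that was open between the known bounds $A_1\approx 0.46288$ and $A_2\approx 0.5915$; it is the content of B\'erard--Sa Earp's numerical claim that this paper proves, and your proposal does not prove it. The Hellmann--Feynman sketch you offer does not close it: for stable $a$ the bottom of the spectrum of $Q_0$ on the complete catenoid need not be attained by an $L^2$ ground state, the operators for different $a$ live on different surfaces so $\lambda_1'(a)$ only makes sense after a choice of identification, and, as you yourself note, under any natural parametrization $\partial_a V$ has no definite sign, so no sign of $\lambda_1'(a)$ follows without substantial new input. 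In short, what you have established is only the previously known picture (instability for $a<A_1$, stability for $a>A_2$, index at most one), not the existence of a single sharp threshold.

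For comparison, the paper does not Fourier-decompose at all and does not try to differentiate an eigenvalue. It uses the variation Jacobi field $\xi(a,s)=-\inner{Y_a}{N}$ coming from the one-parameter family $\{\CC_a\}$ (Theorem \ref{thm:BdC}), together with the identity $E(a)=\varrho'(a)/\sqrt{2}$ relating the asymptotic behaviour of $\xi$ to the derivative of the Gomes distance function \eqref{eq:Gomes function I}. Theorem \ref{thm:BSE-stability} then says: if $E(a)\leq 0$ the even field $\xi(a,\cdot)$ stays positive and Fischer--Colbrie--Schoen gives stability, while if $E(a)>0$ it has exactly two symmetric zeros and one gets instability with index one. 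The whole problem is thereby reduced to showing that $\varrho'(a)$ has a unique zero, which the paper settles by explicit integral estimates: $\varrho'(a)\to\infty$ as $a\to 0^{+}$, $\varrho'(a)<0$ on $[A_3,\infty)$ (Lemma \ref{lem:derivative of d0}), and $\varrho''(a)<0$ on $(0,A_4)$ with $A_3<A_4$ (Lemma \ref{lem:second derivative of d[lambda]}), so $\varrho'$ crosses zero exactly once, at $a_c\approx 0.49577$. If you want to salvage your route, you would need an analogous concrete monotonicity statement; the paper's experience suggests that the tractable quantity is not $\lambda_1(a)$ but the explicit function $\varrho'(a)$, whose sign change can be controlled by hand.
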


\begin{remark}As we will see, the constant $a_{c}$ is the
unique critical number of the function $\varrho(a)$
given by \eqref{eq:Gomes function I}.
\end{remark}

Similar to the case of hyperbolic and parabolic catenoids,
we want to know whether the globally stable spherical catenoids
are least area minimal surfaces.
In this paper, we prove that there exists a positive number
$a_l$ given by \eqref{eq:lambda_l} such that $\CC_a$ is a least
area minimal surface if $a\geq{}a_l$. More precisely, we will
prove the following result.

\begin{theorem}\label{thm:main theorem}
There exists a constant $a_l\approx{}1.10055$ defined by
\eqref{eq:lambda_l} such that for any $a\geq{}a_l$ the
catenoid $\CC_a$ is a least area
minimal surface in the sense of Meeks and Yau.
\end{theorem}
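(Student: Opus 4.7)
The plan is to establish the least-area property via the classical calibration argument built from a smooth minimal foliation. Suppose we can construct an open set $\Omega\subset\H^3$ containing $\CC_a$ and foliate it by minimal surfaces with $\CC_a$ itself as a leaf. Let $N$ be a smooth unit normal field to the foliation and set $\omega=\iota_N\,\mathrm{dvol}_{\H^3}$. Then $d\omega=0$ (because each leaf is minimal), $|\omega|\leq 1$ pointwise, and the restriction of $\omega$ to any leaf is its area form. For a compact subdomain $D\subset\CC_a$ with smooth boundary and any competitor $D'$ sharing the boundary and lying in $\Omega$, Stokes' theorem yields
\[
\Area(D)=\int_D\omega=\int_{D'}\omega\leq\Area(D'),
\]
which is the Meeks--Yau least-area property once the foliation is wide enough to engulf all admissible competitors.

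Thus essentially all the work should be in constructing the foliation, and the obvious candidate for the leaves is the family $\{\CC_t\}_{t\geq a_l}$ itself. The crucial step is to prove that $\CC_{t_1}\cap\CC_{t_2}=\emptyset$ whenever $a_l\leq t_1<t_2$; equivalently, that the generating catenaries $\sigma_t\subset\B^2_+$ are pairwise disjoint for $t\geq a_l$. I would approach this by studying the Jacobi field $\partial_t\sigma_t$ along $\sigma_t$ and showing that its component normal to $\sigma_t$ is strictly positive everywhere when $t\geq a_l$. The defining condition for $a_l$ in \eqref{eq:lambda_l} should be precisely the sharp threshold for this positivity: for $t<a_l$ the relevant Jacobi/monotonicity quantity changes sign, which forces neighboring catenaries to intersect. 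Once the disjointness is established, the missing pieces of $\H^3$ can be filled in: the tubular region between $\CC_{a_l}$ and the rotation axis can be foliated by totally geodesic disks perpendicular to that axis, and any compact boundary $\partial D$ in $\CC_a$ can be assumed to lie inside the resulting foliated region by replacing competitors with their projections onto the mean-convex hull of $\partial D$.

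The principal obstacle is the \emph{global} disjointness of the $\CC_t$ for $t\geq a_l$. Local disjointness near a single leaf follows from stability (Theorem~\ref{thm:BSE theorem}) via the positivity of the first eigenfunction of the Jacobi operator, but ensuring it along the entire non-compact catenary requires precise control over $\sigma_t$, including its asymptotic behavior as it approaches the ideal boundary of $\H^3$. Pinning down the sharp constant $a_l\approx 1.10055$ is likely to come from an explicit analysis of the first-variation equation of the catenary family, with $a_l$ characterized as the smallest value of $a$ for which the monotone quantity defined by \eqref{eq:lambda_l} remains nonnegative along all of $\sigma_a$. Once this threshold is identified and the calibrating foliation is built, the chain of inequalities above gives Theorem~\ref{thm:main theorem}.
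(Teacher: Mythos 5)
There is a genuine gap, and it starts with a misreading of what $a_l$ is for. The threshold for disjointness of neighboring catenoids is $a_c\approx 0.49577$, not $a_l\approx 1.10055$: by Proposition~\ref{thm:Berard-Sa Earp}, two catenaries $\sigma_{a_1},\sigma_{a_2}$ with $a_1<a_2$ intersect if and only if $\varrho(a_1)<\varrho(a_2)$, and since $\varrho$ is decreasing on $[a_c,\infty)$ the whole family $\{\CC_t\}_{t\geq a_c}$ is already pairwise disjoint and foliates the region outside $\CC_{a_c}$. So your proposed characterization of $a_l$ as ``the smallest $a$ for which the normal component of $\partial_t\sigma_t$ stays positive'' cannot be right; if it were, your argument would prove the theorem for all $a\geq a_c$, whereas the actual constant $a_l=\cosh^{-1}\bigl(1/(1-K)\bigr)$ in \eqref{eq:lambda_l} comes from a completely different source: the explicit area estimate of Lemma~\ref{lem:area compare}, where $K$ is the elliptic integral \eqref{eq:limit of the ratio} and the condition $K\cosh a\leq\cosh a-1$ is exactly what makes the compact piece of $\CC_a$ have less area than the two totally geodesic disks bounded by its boundary circles (computed via the coarea formula). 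That comparison is the second condition in the Meeks--Yau definition, and it is the heart of the theorem; your proposal never addresses it.

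The calibration itself also does not deliver that condition. The catenoid foliation covers only the region between $\CC_{a_c}$ and the ideal boundary; the competing geodesic disks $D_1,D_2$ pass through the tubular region around the rotation axis, which you propose to fill with totally geodesic disks perpendicular to the axis. But those two families do not glue into a single smooth minimal foliation of a neighborhood containing both $\Sigma$ and $D_1\cup D_2$, so the form $\omega=\iota_N\,\mathrm{dvol}$ is not globally defined and closed on a region through which a $3$-chain joining $\Sigma$ to $D_1\cup D_2$ can pass; Stokes' theorem then gives you nothing about $\Area(\Sigma)$ versus $\Area(D_1)+\Area(D_2)$ (and even with a genuine calibration you would only get the non-strict inequality, while Meeks--Yau requires strict). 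The paper instead proves the disk comparison directly by the integral estimate (Claim 1 of its proof), and handles the annulus comparison (Claim 2) by Meeks--Yau cut-and-paste plus a symmetrization argument showing a least-area competitor is itself a piece of a catenoid $\CC_{a'}$, which is then ruled out for $a'<a$ using the instability of $\CC_{a'}$ ($a'<a_c$) together with the envelope/maximal-weakly-stable-domain analysis of Proposition~\ref{thm:small maximally weakly stable domain}. To salvage your approach you would still have to supply the quantitative area estimate of Lemma~\ref{lem:area compare}, at which point the calibration machinery is doing little of the work.
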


Mori \cite{Mor82} and Do Carmo and Dajczer \cite{dCD83} also
studied the helicoid in the hyperboloid model of the hyperbolic
$3$-space $\H^3$. Roughly speaking, a helicoid $\Hcal_{\abar}$ in the
upper half space model of the hyperbolic $3$-space $\H^3$ could be
obtained by rotating the (upper) semi unit circle with center
the origin along the $t$-axis about angle $\abar{}v$ and translating
it along the $t$-axis about \emph{hyperbolic distance} $v$ for all
$v\in\R$ (see $\S$\ref{subsec:helicoids-upper-half model}).

Mori \cite{Mor82} studied the stability of the helicoids
$\Hcal_{\abar}$ for $\abar\geq{}0$ in the hyperbolic $3$-space $\H^3$.
He showed that $\Hcal_{\abar}$ is globally stable
if $\abar\leq{}3\sqrt{2}/4$, and it is unstable if
$\abar\geq\sqrt{105\pi}/8$. In this paper we will prove the
following result.

\begin{theorem}\label{thm:main theoremII}
For a family of minimal helicoids $\{\Hcal_{\abar}\}_{\abar\geq{}0}$
in the hyperbolic $3$-space $\HH^3$ that is defined by
\eqref{eq:helicoid in hyperboloid model},
there exist a constant $\abar_c=\coth(a_c)\approx{}2.17968$ such that
the following statements are true:
  \begin{enumerate}
    \item $\Hcal_{\abar}$ is a globally stable minimal surface
          if $0\leq{}\abar\leq{}\abar_c$, and
    \item $\Hcal_{\abar}$ is an unstable minimal surface with
          index infinity if $\abar>\abar_c$.
  \end{enumerate}
\end{theorem}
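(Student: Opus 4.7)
The plan is to exploit the one-parameter group of screw motions that sweeps out $\Hcal_{\abar}$: separate variables along the screw direction in the Jacobi operator $L$, identify the critical Fourier mode, and reduce the global stability problem to the stability of a spherical catenoid $\CC_a$ with $a = \coth^{-1}(\abar)$, which is already controlled by Theorem~\ref{thm:BSE theorem}.

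First I would introduce coordinates $(r,v)$ on $\Hcal_{\abar}$, with $v\in\R$ the parameter of the screw motion $\phi_v$ (rotation by angle $\abar v$ combined with hyperbolic translation by signed distance $v$ along the axis geodesic) and $r\geq 0$ the arc-length along the geodesic ruling issuing from the axis. Since $\phi_v$ is an ambient isometry preserving $\Hcal_{\abar}$, the induced metric is $v$-translation invariant and takes the form $g = E(r)\,dr^2 + G(r)\,dv^2$, and $|A|^2$ depends only on $r$. Consequently the Jacobi operator $L = \Delta + |A|^2 - 2$ (using $\Ric_{\H^3}(\nu,\nu) = -2$) commutes with $\partial_v$, so Fourier decomposition in $v$ splits the index form as $Q_\lambda(\phi) = Q_0(\phi) + \lambda^2 \int \phi(r)^2 / G(r)\,dr$ on modes of the form $\phi(r)e^{i\lambda v}$. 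Because $Q_\lambda \geq Q_0$, the zero mode is the most restrictive, and $\Hcal_{\abar}$ is globally stable if and only if the radial operator $L_0$ is non-negative on $[0,\infty)$.

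The central step is an explicit identification, via a change of variable, of the radial problem for $L_0$ on $\Hcal_{\abar}$ with the Jacobi equation of the spherical catenoid $\CC_a$ for $a = \coth^{-1}(\abar)$: this matches the coefficient functions $E$, $G$, and $|A|^2$ on $\Hcal_{\abar}$ with their counterparts on $\CC_a$, and in particular sends the threshold $\abar = \abar_c$ to $a = a_c$. Theorem~\ref{thm:BSE theorem} then yields global stability of $\Hcal_{\abar}$ exactly when $a \geq a_c$, i.e.\ when $\abar \leq \abar_c$. For $\abar > \abar_c$ the unstable radial direction on $\CC_a$ pulls back to a compactly supported $\phi_0(r)$ with $Q_0(\phi_0) < 0$; since $\Hcal_{\abar}$ is non-compact in the $v$-direction and $Q$ is $v$-translation invariant, the functions $u_k(r,v) = \phi_0(r)\chi(v - v_k)$, with $\chi$ a fixed smooth bump and $\{v_k\}$ sufficiently separated, form an infinite family of pairwise $L^2$-orthogonal compactly supported test functions of strictly negative index, so the Morse index of $\Hcal_{\abar}$ is infinite.

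The main obstacle is the explicit correspondence with $\CC_a$: one must compute the induced metric, the principal curvatures, and $|A|^2$ of $\Hcal_{\abar}$ in the screw-adapted coordinates, and then find the substitution that identifies the resulting radial ODE with the Jacobi equation for $\CC_a$ used in the proof of Theorem~\ref{thm:BSE theorem}, in particular matching the boundary behavior at $r = 0$ (a smooth axial point of $\Hcal_{\abar}$) and at $r \to \infty$. A secondary technical point is that strictly $v$-invariant functions are not compactly supported on $\Hcal_{\abar}$, so passing from the radial analysis of $L_0$ to global stability of $\Hcal_{\abar}$ requires either a Fourier transform argument in $v$ or compactly supported approximations $\phi(r)\chi(v/n)$ together with a limiting argument on $Q$.
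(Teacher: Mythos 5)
Your strategy is workable for $\abar>1$, and in substance it rediscovers by hand the mechanism the paper uses: the identification you leave as the ``main obstacle'' is precisely the conjugate-surface correspondence of do Carmo--Dajczer (Theorem~\ref{thm:conjugate minimal surfaces}) combined with Lemma~\ref{lem:relation between spherical catenoids}. Concretely, in your screw coordinates the induced metric of $\Hcal_{\abar}$ is $du^{2}+(\cosh^{2}u+\abar^{2}\sinh^{2}u)\,dv^{2}$, and with $\abar=\coth a$ one checks $\cosh^{2}u+\abar^{2}\sinh^{2}u=\sinh^{2}y(a,u)/\sinh^{2}a$, so $(u,v)\mapsto(s,\theta)=(u,v/\sinh a)$ is an isometry onto the universal cover of $\CC_{a}$, and conjugate surfaces share $|A|^{2}$; hence your radial operator $L_{0}$ is indeed the rotationally invariant part of the catenoid's Jacobi operator. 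The paper argues directly at the level of the Jacobi operator (conjugate immersions have the same first fundamental form and the same $|A|^{2}$, hence the same operator), which avoids your Fourier/zero-mode reduction; your reduction is a legitimate, more analytic substitute, provided you also justify (i) that instability of $\CC_{a}$ for $a<a_{c}$ is witnessed by a \emph{radial} test function --- it is, since the variation field $\xi(a,s)$ of B\'erard--Sa Earp is radial and vanishes at $\pm z(a)$ --- and (ii) that the bump $\chi$ is taken long enough that the positive term $\int(\chi')^{2}dv\cdot\int\phi_{0}^{2}G^{-1/2}dr$ does not destroy negativity. Your infinite-index argument (disjoint screw-translates of a single negative test function) is in fact more direct than the paper's, which invokes Fischer-Colbrie's finite-index structure theorem (Theorem~\ref{prop:FC85}) together with unstable fundamental domains placed arbitrarily far along the axis.

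The genuine gap is the range $0\leq\abar\leq 1$, which statement (1) also covers: there $a=\coth^{-1}(\abar)$ does not exist (since $\coth a>1$ for all $a>0$), so your central identification with a spherical catenoid --- and with it the appeal to Theorem~\ref{thm:BSE theorem} --- breaks down, and nothing in your proposal establishes nonnegativity of $L_{0}$ on that range. The paper handles it by noting that $\Hcal_{0}$ is a totally geodesic plane, that for $0<\abar<1$ the helicoid is conjugate to a hyperbolic catenoid, and for $\abar=1$ to the parabolic catenoid, all globally stable by do Carmo--Dajczer. To complete your proof you must either import that conjugation for the hyperbolic/parabolic cases or give a direct argument for $\abar\leq 1$ (e.g.\ exhibit a positive radial solution of $L_{0}\phi=0$ on the helicoid); as written, statement (1) is only proved for $1<\abar\leq\abar_{c}$.
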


\noindent\textbf{Outline of the proofs of the theorems.}
On Theorem~\ref{thm:BSE theorem}, because of \eqref{eq:E=d0'} and
Theorem~\ref{thm:BSE-stability}, which
was proved by B{\'e}rard and Sa Earp in \cite{BSE10}, we just need to
show that the function $\varrho'(a)$ defined by (4.16) has a unique zero.
By Lemma 7.1 and Lemma 7.2, we can show that $\varrho'(a)$ is positive
around $a=0$, decreasing on $(0,0.71555]$, and negative on
$[0.53064,\infty)$, which can imply Theorem~\ref{thm:BSE theorem}.

On Theorem~\ref{thm:main theorem}, we consider any annulus-type compact
subdomain $\Sigma$ of a spherical catenoid $\CC_a$, we will show that
if $a\geq{}a_{l}$, then the area of $\Sigma$ is less than the area of
any disks bounded by $\partial\Sigma$, and $\Sigma$ is also the least
area annulus among all annuli with the same boundary $\partial\Sigma$.

On Theorem~\ref{thm:main theoremII}, since each helicoid is conjugate
to a catenoid (in one of the three types), they have the same stability.
We know the stability of all catenoids (by Theorem~\ref{thm:BSE theorem},
and the facts that hyperbolic and parabolic catenoids are always stable),
so we know the stability of all helicoids.

\vskip 0.45cm
\noindent\textbf{Plan of the paper.}
This paper is organized as follows.
\begin{itemize}
  \item In $\S{}$\ref{sec:prelim} we introduce three types of catenoids
        in the hypebloid model $\HH^3$ of the hyperbolic $3$-space, and
        the helicoids in three models $\HH^3$, $\B^3$ and $\U^3$ of the
        hyperbolic $3$-space.
  \item In $\S${}\ref{sec:existence of spherical catenoids} we define
        the spherical catenoids in $\B^3$, then we prove
        a theorem of Gomes (Theorem \ref{thm:Gomes1987-prop3.2-a}).
  \item In $\S${}\ref{sec:stable catenoid} we introduce
        Jacobi fields on the catenoids
        (following B{\'e}rard and Sa Earp in \cite{BSE10}) and
        prove Theorem~\ref{thm:BSE theorem}.
  \item In $\S${}\ref{sec:least area catenoid} we prove
        Theorem~\ref{thm:main theorem}.
  \item In $\S${}\ref{sec:stable helicoid} we prove
        Theorem{}\ref{thm:main theoremII}.
  \item In $\S${}\ref{sec:technical lemmas}, we list and prove some technical
        lemmas which are used to prove Theorem~\ref{thm:BSE theorem} and
        Theorem~\ref{thm:main theorem}.
\end{itemize}

\section{Preliminaries}\label{sec:prelim}

\subsection{Basic theory of minimal surfaces}\label{subsec:minimal surfaces}

Let $\Sigma$ be a surface immersed
in a $3$-dimensional Riemannian manifold
$M^{3}$. We pick up a local orthonormal frame field
$\{e_1,e_2,e_3\}$ for $M^{3}$ such that, restricted to $\Sigma$,
the vectors $\{e_1,e_2\}$ are tangent to $\Sigma$ and the
vector $e_3$ is perpendicular to $\Sigma$. Let
$A=(h_{ij})_{2\times{}2}$ be the second fundamental
form of $\Sigma$, whose entries $h_{ij}$ are represented by
\begin{equation*}
   h_{ij}=\inner{\nabla_{e_i}e_{3}}{e_{j}}\ ,
   \quad{}i,j=1,2\ ,
\end{equation*}
where $\nabla$ is the covariant derivative in $M^{3}$, and
$\inner{\cdot}{\cdot}$ is the metric of $M^{3}$.
An immersed surface $\Sigma\subset{}M^{3}$ is called a
\emph{minimal surface} if its \emph{mean curvature}
$H=h_{11}+h_{22}$ is identically zero.

For any immersed minimal surface $\Sigma$
in $M^{3}$, the {\em Jacobi operator} on $\Sigma$ is
defined as follows
\begin{equation}\label{eq:Jacobi operator}
   \Lcal=\Delta_{\Sigma}+(|A|^2+\Ric(e_3))\ ,
\end{equation}
where $\Delta_{\Sigma}$ is the Lapalican on $\Sigma$,
$|A|^2=\sum_{i,j=1}^{2}h_{ij}^2$ is the square of the
the length of the second fundamental form on $\Sigma$ and
$\Ric(e_3)$ is the Ricci curvature of $M^{3}$
in the direction $e_3$.

Suppose that $\Sigma$ is a complete minimal surface immersed in
a complete Riemannian $3$-manifold $M^{3}$.
For any compact connected subdomain $\Omega$ of $\Sigma$,
its first eigenvalue is defined by
\begin{equation}\label{eq:1st eigenvalue of Omega}
   \lambda_{1}(\Omega)=\inf\left\{-\int_{\Omega}f\Lcal{}f
   \ \left|\ f\in{}C_{0}^\infty(\Omega)\ \text{and}\
   \int_{\Omega}f^2=1\right.\right\}\ .
\end{equation}
We say that $\Omega$ is \emph{stable} if $\lambda_{1}(\Omega)>0$,
\emph{unstable} if $\lambda_{1}(\Omega)<0$ and
\emph{maximally weakly stable} if $\lambda_{1}(\Omega)=0$.

\begin{lemma}\label{lem:monotonicity of eigenvalue}
Suppose that $\Omega_1$ and $\Omega_2$ are connected
subdomains of $\Sigma$ with $\Omega_1\subset\Omega_2$, then
\begin{equation*}
  \lambda_{1}(\Omega_1)\geq\lambda_{1}(\Omega_2)\ .
\end{equation*}
If $\Omega_2\setminus\overline{\Omega}_1\ne\emptyset$, then
\begin{equation*}
  \lambda_{1}(\Omega_1)>\lambda_{1}(\Omega_2)\ .
\end{equation*}
\end{lemma}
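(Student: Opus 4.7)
The plan is to establish both parts by reducing to the variational characterization \eqref{eq:1st eigenvalue of Omega}. The non-strict inequality follows from extending admissible test functions by zero, while the strict version comes from combining this with elliptic regularity and a strong maximum principle (Harnack's inequality) applied to the Jacobi operator $\Lcal$.

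\emph{Weak inequality.} Given any $f\in C_0^\infty(\Omega_1)$, the support $\mathrm{supp}(f)$ is a compact subset of $\Omega_1\subset\Omega_2$. The extension $\tilde f$, defined by $\tilde f=f$ on $\Omega_1$ and $\tilde f=0$ on $\Omega_2\setminus\mathrm{supp}(f)$, matches smoothly across $\partial\,\mathrm{supp}(f)$ because $f$ and all its derivatives vanish there; hence $\tilde f\in C_0^\infty(\Omega_2)$. Since all integrands vanish outside $\mathrm{supp}(f)$,
\[
-\int_{\Omega_2}\tilde f\,\Lcal\tilde f=-\int_{\Omega_1}f\,\Lcal f,\qquad \int_{\Omega_2}\tilde f^{2}=\int_{\Omega_1}f^{2},
\]
so every Rayleigh quotient appearing in the definition of $\lambda_1(\Omega_1)$ also appears in the definition of $\lambda_1(\Omega_2)$. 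Taking the infimum yields $\lambda_1(\Omega_2)\leq\lambda_1(\Omega_1)$.

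\emph{Strict inequality.} Assume $\Omega_2\setminus\overline{\Omega}_1\neq\emptyset$ and, for contradiction, $\lambda_1(\Omega_1)=\lambda_1(\Omega_2)=\lambda$. Let $\varphi_1$ be the smooth positive first Dirichlet eigenfunction of $\Lcal$ on the compact domain $\Omega_1$, satisfying $\varphi_1|_{\partial\Omega_1}=0$, $\varphi_1>0$ on $\Omega_1$, and $-\Lcal\varphi_1=\lambda\varphi_1$. Extend by zero to $\tilde\varphi_1$ on $\Omega_2$. Then $\tilde\varphi_1\in H^1_0(\Omega_2)$ and its Rayleigh quotient equals $\lambda=\lambda_1(\Omega_2)$, so it is a variational minimizer on $\Omega_2$, hence a weak solution of $\Lcal\tilde\varphi_1+\lambda\tilde\varphi_1=0$ on $\Omega_2$. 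By interior elliptic regularity $\tilde\varphi_1\in C^\infty(\Omega_2)$; it is nonnegative everywhere and vanishes on the nonempty open set $\Omega_2\setminus\overline{\Omega}_1$. Harnack's inequality (equivalently, the strong minimum principle for nonnegative solutions) applied to this second-order linear elliptic equation on the connected domain $\Omega_2$ then forces $\tilde\varphi_1\equiv 0$, contradicting $\tilde\varphi_1=\varphi_1>0$ on $\Omega_1$.

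The main obstacle is justifying that the pointwise extension $\tilde\varphi_1$ is a \emph{smooth} eigenfunction on all of $\Omega_2$: a priori it is only an $H^1_0$ function with a possible jump in the normal derivative across $\partial\Omega_1$. The decisive observation is that $\tilde\varphi_1$ is itself a minimizer of the Rayleigh quotient on $\Omega_2$, so it is a weak solution of the elliptic eigenvalue equation there; interior elliptic regularity then removes the apparent singularity across $\partial\Omega_1$ and produces a classical smooth solution, which is exactly what enables the Harnack-type contradiction to close the argument.
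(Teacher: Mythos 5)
Your proof is correct, and it is the standard argument for domain monotonicity of the first Dirichlet eigenvalue: extension by zero of test functions for the weak inequality, and, for the strict one, extending the first eigenfunction of $\Omega_1$ by zero, observing it would be a minimizer hence a smooth eigenfunction on $\Omega_2$, and invoking the strong maximum principle to contradict its vanishing on the open set $\Omega_2\setminus\overline{\Omega}_1$. The paper states this lemma without proof as a known fact, so there is nothing to compare against beyond noting that your argument is exactly the one implicitly relied upon; the only point worth flagging is that the strict case uses the existence of a positive first eigenfunction on $\Omega_1$, which is guaranteed in the paper's setting since $\lambda_1$ is there defined for compact subdomains.
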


\begin{remark}If $\Omega\subset\Sigma$ is maximally weakly
stable, then for any compact connected subdomains
$\Omega_1,\Omega_2\subset\Sigma$ satisfying
$\Omega_1\subsetneq\Omega\subsetneq\Omega_2$, we have that
$\Omega_1$ is stable whereas $\Omega_2$ is unstable.
\end{remark}

Let $\Omega_1\subset\Omega_2\subset\cdots\subset\Omega_n\subset\cdots$
be an exhaustion of $\Sigma$, then the first eigenvalue of
$\Sigma$ is defined by
\begin{equation}\label{eq:1st eigenvalue of Sigma}
   \lambda_{1}(\Sigma)=\lim_{n\to\infty}\lambda_{1}(\Omega_n)\ .
\end{equation}
This definition is independent of the choice of the exhaustion.
We say that $\Sigma$ is \emph{globally stable} or
\emph{stable} if $\lambda_{1}(\Sigma)>0$ and
\emph{unstable} if $\lambda_{1}(\Sigma)<0$.

The following theorem was proved by Fischer-Colbrie and Schoen in
\cite[Theorem 1]{FCS80} (see also \cite[Proposition 1.39]{CM11}).

\begin{theorem}[Fischer-Colbrie and Schoen]\label{thm:FCS80}
Let $\Sigma$ be a complete two-sided minimal surface in a
Riemannian $3$-manifold $M^{3}$, then $\Sigma$ is stable
if and only if there exists a positive function
$\phi:\Sigma\to\R$ such that $\Lcal{}\phi=0$.
\end{theorem}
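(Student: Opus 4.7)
The plan is to prove the two directions separately, following the standard Fischer-Colbrie-Schoen argument. For the easier ``if'' direction, assume there is a positive $\phi$ on $\Sigma$ with $\Lcal\phi = 0$. For any test function $f \in C_0^\infty(\Omega)$ on a compact subdomain $\Omega$, I would substitute $f = \phi g$, where $g = f/\phi$ is smooth and compactly supported. Expanding $\Lcal(\phi g) = \phi\Delta_\Sigma g + 2\inner{\nabla\phi}{\nabla g} + g\Lcal\phi$ and using $\Lcal\phi = 0$, a standard integration by parts makes the cross term vanish and yields
\begin{equation*}
-\int_\Omega f\Lcal f \;=\; \int_\Omega \phi^2|\nabla g|^2 \;\geq\; 0,
\end{equation*}
showing $\lambda_1(\Omega) \geq 0$ for every compact subdomain. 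Passing to the limit along an exhaustion then gives $\lambda_1(\Sigma) \geq 0$.

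For the harder ``only if'' direction, assume $\Sigma$ is stable and fix an exhaustion $\Omega_1 \subsetneq \Omega_2 \subsetneq \cdots$ of $\Sigma$ by precompact smooth subdomains together with a basepoint $p_0 \in \Omega_1$. Monotonicity (Lemma~\ref{lem:monotonicity of eigenvalue}) forces $\lambda_1(\Omega_n) > 0$, so the Dirichlet problem $\Lcal\phi_n = 0$ on $\Omega_n$, $\phi_n|_{\partial\Omega_n} \equiv 1$, admits a unique smooth solution. Positivity of $\phi_n$ is established by a variational contradiction: on any connected component $U$ of $\{\phi_n < 0\}$, the restriction of $\phi_n$ (extended by zero) would be an admissible competitor in \eqref{eq:1st eigenvalue of Omega} with $\int_U \phi_n\Lcal\phi_n = 0$, contradicting $\lambda_1(U) \geq \lambda_1(\Omega_n) > 0$. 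Normalizing $u_n := \phi_n/\phi_n(p_0)$ then produces positive solutions of $\Lcal u_n = 0$ on each $\Omega_n$ with $u_n(p_0) = 1$.

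The final step is a compactness argument. On any compact $K \subset \Sigma$ eventually contained in the $\Omega_n$, the Harnack inequality for the linear elliptic operator $\Lcal$ gives uniform two-sided positive bounds on $u_n|_K$; interior Schauder estimates then extract a $C^2_{\mathrm{loc}}$-convergent subsequence whose limit $\phi$ is a positive smooth solution of $\Lcal\phi = 0$ on all of $\Sigma$. The principal obstacle is the pointwise positivity of the $\phi_n$: since the zeroth-order coefficient $|A|^2 + \Ric(e_3)$ of $\Lcal$ need not have a definite sign, the classical maximum principle is unavailable and one must exploit the variational meaning of $\lambda_1$ instead. A secondary, notational subtlety concerns the paper's convention that ``stable'' means $\lambda_1(\Sigma) > 0$, whereas the characterization naturally captures the weaker inequality $\lambda_1(\Sigma) \geq 0$; the statement should be read under this weaker convention (which is how Fischer-Colbrie and Schoen originally phrase it).
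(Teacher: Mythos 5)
The paper does not actually prove this theorem; it is quoted directly from Fischer-Colbrie and Schoen \cite{FCS80} (see also Colding--Minicozzi), and your sketch is essentially the standard argument from that source: the substitution $f=\phi g$ with integration by parts for the ``if'' direction, and the exhaustion/Dirichlet-problem/Harnack compactness scheme for the ``only if'' direction, all of which is correct. Two minor refinements: the variational argument on components of $\{\phi_n<0\}$ only gives $\phi_n\geq 0$, and you still need the strong maximum principle (equivalently the Harnack inequality you invoke later, which needs no sign condition on the zeroth-order coefficient $|A|^2+\Ric(e_3)$) to upgrade this to strict positivity before normalizing at $p_0$; and your closing remark is well taken --- the equivalence naturally characterizes $\lambda_{1}(\Omega)\geq 0$ for every compact subdomain, i.e.\ stability in the sense of Fischer-Colbrie and Schoen, which is how the theorem must be read given that $\S$2.1 of the paper adopts the strict convention $\lambda_{1}(\Sigma)>0$.
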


The \emph{Morse index} of a compact connected subdomain
$\Omega$ of $\Sigma$ is the number of negative eigenvalues of the
Jacobi operator $\Lcal$ (counting with multiplicity) acting on the
space of smooth sections of the normal bundle that vanishes on
$\partial\Omega$. The \emph{Morse index} of $\Sigma$ is the
supremum of the Morse indices of compact subdomains of $\Sigma$.

The following proposition of Fischer-Colbrie can be applied to
show that some unstable minimal surface has infinite Morse index.

\begin{theorem}[{\cite[Proposition 1]{Fis85}}]\label{prop:FC85}
Let $\Sigma$ be a complete two-sided minimal surface in a
Riemannian $3$-manifold $M^{3}$. If $\Sigma$ has finite Morse index then
there is a compact set $K$ in $\Sigma$ so that
$\Sigma\setminus{}K$ is stable and there exists a positive function $\phi$ on
$\Sigma$ so that $\Lcal{}\phi=0$ on $\Sigma\setminus{}K$.
\end{theorem}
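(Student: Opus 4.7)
The plan is to prove the theorem in two stages. First, I would use the finiteness of the Morse index to locate a compact set $K \subset \Sigma$ such that $\Sigma \setminus K$ is stable. Second, I would run the exhaustion argument underlying Theorem~\ref{thm:FCS80} on this cofinite open subset of $\Sigma$ to extract a positive function solving $\Lcal \phi = 0$ on $\Sigma\setminus K$, then extend it positively across $K$ to obtain a positive function on all of $\Sigma$.

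For the first stage I would argue by contradiction. Suppose no such $K$ exists, and fix an exhaustion $K_1 \subset K_2 \subset \cdots$ of $\Sigma$ by compact sets. For each $n$ there would then be a relatively compact subdomain $\Omega_n \subset \Sigma \setminus K_n$ with $\lambda_1(\Omega_n) < 0$; after passing to a subsequence I may take the $\Omega_n$ pairwise disjoint. Let $\phi_n \in H_0^1(\Omega_n)$ be a first Dirichlet eigenfunction of $-\Lcal$, normalized by $\int_{\Omega_n} \phi_n^2 = 1$, and extend each by zero to $\Sigma$. For any nonzero finite combination $f = \sum_{i=1}^N c_i \phi_i$, the disjointness of supports yields
\begin{equation*}
  -\int_\Sigma f\,\Lcal f \;=\; \sum_{i=1}^N c_i^2\,\lambda_1(\Omega_i) \;<\; 0.
\end{equation*}
Approximating each $\phi_i$ in $H_0^1$ by elements of $C_0^\infty(\Omega_i)$ produces an $N$-dimensional subspace of $C_0^\infty(\Sigma)$ on which the second variation form $-\int f\,\Lcal f$ is negative definite, and since $N$ is arbitrary the Morse index of $\Sigma$ is infinite, contradicting the hypothesis.

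For the second stage I would fix a base point $p_0 \in \Sigma \setminus K$ and a smooth exhaustion $\Omega_1 \Subset \Omega_2 \Subset \cdots$ of $\Sigma \setminus K$ with $p_0 \in \Omega_1$. Stability, together with strict monotonicity of the first eigenvalue, forces $\lambda_1(\Omega_n) > 0$ for every $n$, so the Dirichlet problem
\begin{equation*}
  \Lcal u_n = 0 \text{ in } \Omega_n,\qquad u_n = 1 \text{ on } \partial \Omega_n,
\end{equation*}
is uniquely solvable, and $u_n > 0$ in the interior by the maximum principle for $-\Lcal$ (valid whenever the principal eigenvalue is strictly positive). Setting $v_n = u_n/u_n(p_0)$ and invoking the Harnack inequality on any fixed compact $K' \subset \Sigma \setminus K$ yields uniform two-sided bounds on $v_n$ over $K'$. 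Standard interior Schauder estimates then let me extract a subsequence of $v_n$ converging in $C^\infty_{\mathrm{loc}}(\Sigma \setminus K)$ to a positive function $\phi$ with $\Lcal \phi = 0$, which I extend by any positive smooth function across $K$ to obtain the required $\phi$ on $\Sigma$.

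The main obstacle is the uniform Harnack control in the second stage: one must ensure that the normalized solutions $v_n$ neither vanish nor blow up on fixed compact subsets of $\Sigma \setminus K$ as the outer boundary $\partial \Omega_n$ recedes. This is exactly where the strict positivity of $\lambda_1$ on every $\Omega_n$ is essential, since it promotes $-\Lcal$ to a coercive operator satisfying the maximum principle and makes the standard Schrödinger-operator machinery (Harnack inequality, interior Schauder estimates, Perron-type solvability of the Dirichlet problem) available uniformly in $n$.
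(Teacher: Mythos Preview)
The paper does not supply a proof of this statement; it is quoted verbatim from Fischer-Colbrie \cite{Fis85} and used as a black box in the proof of Theorem~\ref{thm:main theoremII}. Your two-stage outline is essentially Fischer-Colbrie's original argument and is correct.

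Two small points worth tightening. In Stage~1, to make the $\Omega_n$ pairwise disjoint you should build the exhaustion inductively, taking $K_{n+1}\supset K_n\cup\overline{\Omega_n}$ at each step rather than fixing the exhaustion in advance and passing to a subsequence. In Stage~2, stability of $\Sigma\setminus K$ (in the sense that the second-variation form is nonnegative on $C_0^\infty(\Sigma\setminus K)$) a priori yields only $\lambda_1(\Omega_n)\geq 0$; the strict inequality $\lambda_1(\Omega_n)>0$ that you need for solvability of the Dirichlet problem and the maximum principle follows from Lemma~\ref{lem:monotonicity of eigenvalue}, since each $\Omega_n$ is compactly contained in $\Sigma\setminus K$ and can be strictly enlarged there.
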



Suppose that $\Sigma$ is a complete minimal surface immersed in
a complete Riemannian $3$-manifold $M^{3}$. For any compact subdomain
$\Omega$ of $\Sigma$, it is said to be \emph{least area} if its
area is smaller than that of any other surface in the same
homotopic class with the same boundary as $\partial\Omega$. We
say that $\Sigma$ is a \emph{least area minimal surface} if
any compact subdomain of $\Sigma$ is least area.

Let $S$ be a compact annulus-type minimal surface immersed in a
Riemannian $3$-manifold $M^{3}$. Suppose that the boundary of $S$
is the union of two simple closed curves $C_1,C_2$ which bound two
least area minimal disks $D_1,D_2$ respectively. The annulus $S$ is
called a \emph{least area minimal surface in the sense of Meeks and Yau}
in $M^{3}$ if
\begin{enumerate}
  \item $\Area(S)\leq\Area(S')$ for each annulus $S'$ with
        $\partial{}S'=\partial{}S$, and
  \item $\Area(S)<\Area(D_1)+\Area(D_2)$,
\end{enumerate}
where $\Area(\cdot)$ denotes the area of the surfaces in $M^{3}$
(see \cite[p.~412]{MY1982(t)}). A complete annulus-type minimal
surface $\Sigma$ immersed in $M^{3}$ is called a
\emph{least area minimal surface in the sense of Meeks and Yau}
if any annulus-type compact subdomain of $\Sigma$, which is
homotopically equivalent to $\Sigma$,
is a least area minimal surface in the sense of Meeks and Yau.

\subsection{Models of the hyperbolic $3$-space}\label{subsec:hyperbolic space models}

In this paper, we work in three models of the hyperbolic $3$ space:
the hyperboloid model $\HH^3$, the Poincar{\'e} ball model $\B^3$ and the upper
half space model $\U^3$ (see \cite[$\S$A.1]{BP92}).

\subsubsection{Hyperboloid model $\HH^3$}

We consider the Lorentzian $4$-space
$\Lbb^{4}$, i.e. a vector space $\R^{4}$ with the Lorentzian
inner product
\begin{equation}\label{eq:Lorentzian inner product}
   \inner{x}{y}=-x_{1}y_{1}+x_{2}y_{2}+x_{3}y_{3}+x_{4}y_{4}
\end{equation}
where $x,y\in\R^{4}$. Its isometry group is $\SO^{+}(1,3)$.
The hyperbolic space $\HH^{3}$ can be
considered as the unit sphere of $\Lbb^{4}$:
\begin{equation}
   \HH^{3}=\{x\in\Lbb^{4}\ |\ \inner{x}{x}=-1,\,x_{1}\geq{}1\}\ .
\end{equation}

\subsubsection{Poincar{\'e} ball model $\B^3$}

The The Poincar{\'e} ball model $\B^{3}$ of the hyperbolic $3$-space is
the open unit ball
\begin{equation*}
   \B^{3}=\{(u,v,w)\in\R^{3}\ | \ u^{2}+v^{2}+w^{2}<{}1\},
\end{equation*}
equipped with the hyperbolic metric
\begin{equation*}
   ds^{2}=\frac{4(du^{2}+dv^{2}+dw^{2})}{(1-r^{2})^{2}}\ ,
\end{equation*}
where $r=\sqrt{u^{2}+v^{2}+w^{2}}$. The orientation preserving
isometry group of $\B^3$ is denoted by $\Mob(\B^3)$, which
consists of M\"obius transformations that preserve the unit
ball $\B^{3}$ (see \cite[Theorem 1.7]{MT98}).
The hyperbolic space $\B^{3}$ has a natural compactification:
$\overline{\B^{3}}=\B^{3}\cup{}S_{\infty}^{2}$,
where $S_{\infty}^{2}\cong\C\cup\{\infty\}$ is called the
\emph{Riemann sphere}.

\subsubsection{Upper-half space model $\U^3$}

Consider the upper half space model of hyperbolic $3$-space,
i.e., a three dimensional space
\begin{equation*}
   \U^{3}=\{z+tj\ |\ z\in\C\ \text{and}\ t>0\}\ ,
\end{equation*}
which is equipped with the (hyperbolic) metric
\begin{equation*}
   ds^{2}=\frac{|dz|^{2}+dt^{2}}{t^{2}}\ ,
\end{equation*}
where $z=x+iy$ for $x,y\in\R$. The orientation preserving
isometry group of $\U^3$ is denoted by $\PSL_{2}(\C)$, which
consists of linear fractional transformations.


\subsection{Three types of catenoids in the hyperboloid model $\HH^3$}
\label{subsec:catenoids in hyperboloid model}

We follow do Carmo and Dajczer \cite{dCD83} to describe three types
of catenoids in $\HH^3$. For any subspace $P$ of the Lorentzian
$4$-space $\Lbb^4$, let $\O(P)$ be the subgroup of $\SO^{+}(1,3)$
which leaves $P$ pointwise fixed.

\begin{definition}\label{def:rotation surface}
Let $\{e_1,\ldots,e_4\}$ be an orthonormal basis of
$\Lbb^4$ (it may not be the standard orthonormal basis). Suppose
that $P^2=\Span\{e_3,e_4\}$, $P^3=\Span\{e_1,e_3,e_4\}$ and
$P^3\cap\HH^3\ne\emptyset$. Let $C$ be a regular curve in
$P^3\cap\HH^3=\HH^2$ that does not meet $P^2$.
The orbit of $C$ under the action of $\O(P^2)$ is called a
\emph{rotation surface} \emph{generated by $C$ around $P^2$}.
\end{definition}

If a rotation surface in Definition \ref{def:rotation surface}
has mean curvature zero, then it's called a \emph{catenoid} in
$\HH^3$. There are three types of catenoids
in $\HH^3$: spherical catenoids, hyperbolic catenoids, and
parabolic catenoids.

\subsubsection{Spherical catenoids}
\label{subsub:spherical catenoild-lorentz}
The spherical catenoid is obtained as follows.
Let $\{e_1,\ldots,e_4\}$ be an orthonormal basis of $\Lbb^4$ such
that $\inner{e_4}{e_4}=-1$. Suppose that $P^2$, $P^3$ and $C$ are the
same as those defined in Definition \ref{def:rotation surface}.
For any point $x\in\Lbb^4$, write $x=\sum{}x_{k}e_{k}$. If
the curve $C$ is parametrized by
\begin{equation}\label{eq:spherical catenoid-x1(s)}
   x_{1}(s)=\sqrt{\atilde\cosh(2s)-1/2}\ ,
   \quad \atilde>1/2\,
\end{equation}
and
\begin{equation}\label{eq:spherical catenoid-x3(s)-x4(s)}
   x_{3}(s)=\sqrt{x_{1}^{2}(s)+1}\,\sinh(\phi(s))\ ,\
   x_{4}(s)=\sqrt{x_{1}^{2}(s)+1}\,\cosh(\phi(s))\ ,
\end{equation}
where
\begin{equation}
   \phi(s)=\int_{0}^{s}\frac{\sqrt{\atilde{}^2-1/4}}
   {(\atilde\cosh(2\sigma)+1/2)
   \sqrt{\atilde\cosh(2\sigma)-1/2}}\,d\sigma\ ,
\end{equation}
then the rotation surface, denoted by $\Cscr_{1}(\atilde)$,
is a complete minimal surface in $\HH^3$,
which is called a \emph{spherical catenoid}.

\subsubsection{Hyperbolic catenoids}
\label{subsub:hyperbolic catenoild-lorentz}
The hyperbolic catenoid is obtained as follows.
Let $\{e_1,\ldots,e_4\}$ be an orthonormal basis of $\Lbb^4$
such that $\inner{e_1}{e_1}=-1$. Suppose that $P^2$, $P^3$ and $C$ are the
same as those defined in Definition \ref{def:rotation surface}.
For any point $x\in\Lbb^4$, write $x=\sum{}x_{k}e_{k}$. If
the curve $C$ is parametrized by
\begin{equation}\label{eq:hyperbolic catenoid-x1(s)}
   x_{1}(s)=\sqrt{\atilde\cosh(2s)+1/2}\ ,
   \quad \atilde>1/2\,
\end{equation}
and
\begin{equation}\label{eq:hyperbolic catenoid-x3(s)-x4(s)}
   x_{3}(s)=\sqrt{x_{1}^{2}(s)-1}\,\sin(\phi(s))\ ,\
   x_{4}(s)=\sqrt{x_{1}^{2}(s)-1}\,\cos(\phi(s))\ ,
\end{equation}
where
\begin{equation}
   \phi(s)=\int_{0}^{s}\frac{\sqrt{\atilde{}^2-1/4}}
   {(\atilde\cosh(2\sigma)-1/2)
   \sqrt{\atilde\cosh(2\sigma)+1/2}}\,d\sigma\ ,
\end{equation}
then the rotation surface, denoted by $\Cscr_{-1}(\atilde)$,
is a complete minimal surface in $\HH^3$,
which is called a \emph{hyperbolic catenoid}.

\subsubsection{Parabolic catenoids}
\label{subsub:parabolic catenoild-lorentz}
The parabolic catenoid is obtained as follows.
Let $\{e_1,\ldots,e_4\}$ be a pseudo-orthonormal basis of $\Lbb^4$ such
that $\inner{e_1}{e_1}=\inner{e_3}{e_3}=0$, $\inner{e_1}{e_3}=-1$ and
$\inner{e_j}{e_k}=\delta_{jk}$ for $j=2,4$ and $k=1,2,3,4$
(see \cite[P. 689]{dCD83}).
Suppose that $P^2$, $P^3$ and $C$ are the
same as those defined in Definition \ref{def:rotation surface}.
For any point $x\in\Lbb^4$, write $x=\sum{}x_{k}e_{k}$. If
the curve $C$ is parametrized by
\begin{equation}\label{eq:parabolic catenoid-x1(s)}
   x_{1}(s)=\sqrt{\cosh(2s)}\ ,
\end{equation}
and
\begin{equation}\label{eq:parabolic catenoid-x3(s)-x4(s)}
   x_{4}(s)=x_{1}(s)\int_{0}^{s}\frac{d\sigma}
   {\sqrt{\cosh^3(2\sigma)}}\ ,\
   x_{3}(s)=-\frac{1+x_{4}^2(s)}{2x_{1}(s)}\ ,
\end{equation}
then the rotation surface, denoted by $\Cscr_{0}$,
is a complete minimal surface in $\HH^3$, which is called a
\emph{parabolic catenoid}.
Up to isometries, the parabolic catenoid $\Cscr_{0}$ is unique
(see \cite[Theorem (3.14)]{dCD83}).

\subsection{Helicoids in the three models of the hyperbolic $3$-space}
\label{sec:helicoid-II}

In this subsection, we introduce the equations of helicoids in the
hyperbloid model of the hyperbolic $3$-space at first.
In order to visualize the helicoids in the hyperbolic $3$-space,
we will study the parametric equations of helicoids in the
Poincar{\'e} ball model and upper half space model of
the hyperbolic $3$-space.
To derive the formulas in \eqref{eq:helicoid--ball model}
and \eqref{eq:helicoid--upper half},
we apply the isometries from the hyperboloid model to the
Pioncar{\'e} ball model and the upper half space model
(see \cite[$\S$A.1]{BP92}).

\subsubsection{Helicoids in $\HH^3$}

The helicoid $\Hcal_{\abar}$ in the hyperbloid model $\HH^3$ of
the hyperbolic $3$-space is the surface parametrized by the
$(u,v)$-plane in the following way (see \cite[p.699]{dCD83}):
\begin{equation}\label{eq:helicoid in hyperboloid model}
   \Hcal_{\abar}=\left\{x\in\HH^{3}\ \left|
   \begin{aligned}
      &x_{1}=\cosh{}u\cosh{}v, &&x_{2}=\cosh{}u\sinh{}v\\
      &x_{3}=\sinh{}u\cos(\abar{}v), &&x_{4}=\sinh{}u\sin(\abar{}v)
   \end{aligned}
   \right.\right\}\ ,
\end{equation}
where $-\infty<u,v<\infty$.
For any constant $\abar\geq{}0$, the helicoid $\Hcal_{\abar}\subset\HH^3$ is an
embedded minimal surface (see \cite{Mor82}). In the hyperboloid model
$\HH^3$, the axis of the helicoid $\Hcal_{\abar}$ is given by
\begin{equation*}
   (\cosh{}v,\sinh{}v,0,0)\ ,
   \quad{}-\infty<v<\infty\ ,
\end{equation*}
which is the intersection of the $x_{1}x_{2}$-plane and $\HH^3$.

\subsubsection{Helicoids in $\B^3$}

The helicoid $\Hcal_{\abar}$ in the Poincar{\'e} ball model $\B^3$
of the hyperbolic $3$-space is given by
(see Figure~\ref{fig:helicoid in B3})
\begin{equation}\label{eq:helicoid--ball model}
  \Hcal_{\abar}=\left\{(x,y,z)\in\B^{3}\ \left|\
     \begin{aligned}
        x&=\frac{\sinh{}u{}\cos(\abar{}v)}{1+\cosh{}u{}\cosh{}v}\ , \\
        y&=\frac{\sinh{}u{}\sin(\abar{}v)}{1+\cosh{}u{}\cosh{}v}\ , \\
        z&=\frac{\cosh{}u{}\sinh{}v}{1+\cosh{}u{}\cosh{}v}
     \end{aligned}\right.\right\}\ ,
\end{equation}
where $-\infty<u,v<\infty$.

\begin{figure}[htbp]
  \centering
  \includegraphics[scale=0.15]{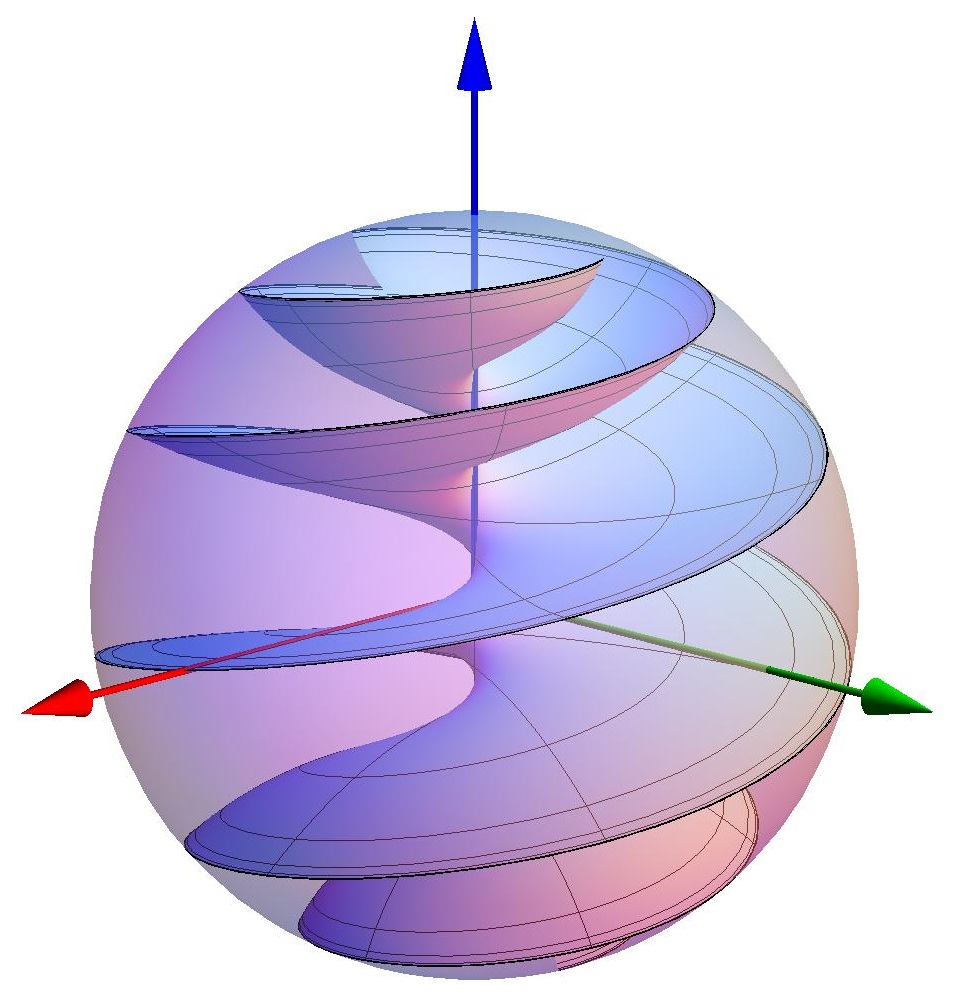}
  \caption{The helicoid $\Hcal_{\abar}$ with $\abar=5$ in the Poincar{\'e} ball
  model of hyperbolic space. The curves perpendicular
  to the spirals are geodesics in $\B^3$.}\label{fig:helicoid in B3}
\end{figure}

\subsubsection{Helicoids in $\U^3$}
\label{subsec:helicoids-upper-half model}
The helicoid $\Hcal_{\abar}$ in the upper half space model $\U^3$
of the hyperbolic $3$-space is given by
(see Figure \ref{fig:helicoid in H3})
\begin{equation}\label{eq:helicoid--upper half}
  \Hcal_{\abar}=\{(z,t)\in\U^{3}\ |\ z=e^{v+\sqrt{-1}\,\abar{}v}\tanh{}u\
     \text{and}\ t=e^{v}\sech{}u\}\ ,
\end{equation}
where $-\infty<u,v<\infty$,
and the axis of $\Hcal_{\abar}$ is the $t$-axis.

From the equation \eqref{eq:helicoid--upper half}, we can see that each
helicoid $\Hcal_{\abar}$ is invariant under the one-parameter group $G_{\abar}\subset\PSL_{2}(\C)$ consisting of loxodromic transformations
which fix the same $t$-axis , i.e.
\begin{equation*}
   G_{\abar}=\left\{z\mapsto{}\exp\left(v+\sqrt{-1}\,\abar{}v\right)z \ | \
   -\infty<v<\infty\right\}\ .
\end{equation*}
When $v=0$, from \eqref{eq:helicoid--upper half} we get a semi unit circle
in the $xt$-plane, whose center is the origin.

\begin{figure}[htbp]
  \centering
  \includegraphics[scale=0.19]{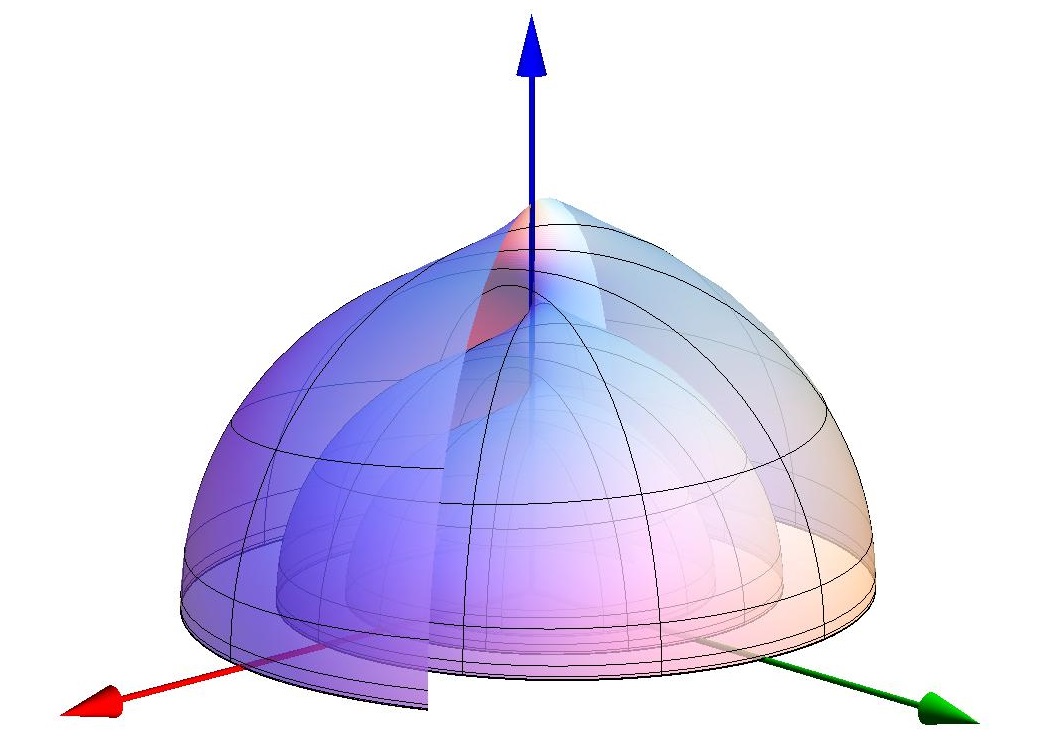}
  \caption{The helicoid $\Hcal_{\abar}$ with $\abar=5$ in the upper half
  space model of hyperbolic space $\H^3$. The curves perpendicular
  to the spirals are geodesics in $\H^3$.}\label{fig:helicoid in H3}
\end{figure}

\section{Existence and Uniqueness of Spherical Catenoids}
\label{sec:existence of spherical catenoids}

In this section, we will prove a theorem of Gomes, which plays an
important role in the paper \cite{Wang11a} for constructing barrier
surfaces.


\subsection{Spherical catenoids in $\B^3$}
\label{subsec:catnoids}

In this subsection, we follow Hsiang (see \cite{Hsi82,BCH09}) to
introduce the minimal spherical catenoids in $\B^3$.
Let $X$ be a subset of $\B^{3}$, we define
the {\em asymptotic boundary} of $X$ by
\begin{equation}
   \partial_{\infty}X=\overline{X}
   \cap{}S_{\infty}^{2}\ ,
\end{equation}
where $\overline{X}$ is the closure of $X$ in
$\overline{\B}{}^{3}$.

Using the above notation, we have
$\partial_\infty\B^3=S_{\infty}^2$. If $P$ is a geodesic plane
in $\B^{3}$, then $P$ is perpendicular to $S_{\infty}^{2}$ and
$C\stackrel{\text{def}}{=}\partial_{\infty}P$ is
an Euclidean circle on $S_{\infty}^{2}$. We also say that
$P$ is {\em asymptotic to} $C$.

Suppose that $G\cong\SO(3)$ is a subgroup of $\Mob(\B^{3})$ that leaves
a geodesic $\gamma\subset\B^{3}$ pointwise fixed. We call $G$ the
\emph{spherical group} of $\B^{3}$ and $\gamma$ the
\emph{rotation axis} of $G$. A surface in $\B^{3}$ which is invariant
under $G$ is called a \emph{spherical surface} or a
\emph{surface of revolution}.
For two circles $C_{1}$
and $C_{2}$ in $\B^{3}$, if there is a geodesic $\gamma$,
such that each of $C_{1}$ and $C_{2}$ is invariant under the
group of rotations that fixes $\gamma$ pointwise, then $C_{1}$
and $C_{2}$ are said to be \emph{coaxial}, and $\gamma$ is
called the {\em rotation axis} of $C_{1}$ and $C_{2}$.

\subsubsection{The warped product metric}
Suppose that $G$ is the spherical group of $\B^{3}$ along
the geodesic
\begin{equation}\label{eq:rotation axis}
   \gamma_{0}=\{(u,0,0)\in\B^3\ |\ -1<u<1\}\ ,
\end{equation}
then $\B^3/G\cong\B_{+}^2$, where
\begin{equation}\label{eq:half_space}
   \B_{+}^2=\{(u,v)\in\B^2\ |\ v\geq{}0\}\ .
\end{equation}
We shall equip the half space $\B_{+}^2$ with a warped product metric.

For any point $p=(u,v)\in\B_{+}^2$, there is a unique
geodesic segment $\gamma'$ passing through $p$ that is
perpendicular to $\gamma_{0}$ at $q$.
Let $x=\dist(O,q)$ and $y=\dist(p,q)=\dist(p,\gamma_{0})$
(see Figure~\ref{fig:intrinsic metric}), where
$\dist(\cdot,\cdot)$ denotes the hyperbolic distance, then by
\cite[Theorem 7.11.2]{Bea95}, we have
\begin{equation}\label{eq:(x,y) in terms of (u,v)}
   \tanh{}x=\frac{2u}{1+(u^2+v^2)}
   \quad\text{and}\quad
   \sinh{}y=\frac{2v}{1-(u^2+v^2)}\ .
\end{equation}
Equivalently, we also have
\begin{equation}\label{eq:(u,v) in terms of (x,y)}
   u=\frac{\sinh{}x\cosh{}y}{1+\cosh{}x\cosh{}y}
   \quad\text{and}\quad
   v=\frac{\sinh{}y}{1+\cosh{}x\cosh{}y}\ .
\end{equation}

It's well known that $\B_{+}^2$ can be equipped with
the \emph{metric of warped product} in terms
of the parameters $x$ and $y$ as follows:
\begin{equation}\label{eq:warped product metric}
   ds^2=\cosh^{2}y\cdot{}dx^2+dy^2\ ,
\end{equation}
where $dx$ represents the hyperbolic metric on the geodesic
$\gamma_0$ in \eqref{eq:rotation axis}. We call the horizontal
geodesic $\{(u,0)\in\B_{+}^2\ |\ -1<u<1\}$ the $x$-axis and
the vertical geodesic $\{(0,v)\in\B_{+}^2\ |\ 0\leq{}v<1\}$
the $y$-axis. The orientations of the $x$-axis and the $y$-axis
are considered to be the same as that of the $u$-axis and the
$v$-axis respectively. Thus we also consider that the $x$-axis
and the $y$-axis are equivalent to the $u$-axis and the $v$-axis
respectively.

\begin{definition}\label{def:catenoid without parameter}
If $\CC$ is a minimal surface of revolution in $\B^3$ with
respect to the axis $\gamma_{0}$ in \eqref{eq:rotation axis},
then it is called a \emph{catenoid}
and the curve $\sigma=\CC\cap\B_{+}^{2}$ is called the
\emph{generating curve} of $\CC$ or a \emph{catenary}.
\end{definition}

\begin{figure}[htbp]
  \begin{center}
     \includegraphics[scale=1]{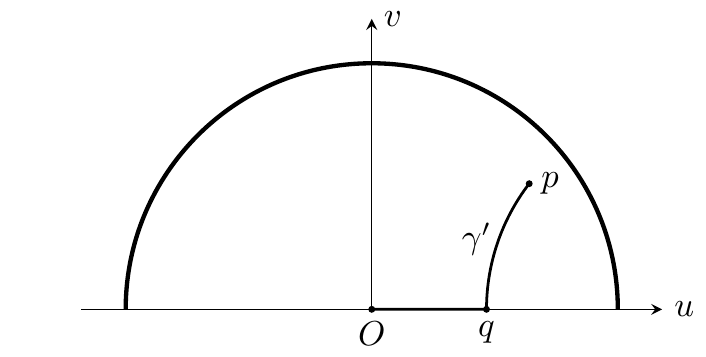}
   \end{center}
  \caption{For a point $p$ in $\B_{+}^2$ with the warped product
   metric, its coordinates $(x,y)$ are defined by $x=\dist(O,q)$
   and $y=\dist(p,q)$.}\label{fig:intrinsic metric}
\end{figure}

\subsubsection{Arc length parametrization of a catenary}
\label{sec:catenary}

Let $\sigma\subset\B_{+}^{2}$ be the generating curve of a
minimal catnoid $\CC$. Suppose that the parametric equations
of $\sigma$ are given by:
$x=x(s)$ and $y=y(s)$, where $s\in(-\infty,\infty)$ is an
arc length parameter of $\sigma$. By the argument in
\cite[pp. 486--488]{Hsi82}, the curve $\sigma$ satisfies the
following equations
\begin{equation}\label{eq:differential equations of Pi}
   \frac{2\pi\sinh{}y\cdot\cosh^{2}y}
   {\sqrt{\cosh^2{}y+(y')^2}}=
   2\pi\sinh{}y\cdot\cosh{}y\cdot\sin\theta=k\
   (\text{constant})\ ,
\end{equation}
where $y'=dy/dx$ and $\theta$ is the angle between the tangent
vector of $\sigma$ and the vector $e_y=\partial/\partial{}y$
at the point $(x(s),y(s))$ (see Figure~\ref{fig:diff eq of sigma}).

By the argument in \cite[pp.54--58]{Gom87}), up to isometry,
we assume that the curve $\sigma$ is only symmetric about the $y$-axis
(by assumption it's the same as the $v$-axis)
and intersects the $y$-axis orthogonally at $y_0=y(0)$,
and so $y'(0)=0$.
Substitute these to \eqref{eq:differential equations of Pi},
we get $k=2\pi\sinh(y_0)\cosh(y_0)$, and then we have
the following equation
\begin{equation}\label{eq:angle-alpha}
   \sin\theta=\frac{\sinh(y_0)\cosh(y_0)}{\sinh(y)\cosh(y)}
             =\frac{\sinh(2y_0)}{\sinh(2y)}\ .
\end{equation}
Now we solve for $dx/dy$ in terms of $y$
in \eqref{eq:differential equations of Pi} and integrate
$dx/dy$ from $y_0$ to $y$ for any $y\geq{}y_0$, then we have
the following equality
\begin{equation}\label{eq: catenary equation}
   x(y)=\int_{y_0}^{y}\frac{\sinh(2y_0)}{\cosh{}y}
             \frac{dy}{\sqrt{\sinh^2(2y)-\sinh^2(2y_0)}}\ .
\end{equation}
Let $y\to\infty$ in \eqref{eq: catenary equation},
we get (see Figure~\ref{fig:Gomes distance})
\begin{equation}\label{eq:Gomes distance}
   x(\infty)=\int_{y_0}^{\infty}\frac{\sinh(2y_0)}{\cosh{}y}
             \frac{dy}{\sqrt{\sinh^2(2y)-\sinh^2(2y_0)}}\ .
\end{equation}
We can see that the hyperbolic distance between two totally geodesic
planes bounded by the boundary of the catenoid $\CC$, whose generating
curve is $\sigma$, is equal to the double of $x(\infty)$. We will
see that this distance can not be too large, actually it's
around $1$ (see Theorem~\ref{thm:Gomes1987-prop3.2-a}).

\begin{figure}[htbp]
  \begin{center}
     \includegraphics[scale=0.9]{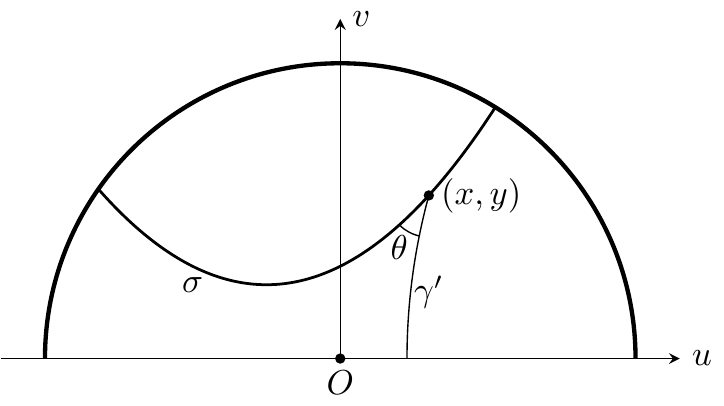}
   \end{center}
  \caption{$\theta$ is the angle between the parametrized
  curve $\sigma$ and the geodesic $\gamma'$ at the point
  $(x(s),y(s))\in\sigma\cap\gamma'$, where $\gamma'$ is
  perpendicular to the $u$-axis.}\label{fig:diff eq of sigma}
\end{figure}

Replacing the initial data $y_0$ by a parameter $a\in(0,\infty)$ in
\eqref{eq: catenary equation}, and let $\sigma_a\subset\B_{+}^2$ be the
catenary which is symmetric about the $y$-axis and whose initial
data is $a$ (actually the hyperbolic distance between $\sigma_{a}$
and the origin of $\B_{+}^2$ is equal to $a$).

In order to get the parametric equation of the catenary
$\sigma_a$ with arc length, we define a function of the variable $a$
by rewriting \eqref{eq: catenary equation}:
\begin{equation}\tag{\ref{eq: catenary equation}$'$}
                \label{eq:catenary(a,t)}
  \rho(a,t)=\int_{a}^{t}\frac{\sinh(2a)}{\cosh\tau}
  \frac{d\tau}{\sqrt{\sinh^2(2\tau)-\sinh^2(2a)}}\ ,
  \quad{}t\geq{}a\ .
\end{equation}
Recall that the semi disk $\B_{+}^2$ is equipped with the metric
\eqref{eq:warped product metric}, it's easy to get
the arc length of the catenary $\sigma_a$:
\begin{equation}\label{eq:arc length of catenary}
  s(a,t)
   =\int_{a}^{t}\frac{\sinh(2\tau)}
   {\sqrt{\cosh^2(2\tau)-\cosh^2(2a)}}\,d\tau
   =\frac{1}{2}\,\cosh^{-1}\left(\frac{\cosh(2t)}{\cosh(2a)}\right)\ ,
\end{equation}
where $t\geq{}a$. For any $s\in(-\infty,\infty)$, let
\begin{align}\label{eq:x(a,s)}
  x(a,s)&=\sqrt{2}\,\sinh(2a)
          \int_{0}^{s}\frac{\sqrt{\cosh(2a)\cosh(2t)-1}}
          {\cosh^{2}(2a)\cosh^{2}(2t)-1}\,dt\ ,\\
          \label{eq:y(a,s)}
  y(a,s)&=a+\int_{0}^{s}\frac{\cosh(2a)\sinh(2t)}
           {\sqrt{\cosh^{2}(2a)\cosh^{2}(2t)-1}}\,dt\\
           \label{eq:y(a,s)-II}
        &=\frac{1}{2}\,\cosh^{-1}(\cosh^{2}(2a)\cosh^{2}(2s))\ .
\end{align}
It's easy to verify that
\begin{equation}\label{eq:x(a,s) and rho(a,t)}
   x(a,s)=\rho(a,y(a,s))
\end{equation}
for $s\geq{}0$ and that the map
\begin{equation}\label{eq:parametric equation of catenary}
   s\mapsto(x(a,s),y(a,s))
\end{equation}
is arc-length parametrization of the catenary $\sigma_a$
for $s\in(-\infty,\infty)$, where $\rho(\cdot,\cdot)$ is
given by \eqref{eq:catenary(a,t)}
(see \cite[Proposition 4.2]{BSE10}).

\begin{figure}[htbp]
  \begin{center}
     \includegraphics[scale=0.9]{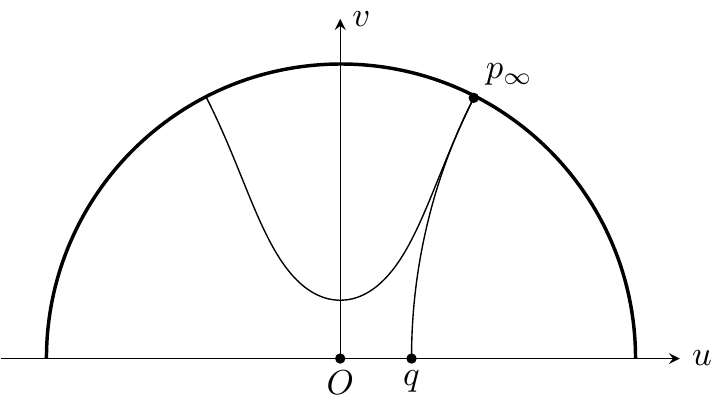}
  \end{center}
  \caption{The distance $x(\infty)$ defined in
  \eqref{eq:Gomes distance} is equal to $\dist(O,q)$, where
  the point $q$ is the intersection of the $u$-axis and the
  unique geodesic which is perpendicular to both the $u$-axis
  at $q$ and $\partial_{\infty}\B_{+}^2$ at $p_\infty$ (here
  $p_\infty$ is one of the asymptotic boundary points of
  $\sigma$ given by \eqref{eq:parametric equation of catenary}).
  In this figure, $y_{0}=0.4$, and so
  $x(\infty)\approx{}0.49268$.}
  \label{fig:Gomes distance}
\end{figure}

\subsubsection{Parametrization of a catenoid}

\begin{definition}\label{def:sphereical catenoid with parameter}
The surface of revolution along the axis $\gamma_{0}$ in
\eqref{eq:rotation axis} generated by the catenary $\sigma_a$
\eqref{eq:parametric equation of catenary} is called a
\emph{catenoid}, and is denoted by $\CC_{a}$ for $0<a<\infty$.
\end{definition}

Next we shall find the parametric equation of the catenoid $\CC_{a}$.
Recall that the generating curve $\sigma_a$ of the catenoid $\CC_{a}$ is
parametrized by arc length: $x=x(a,s)$ and $y=y(a,s)$ given by
\eqref{eq:x(a,s)}, \eqref{eq:y(a,s)} and \eqref{eq:y(a,s)-II}.
Just as in \eqref{eq:(u,v) in terms of (x,y)}, we define
\begin{equation*}
   u(a,s)=\frac{\sinh{}x\cosh{}y}{1+\cosh{}x\cosh{}y}
         \quad\text{and}\quad
   v(a,s)=\frac{\sinh{}y}{1+\cosh{}x\cosh{}y}\ .
\end{equation*}
The parametric equation of the catenoid
$\CC_a$ in $\B^3$ is given by
\begin{equation}\label{eq:position vector of catenary}
  Y(a,s,\theta)=
     \begin{pmatrix}
        u\\
        v\omega_\theta
     \end{pmatrix}\ ,
     \quad
     s\in\R\ \text{and}\
     \theta\in[0,2\pi]\ ,
\end{equation}
where $\omega_\theta=\begin{pmatrix}
        \cos\theta\\
        \sin\theta
     \end{pmatrix}$.
Direct computation shows that the unit normal vector
of the catenoid $\CC_a$ at $Y(a,s,\theta)$ is
\begin{equation}
  N(a,s,\theta)=
     \begin{pmatrix}
        v_{s}\\
        -u_{s}\omega_\theta
     \end{pmatrix}\ ,
     \quad
     s\in\R\ \text{and}\
     \theta\in[0,2\pi]\ ,
\end{equation}
where $u_s$ and $v_s$ are the partial derivatives
of $u$ and $v$ on $s$ respectively.

\begin{remark}The spherical catenoid $\Cscr_{1}(\atilde)$
in the hyperboloid model $\HH^3$ is isometric to the
spherical catenoid $\CC_a$ in the Poincar{\'e} ball model
$\B^3$ if and only if $2\atilde=\cosh(2a)$ (see
Lemma~\ref{lem:relation between spherical catenoids}).
\end{remark}

\subsection{The theorem of Gomes}

Obviously the asymptotic boundary of any spherical catenoid
$\CC_a$ is the union of two circles (see also
\cite[Proposition 3.1]{Gom87}). It's important for us
to determine whether there exists a minimal spherical
catenoid asymptotic to any given pair of disjoint circles on
$S_{\infty}^2$, since in \cite{Wang11a} we
construct quasi-Fuchsian $3$-manifolds
which contain arbitrarily many incompressible minimal surface
by using the (least area) minimal spherical catenoids as the
barrier surfaces.

If $C_{1}$ and $C_{2}$ are two disjoint circles on
$S_{\infty}^{2}$, then they are always coaxial. In fact,
let $P_{1}$ and $P_{2}$ be the geodesic planes asymptotic
to $C_{1}$ and $C_{2}$ respectively, there always exists
a unique geodesic $\gamma$ such that $\gamma$ is perpendicular
to both $P_{1}$ and $P_{2}$. Therefore $C_{1}$ and $C_{2}$ are
coaxial with respect to $\gamma$.
We may define the distance between $C_{1}$ and $C_{2}$ by
\begin{equation}
   d_{L}(C_1,C_2)=\dist(P_1,P_2)\ .
\end{equation}
In order to prove Theorem \ref{thm:Gomes1987-prop3.2-a},
we need define a function $\varrho(a)$ of the parameter $a$.
Let $t$ approach infinity in \eqref{eq:catenary(a,t)},
we define the function
\begin{equation}\label{eq:Gomes function I}
   \varrho(a)=\rho(a,\infty)=\int_{a}^{\infty}
             \frac{\sinh(2a)}{\cosh{}t}
             \frac{dt}{\sqrt{\sinh^2(2t)-\sinh^2(2a)}}\ .
\end{equation}
Using the substitution $t\mapsto{}t+a$, the above function
\eqref{eq:Gomes function I} can be written as
\begin{equation}\tag{\ref{eq:Gomes function I}$'$}
                \label{eq:Gomes function II}
   \varrho(a)=\int_{0}^{\infty}\frac{\sinh(2a)}{\cosh(a+t)}
              \frac{dt}{\sqrt{\sinh^2(2a+2t)-\sinh^2(2a)}}\ .
\end{equation}

\begin{theorem}[Gomes]\label{thm:Gomes1987-prop3.2-a}
There exists a constant $a_{c}\approx{}0.49577$
such that for two disjoint circles
$C_{1},C_{2}\subset{}S_{\infty}^{2}$, if
\begin{equation*}
   d_{L}(C_{1},C_{2})\leq{}2\varrho(a_{c})\approx{}1.00229\ ,
\end{equation*}
then there exist a spherical minimal catenoid $\CC$ which is
asymptotic to $C_{1}\cup{}C_{2}$, where $\varrho(a)$ is the function
defined by \eqref{eq:Gomes function I}.
\end{theorem}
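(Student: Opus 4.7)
The plan is to reduce the existence statement to a surjectivity statement for the function $\varrho(a)$, and then extract the numerical constant $a_c$ as the location of its maximum. Given two disjoint circles $C_1, C_2 \subset S^2_\infty$, let $P_1, P_2$ be the geodesic planes asymptotic to them. Since $P_1 \cap P_2 = \emptyset$, there is a unique common perpendicular geodesic $\gamma$; this automatically makes $C_1, C_2$ coaxial. Applying an isometry in $\Mob(\B^3)$, I would move $\gamma$ to the axis $\gamma_0$ of \eqref{eq:rotation axis} and then translate along $\gamma_0$ so that $P_1, P_2$ are symmetric about the origin. Then $d_L(C_1, C_2)$ equals the separation of $P_1, P_2$ along $\gamma_0$.

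Next, for the catenoid $\CC_a$ with generating curve $\sigma_a$ from \eqref{eq:parametric equation of catenary}, the derivation leading to \eqref{eq:Gomes distance} shows that $\partial_\infty \CC_a$ consists of two circles coaxial with $\gamma_0$, separated by distance exactly $2\varrho(a)$, where $\varrho$ is defined by \eqref{eq:Gomes function I}. Thus the theorem reduces to showing that the map $a \mapsto 2\varrho(a)$ from $(0,\infty)$ is surjective onto $(0, 2\varrho(a_c)]$ for an appropriate constant $a_c$.

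To establish this, I would proceed in three steps. First, show that $\varrho$ is continuous on $(0,\infty)$; the form \eqref{eq:Gomes function II} is preferable here because the change of variable $t \mapsto t+a$ removes the endpoint singularity, allowing a straightforward dominated-convergence argument. Second, compute the two boundary limits
\[
\lim_{a \to 0^+} \varrho(a) = 0
\qquad\text{and}\qquad
\lim_{a \to \infty} \varrho(a) = 0.
\]
The first follows because the factor $\sinh(2a)$ in the numerator of \eqref{eq:Gomes function II} tends to $0$ while the remaining integrand is dominated uniformly on $(0,\infty)$. The second requires a rescaling: after substituting $t = a\tau$ (or examining the leading exponential behavior of $\sinh$ for large argument), the resulting integrand decays fast enough that the full integral vanishes. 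Since $\varrho > 0$ on $(0,\infty)$ and is continuous with $\varrho(0^+) = \varrho(\infty) = 0$, it attains its maximum at some $a_c \in (0,\infty)$. The intermediate value theorem then yields, for every $d \in (0, 2\varrho(a_c)]$, a parameter $a \in (0,\infty)$ with $2\varrho(a) = d$, producing the required catenoid $\CC = \CC_a$ asymptotic to $C_1 \cup C_2$.

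The main obstacle is the asymptotic analysis of $\varrho$ at the two endpoints: the singularity at $t = a$ in \eqref{eq:Gomes function I} and the competing decay/growth of $\sinh(2a)$ versus $\sqrt{\sinh^2(2t) - \sinh^2(2a)}$ must be handled carefully to confirm the two limits equal zero. Pinning down the numerical value $a_c \approx 0.49577$ and $2\varrho(a_c) \approx 1.00229$ goes beyond what is needed for existence alone; this precise value, together with the uniqueness of the critical point that underlies Theorem~\ref{thm:BSE theorem}, is delegated to the technical analysis of $\varrho'(a)$ in Section~\ref{sec:technical lemmas}. For the present theorem only the existence of a maximum is needed, so the IVT argument above suffices once continuity and the two vanishing limits are in place.
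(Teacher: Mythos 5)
Your proposal is correct and follows essentially the same route as the paper: reduce to the range of $a\mapsto 2\varrho(a)$ using that $\partial_\infty\CC_a$ is a coaxial pair of circles at distance $2\varrho(a)$, then combine continuity with $\varrho(0^+)=\varrho(\infty)=0$ and $\varrho>0$ to get a maximum and apply the intermediate value theorem, exactly as in the paper (which bounds $\varrho(a)<\tfrac{\pi}{4}\operatorname{sech} a$ for the limit at infinity). The only difference is cosmetic: you defer the uniqueness of the critical number and the numerical values to the analysis of $\varrho'(a)$, which is also what the paper does by citing the proof of Theorem~\ref{thm:BSE theorem}.
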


\begin{remark}In \cite[p. 402]{dOS98}, de Oliveria and Soret
show that for any two congruent circles
(in $\partial_{\infty}\U^3=\R^2\times\{0\}$)
of Euclidean diameter $d$ and disjoint from each other by the
Euclidean distance $D$, there exists \emph{two} catenoids bounding the
two circles if and only if $D/d\leq\delta$ for some $\delta>0$.
Direct computation shows that
$\delta=\cosh(\varrho(a_c))-1\approx{}0.12763$, where $\varrho(a)$ is
the function defined by \eqref{eq:Gomes function I} and $a_c$ is the
unique critical number of the function $\varrho(a)$.
\end{remark}

\begin{figure}[htbp]
  \begin{center}
     \includegraphics[scale=1]{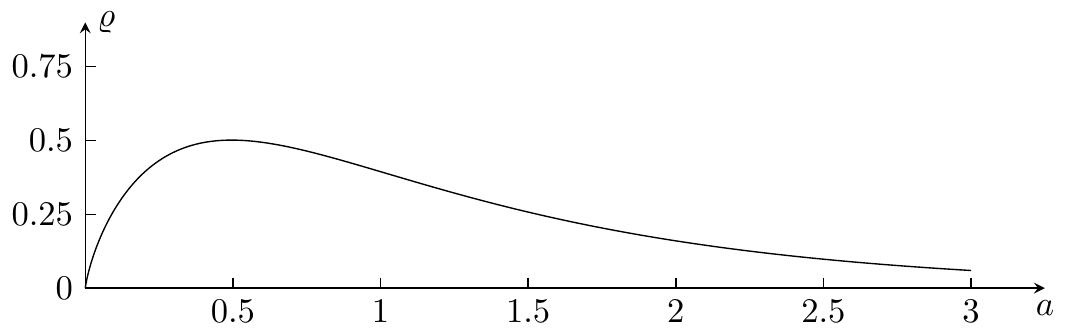}
  \end{center}
  \caption{The graph of the function $\varrho(a)$ defined by
  \eqref{eq:Gomes function I} for
  $a\in[0,3]$. It seems that $\varrho(a)$ only has a unique critical number.}
  \label{fig:Gomes function}
\end{figure}

\begin{proof}[\bf{Proof of Theorem~\ref{thm:Gomes1987-prop3.2-a}}]
Let $\varrho(a)$ be the function defined by
\eqref{eq:Gomes function I} or \eqref{eq:Gomes function II}.
We claim that $\varrho(0)=0$, and as $a$ increases $\varrho(a)$
increases monotonically, reaches a maximum, then decreases
asymptotically to zero as $a$ goes to infinity
(see also \cite[Proposition 3.2]{Gom87} and
Figure~\ref{fig:Gomes function}).

It's easy to show $\varrho(a)\to{}0$ as $a\to\infty$.
In fact, we have
\begin{align*}
   \varrho(a)
      &=\int_{0}^{\infty}\frac{\sinh(2a)}{\cosh(a+t)}
        \frac{dt}{\sqrt{\sinh^2(2a+2t)-\sinh^2(2a)}}\\
      &=\int_{0}^{\infty}\frac{1}{\cosh(t+a)}
        \frac{dt}{\sqrt{\left(\dfrac{\sinh(2a+2t)}
        {\sinh(2a)}\right)^2-1}}\\
      &<\int_{0}^{\infty}\frac{1}{\cosh{}a}
        \frac{dt}{\sqrt{(\sinh(2t)+\cosh(2t))^2-1}}\ .
\end{align*}
Since $\sinh(2t)+\cosh(2t)=e^{2t}$, we have
\begin{equation}\label{eq:estimate on d0}
\begin{aligned}
   \varrho(a)
       &<\frac{1}{\cosh{}a}\int_{0}^{\infty}
         \frac{dt}{\sqrt{e^{4t}-1}}
        =\frac{1}{\cosh{}a}\int_{0}^{\infty}
         \frac{e^{-2t}}{\sqrt{1-e^{-4t}}}\,dt\\
       &=\frac{1}{\cosh{}a}\cdot\frac{\pi}{4}
         \longrightarrow{}0\quad\text{as}\ a\to\infty\ .
\end{aligned}
\end{equation}
Besides, since $\displaystyle\lim_{a\to{}0^{+}}\varrho(a)=0$,
$\varrho(a)>0$ for $a\in(0,\infty)$
and $\varrho(a)\to{}0$ as $a\to\infty$,
it must have at least one  maximum value in $(0,\infty)$.

By the argument in the proof of Theorem~\ref{thm:BSE theorem}
in $\S$\ref{sec:stable catenoid},
we know that $\varrho'(a)$ has a unique zero $a_c$ such that
$\varrho'(a)>0$ if $0<a<a_c$ and $\varrho'(a)<0$ if $a>a_c$, hence
the proof of the claim is complete.
According to the numerical computation: the function
$\varrho(a)$ achieves its (unique) maximum value
$\approx{}0.501143$ when $a=a_c\approx{}0.49577$,
and so $2\varrho(a_c)\approx{}1.00229$.
\end{proof}

Theorem \ref{thm:Gomes1987-prop3.2-a} shows the existence
of spherical minimal catenoids.
On the other hand, we also have the uniqueness of catenoids
in the sense of following theorem proved by Levitt and
Rosenberg (see \cite[Theorem 3.2]{LR85} and
\cite[Theorem 3]{dCGT86}).
Recall that a complete minimal surface $\Sigma$ of $\H^3$ is
\emph{regular at infinity} if $\partial_\infty\Sigma$
is a $C^2$-submanifold of $S^2_{\infty}$ and
$\overline\Sigma=\Sigma\cup\partial_\infty\Sigma$ is a
$C^2$-surface (with boundary) of $\overline{\H^3}$.

\begin{theorem}[Levitt and Rosenberg]
Let $C_1$ and $C_2$ be two disjoint round circles on
$S_{\infty}^{2}$ and let $\CC$ be a connected minimal surface
immersed in $\H^3$ with $\partial_{\infty}\CC=C_1\cup{}C_2$
and $\CC$ regular at infinity. Then $\CC$ is a
spherical catenoid.
\end{theorem}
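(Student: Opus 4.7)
The plan is to apply an Alexandrov-type moving planes argument in the hyperbolic setting to deduce that $\CC$ inherits the continuous rotational symmetry of its asymptotic boundary, and then to classify surfaces of revolution with the prescribed asymptotic boundary.

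First I would fix the geometric setup. Since $C_1$ and $C_2$ are disjoint round circles on $S^2_\infty$, the discussion preceding Theorem~\ref{thm:Gomes1987-prop3.2-a} provides a unique geodesic $\gamma \subset \H^3$ such that both circles are invariant under the $\SO(2)$ subgroup of $\Mob(\B^3)$ fixing $\gamma$ pointwise and under reflection in every totally geodesic plane containing $\gamma$. Passing to the upper half-space model $\U^3$ and normalizing $\gamma$ to the vertical geodesic, $C_1$ and $C_2$ become concentric Euclidean circles centered at the origin in $\partial_\infty \U^3$, while the vertical half-planes $P_c = \{y=c,\,t>0\}$ form a foliation of $\U^3$ by totally geodesic planes in which a distinguished plane $P_0$ containing $\gamma$ corresponds to $c=0$.

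Next I would run the moving planes argument along this foliation to force $\CC$ to be invariant under reflection $\mathcal{R}_{P_0}$. Because $\partial_\infty \CC$ is bounded in $\C \cup \{\infty\}$, the plane $P_c$ is disjoint from $\overline{\CC} \subset \overline{\U^3}$ for $|c|$ large. Sliding $c$ down from $+\infty$, let $c_*$ be the first value at which $P_c$ meets $\overline{\CC}$; for $c$ slightly less than $c_*$ one reflects the cap $\CC \cap \{y>c\}$ across $P_c$ to obtain a minimal surface $\CC^r_c \subset \{y \leq c\}$. Decreasing $c$ further, either $\CC^r_c$ touches $\CC \cap \{y<c\}$ at an interior point for some $c>0$, in which case the strong maximum principle and unique continuation for minimal surfaces force $\CC^r_c \subset \CC$ and immediately yield a plane of symmetry through $\gamma$; or the process continues all the way to $c=0$, at which point $\CC^r_0$ and $\CC \cap \{y<0\}$ share the same asymptotic boundary because $C_1 \cup C_2$ is invariant under $y \mapsto -y$. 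In the latter case the hypothesis that $\CC$ is regular at infinity provides $C^2$-matching of the two minimal surfaces along $C_1 \cup C_2$, and a Hopf-type boundary maximum principle at $S^2_\infty$ forces $\mathcal{R}_{P_0}(\CC) = \CC$.

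Since $P_0$ was an arbitrary plane containing $\gamma$, $\CC$ is invariant under reflection in \emph{every} such plane; composing two such reflections produces rotation about $\gamma$ by any prescribed angle, so $\CC$ is invariant under the full $\SO(2)$-action and is therefore a connected surface of revolution about $\gamma$. The catenary ODE \eqref{eq:differential equations of Pi}, together with the initial data determined by the (unique) distance between $\gamma$ and $\sigma = \CC \cap \B^2_+$, then identifies $\CC$ with the spherical catenoid $\CC_a$ of Definition~\ref{def:sphereical catenoid with parameter} for a unique $a>0$, completing the proof. The main obstacle I anticipate is the boundary step when $c \to 0$: in the non-compact hyperbolic setting one must carefully exploit regularity at infinity to justify a maximum-principle comparison at $S^2_\infty$, since without $C^2$ regularity of $\overline{\CC}$ up to the sphere at infinity the reflected cap $\CC^r_0$ and $\CC \cap \{y<0\}$ might touch only in an asymptotic sense where the standard Hopf lemma fails. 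This boundary-at-infinity maximum principle is the subtlest ingredient and explains why the hypothesis ``regular at infinity'' appears in the theorem.
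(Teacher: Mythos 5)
The paper itself offers no proof of this statement: it is quoted with references to Levitt--Rosenberg \cite{LR85} and do Carmo--Gomes--Thorbergsson \cite{dCGT86}, and those sources argue essentially as you do, by Alexandrov reflection across totally geodesic planes combined with a maximum principle at the sphere at infinity. So your outline is the standard one, but two points need repair before it is a proof. First, your dichotomy is misstated: if the reflected cap touches $\CC\cap\{y<c\}$ at some $c_*>0$ (either at an interior point, or tangentially at a point of $\CC\cap P_{c_*}$ where the surface meets the plane orthogonally --- the second Alexandrov alternative, which you omit), the maximum principle forces $\mathcal{R}_{P_{c_*}}(\CC)=\CC$, i.e.\ symmetry about a plane that does \emph{not} contain $\gamma$. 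This is not ``a plane of symmetry through $\gamma$''; it is a contradiction, because $\partial_\infty\CC=C_1\cup C_2$ would then be invariant under the reflection of $S^2_{\infty}$ fixing $\partial_\infty P_{c_*}$, which is impossible for two circles coaxial about $\gamma$ when $c_*\neq 0$. The correct use of this alternative is to exclude contact for all $c>0$ (using also that the reflected boundary arcs of the convex circles stay strictly inside the disks they bound), so that the sliding reaches $c=0$; you should also note at the outset that $\CC$ lies in a bounded slab, e.g.\ by the convex hull property or by compactness of $\overline{\CC}$ in $\overline{\H^3}$ coming from regularity at infinity.

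Second, the step you defer is precisely the content of the theorem. At $c=0$ the reflected half and $\CC\cap\{y<0\}$ share only their asymptotic boundary, so the contact point, if any, lies on $S^2_{\infty}$, and one needs a boundary maximum principle at infinity: using regularity at infinity, write both surfaces near a point of $C_1\cup C_2$ as $C^2$ graphs (in suitable coordinates up to the boundary, e.g.\ after passing to the compactified half-space and using the degenerate-elliptic equation satisfied by such graphs) over the same boundary curve, and apply a Hopf-type lemma to their difference to conclude they coincide, whence $\mathcal{R}_{P_0}(\CC)=\CC$ by unique continuation. As written, your proposal names this lemma as an anticipated obstacle rather than proving it, so the argument is a correct skeleton that is incomplete exactly where the hypothesis ``regular at infinity'' does its work; this is the technical heart of \cite{LR85} and \cite{dCGT86}. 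The final step is fine: once $\CC$ is invariant under reflection in every plane through $\gamma$, hence a connected surface of revolution about $\gamma$ with two asymptotic boundary circles, the classification of rotational minimal surfaces via the ODE \eqref{eq:differential equations of Pi} (do Carmo--Dajczer, and $\S$\ref{subsec:catnoids} of the paper) identifies it with a spherical catenoid $\CC_a$.
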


\section{Stability of Spherical Catenoids} 
\label{sec:stable catenoid}

In this section we will prove Theorem \ref{thm:BSE theorem}.
Let $\Sigma$ be a complete minimal surface immersed in a
complete Riemannian $3$-manifold $M^{3}$, and let $\Omega$ be any
subdomain of $\Sigma$.
Recall that a \emph{Jacobi field} on $\Omega \subset \Sigma$
is a $C^\infty$ function $\phi$ such that $\Lcal\phi = 0$
on $\Omega$.

According to Theorem~\ref{thm:FCS80}, in order to show that
a complete minimal surface $\Sigma\subset{}M^{3}$ is stable,
we just need to find a positive Jacobi field on $\Sigma$.
On the other hand, if a Jacobi field on $\Sigma$ changes its
sign between the interior and the exterior of a compact subdomain
$\Omega$ of $\Sigma$ and vanishes on $\partial\Omega$,
we can conclude that $\Omega$ is a maximally weakly stable
minimal surface, which also implies that $\Sigma$ is unstable.

The geometry of the ambient space provides useful Jacobi fields.
More precisely, we have the following classical results.

\begin{theorem}[{\cite[pp. 149--150]{Xin03}}]
\label{thm:Killing Jacobi field}
Let $\Sigma$ be a complete minimal surface immersed in
complete Riemannian $3$-manifold $M^{3}$
and let $V$ be a Killing field on $M^{3}$. The function
$\zeta = \inner{V}{N}$, given by the inner product in
$M^{3}$ of the Killing field $V$ with the unit normal $N$ to
the immersion, is a Jacobi field on $\Sigma$.
\end{theorem}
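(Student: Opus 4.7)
The plan is to verify $\Lcal\zeta=0$ pointwise via a local computation that exploits two structural features of a Killing field $V$: the covariant derivative $\nabla V$ is skew-symmetric, $\inner{\nabla_X V}{Y}=-\inner{\nabla_Y V}{X}$, and the second covariant derivative satisfies the identity $\nabla_X\nabla_Y V - \nabla_{\nabla_X Y}V = R(X,V)Y$ (with a standard sign convention for the curvature tensor). Against this one must play the two defining features of a minimal surface: the Weingarten/Gauss formulas and the vanishing of the trace of $A$.

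First I would fix a point $p\in\Sigma$ and choose a local orthonormal frame $\{e_1,e_2,e_3\}$ of $M^{3}$ with $e_3=N$, $\{e_1,e_2\}$ tangent to $\Sigma$, and $\nabla^{\Sigma}_{e_i}e_j|_{p}=0$, so that $\Delta_\Sigma\zeta|_p = \sum_i e_i(e_i(\zeta))|_p$. Using the Weingarten identity $\nabla_{e_i}N=\sum_j h_{ij}e_j$, the tangential first derivative would read
\begin{equation*}
   e_i(\zeta)=\inner{\nabla_{e_i}V}{N}+\sum_j h_{ij}\inner{V}{e_j}\ .
\end{equation*}
Differentiating once more along $e_i$ and summing over $i=1,2$ then gives $\Delta_\Sigma\zeta$ at $p$. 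In the expansion the pairings $\sum_{i,j}h_{ij}\inner{\nabla_{e_i}V}{e_j}$ vanish because $h_{ij}$ is symmetric while $\inner{\nabla_{e_i}V}{e_j}$ is skew in $(i,j)$ by Killing; the terms $\sum_i\inner{\nabla_{e_i}\nabla_{e_i}V}{N}$ reduce via the second-order Killing identity to $-\Ric(N,N)\zeta$ once the minimality $\sum_i h_{ii}=0$ is used to discard the corrections coming from $\nabla_{\nabla_{e_i}e_i}V$; the term $\sum_{i,j}h_{ij}\inner{V}{\nabla_{e_i}e_j}$ collapses to $|A|^{2}\zeta$ via the Gauss formula $\nabla_{e_i}e_j|_{p}=h_{ij}N$; and the residual $\sum_{i,j}h_{ij,i}\inner{V}{e_j}$ vanishes by the Codazzi equation combined again with minimality. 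Collecting the signs yields $\Delta_\Sigma\zeta = -(|A|^{2}+\Ric(N))\zeta$, i.e.\ $\Lcal\zeta=0$.

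The main obstacle is not conceptual but rather careful sign and index bookkeeping: the four identities in play (Killing skew-symmetry, the Killing second-order identity, the Weingarten/Gauss formulas, and the Codazzi equation) each come with their own sign conventions which must be kept consistent, and several of the cancellations above rely on spotting symmetric-antisymmetric pairings of $h_{ij}$ against $\nabla V$. Since this is a classical result quoted from Xin's monograph, no new geometric input is required beyond the Killing equation and the minimality assumption; the proof amounts to executing this tensor calculation correctly.
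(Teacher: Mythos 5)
The paper does not actually prove this statement---it is quoted from Xin's monograph---so there is no internal proof to match; your direct tensor computation is a legitimate and classical route. It is worth noting, though, that the cheapest argument available inside this paper is to specialize Theorem~\ref{thm:BdC}: the flow $\phi_t$ of the Killing field $V$ consists of ambient isometries, so $X(t,\cdot)=\phi_t\circ f$ is a $1$-parameter family of minimal immersions with variation field $V$, and $\zeta=\inner{V}{N}$ is then a Jacobi field with no curvature bookkeeping at all. Your computation, executed carefully, proves the same thing from scratch and makes the role of the Killing identities explicit; both are fine.

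Two points in your bookkeeping need repair, however. First, your claimed cancellation pattern is only valid in special ambient spaces: the second-order Killing identity gives $\sum_i\inner{\nabla_{e_i}\nabla_{e_i}V}{N}=\sum_i\inner{R(e_i,V)e_i}{N}=-\Ric(V,N)=-\zeta\,\Ric(N,N)-\Ric(V^{T},N)$, so it does \emph{not} reduce to $-\Ric(N)\zeta$ alone unless $\Ric(V^{T},N)=0$; likewise the contracted Codazzi equation with $H=0$ gives $\sum_i h_{ij,i}=\Ric(e_j,N)$ (up to sign), which does \emph{not} vanish for a general $M^{3}$. The theorem as stated is for an arbitrary Riemannian $3$-manifold, and there the two tangential pieces $-\Ric(V^{T},N)$ and $\sum_{i,j}h_{ij,i}\inner{V}{e_j}$ cancel \emph{against each other}, not separately; only in a space form such as $\H^3$ (or an Einstein manifold) do they each vanish. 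Second, your Weingarten and Gauss formulas as written are mutually inconsistent: differentiating $\inner{N}{e_j}=0$ shows that $\nabla_{e_i}N=\sum_j h_{ij}e_j$ forces $\nabla_{e_i}e_j|_p=-h_{ij}N$, not $+h_{ij}N$. Since the $|A|^2$ term is quadratic the final identity survives, but with your stated conventions the intermediate signs do not close up, and since you yourself identify sign consistency as the whole difficulty of the proof, these two items are exactly where the writeup must be tightened.
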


\begin{theorem}[{\cite[Theorem 2.7]{BdC80b}}]
\label{thm:BdC}
Let $X(a,\cdot):\Sigma\to{}M^{3}$ be a $1$-parameter family of
minimal immersions. Then, for each fixed $a_0$, the function
\begin{equation*}
  \xi=\biginner{\ppl{X}{a}(a_0,\cdot)}{N}
\end{equation*}
is a Jacobi field on $X(a_0,\Sigma)$, where
$\inner{\cdot}{\cdot}$ is the inner product in $M^{3}$ and
$N$ is the unit normal vector field on the minimal surface
$X(a_0,\Sigma)$.
\end{theorem}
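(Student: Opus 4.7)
The plan is to derive the claim from the standard first-variation-of-mean-curvature formula. First, along the reference minimal surface $X(a_0,\Sigma)$, I would split the variation vector field into its normal and tangential components,
\[
\ppl{X}{a}(a_0,\cdot) = \xi\,N + T,
\]
where $\xi = \inner{\ppl{X}{a}(a_0,\cdot)}{N}$ and $T$ is tangent to $X(a_0,\Sigma)$. Because a purely tangential deformation only reparametrizes the image without moving it, it does not change the mean curvature of the immersed image to any order; hence, to first order in $a$, the infinitesimal change in $H$ comes entirely from the normal component $\xi\,N$.

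Next, I would invoke the classical normal-variation formula: if a minimal surface is deformed in $M^3$ in the normal direction $f\,N$, then the first variation of the mean curvature is
\[
\ppl{H}{t}\bigg|_{t=0} = \Delta_{\Sigma}f + (|A|^2 + \Ric(N))\,f = \Lcal f.
\]
This is precisely the reason the Jacobi operator takes the form \eqref{eq:Jacobi operator}; its derivation amounts to differentiating the induced metric and the second fundamental form along the deformation and then applying the Gauss equation to turn the sectional-curvature contribution into $\Ric(N)$ in the three-dimensional setting.

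Finally, since $X(a,\cdot)$ is a minimal immersion for every $a$ near $a_0$, the function $H(a,\cdot)$ vanishes identically, so $\ppl{H}{a}\big|_{a=a_0}\equiv 0$. Combining this identity with the two observations above immediately yields $\Lcal\xi = 0$ on $X(a_0,\Sigma)$, which is exactly the Jacobi-field property claimed. The only delicate point in the argument is the justification that the tangential component $T$ does not contribute to the first variation of $H$; I would handle this by noting that the mean curvature is an invariant of the immersed image rather than of its parametrization, so $T$ can be absorbed infinitesimally by a diffeomorphism of the parameter domain that leaves $H$ unchanged.
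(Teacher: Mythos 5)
The paper does not prove this statement at all—it is quoted as a known result with the citation to Barbosa--do Carmo—so there is no in-paper argument to compare against; your linearization proof is the standard one and it is essentially correct. Decomposing $\partial X/\partial a(a_0,\cdot)=\xi N+T$, invoking the normal-variation formula $\partial_t H=\Lcal f$ for a deformation $fN$ of a minimal surface, and differentiating the identity $H(a,\cdot)\equiv 0$ in $a$ does yield $\Lcal\xi=0$. The one point you should state more carefully is the tangential term: it is not true in general that a tangential variation leaves the mean curvature ``unchanged to any order'' as a function on the parameter domain; it changes it by $dH(T)$ (equivalently, absorbing $T$ into a family of diffeomorphisms $\phi_a$ of $\Sigma$ replaces $H$ by $H\circ\phi_a$). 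What rescues the argument is precisely minimality of the reference surface: $H\equiv 0$ on $X(a_0,\Sigma)$, so $dH(T)=0$ and the first variation of $H$ reduces to $\Lcal\xi$, which must vanish since every $X(a,\cdot)$ is minimal. With that clarification (and the harmless remark that the reparametrization trick makes the variation field purely normal without changing the images), your proof is complete and is, in substance, the argument of the cited reference.
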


\subsection{Jacobi fields on spherical catenoids}
Next we will follow B{\'e}rard and Sa Earp
\cite{BSE10} to introduce the vertical Jacobi fields and
the variation Jacobi fields on the minimal spherical catenoids
$\{\CC_a\}_{a>0}$ in $\B^3$, which
will be used to prove Theorem~\ref{thm:BSE theorem}.

\begin{definition}Let $V$ be the Killing vector field associated
with the hyperbolic translations along the geodesic
$t\mapsto(\tanh(t/2),0,0)\in\B^3$.
The \emph{vertical Jacobi field} on the
catenoid $\CC_a$ is the function
\begin{equation}\label{eq:vertical Jacobi field}
  \zeta(a,s)=\inner{V(a,s,\theta)}{N(a,s,\theta)}\ ,
\end{equation}
where $V(a,s,\theta)$ is the restriction of the Killing vector field
$V$ to the minimal catenoid $\CC_a$ defined by
\eqref{eq:position vector of catenary}.

The \emph{variation Jacobi field} on the
catenoid $\CC_a$ is
\begin{equation}\label{eq:variation Jacobi field}
  \xi(a,s)=-\inner{Y_a(a,s,\theta)}{N(a,s,\theta)}\ ,
\end{equation}
where $Y_a=\ppl{Y}{a}$.
\end{definition}

In order to find the detail expressions of the vertical
and the variation Jacobi fields on the catenoids, we need
some notations (see \cite[$\S$4.2]{BSE10}). Let
\begin{equation}\label{eq:def of f(a,s)}
  f(a,s)=\frac{\sinh^{2}(2a)\cosh(2s)}{\cosh^{2}(2a)\cosh^{2}(2s)-1}\ ,
\end{equation}
and let
\begin{equation}\label{eq:def of I(a,t)}
  I(a,t)=\frac{n(\cosh(2a),\cosh(2t))}
  {d(\cosh(2a),\cosh(2t))}
\end{equation}
where
\begin{itemize}
  \item $n(A,T)=A(3-A^2)T^2+(A^2-1)T-2A$, and
  \item $d(A,T)=(AT+1)^2(AT-1)^{3/2}$.
\end{itemize}
For the functions $x(a,s)$ and $y(a,s)$ given by \eqref{eq:x(a,s)}
and \eqref{eq:y(a,s)}, the notations
$x_a$, $x_s$, $y_a$ and $y_s$ denote the partial derivatives of
$x(a,s)$ and $y(a,s)$ on $a$ and $s$
respectively.

\begin{proposition}[{\cite[$\S$4.2.1]{BSE10}}]
\label{prop:vertical and variation fields}
The vertical Jacobi field $\zeta(a,s)$ is given by
\begin{equation}\label{eq:vertical field}
  \zeta(a,s)
     = \sqrt{2}\,\cosh(y(a,s))y_{s}(a,s)
     = \frac{\cosh(2a)\sinh(2s)}{\sqrt{\cosh(2a)\cosh(2s)-1}}\ .
\end{equation}
The variation Jacobi field $\xi(a,s)$ is given by
\begin{align}\label{eq:variation fiels I}
  \xi(a,s)
     &=-\cosh(y(a,s))(x_a(a,s)y_s(a,s)-x_s(a,s)y_a(a,s)) \\
     \label{eq:variation fiels II}
     &=f(a,s)-\zeta(a,s)\int_{0}^{s}I(a,t)dt\ ,
\end{align}
where $f(a,s)$ and $I(a,t)$ are given by
\eqref{eq:def of f(a,s)} and \eqref{eq:def of I(a,t)}
respectively.
\end{proposition}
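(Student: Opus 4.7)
The plan is to verify both formulas by direct computation, relying on Theorem~\ref{thm:Killing Jacobi field} and Theorem~\ref{thm:BdC} to certify a priori that the relevant inner products are Jacobi fields, so that no Jacobi-equation check is required and the task reduces to making the inner products explicit.

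For $\zeta$ I would pass to Fermi coordinates $(x,y,\theta)$ around the rotation axis $\gamma_{0}$, in which $\B^{3}$ carries the warped product metric $\cosh^{2}y\,dx^{2}+dy^{2}+\sinh^{2}y\,d\theta^{2}$, the translational Killing field $V$ becomes a multiple of $\partial_{x}$, and the catenoid $\CC_{a}$ is parametrized by $(s,\theta)\mapsto(x(a,s),y(a,s),\theta)$. Because the unit normal $N$ is orthogonal to the rotational orbit $\partial_{\theta}$, it lies in the $(x,y)$-plane; orthogonality to $T=x_{s}\partial_{x}+y_{s}\partial_{y}$ together with $|N|=1$ pins down $N=\pm\bigl((y_{s}/\cosh y)\partial_{x}-\cosh y\,x_{s}\,\partial_{y}\bigr)$, from which $\inner{V}{N}$ is a scalar multiple of $\cosh(y)y_{s}$, giving the first equality in \eqref{eq:vertical field}. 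The second equality is then purely algebraic: substituting the explicit $y_{s}$ from \eqref{eq:y(a,s)} and using the factorizations $\cosh^{2}(2a)\cosh^{2}(2s)-1=(\cosh(2a)\cosh(2s)-1)(\cosh(2a)\cosh(2s)+1)$ together with $\cosh(2a)\cosh(2s)+1=2\cosh^{2}y$ (from $\cosh(2y)=\cosh(2a)\cosh(2s)$) yields the closed form.

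For $\xi$ I would compute in the same coordinates. Since $\theta$ is $a$-independent, $Y_{a}=x_{a}\partial_{x}+y_{a}\partial_{y}$, and the same $N$ gives
\[
  \xi=-\inner{Y_{a}}{N}=-\cosh(y)\bigl(x_{a}y_{s}-x_{s}y_{a}\bigr),
\]
which is \eqref{eq:variation fiels I}. To derive \eqref{eq:variation fiels II}, I would differentiate the integral formulas \eqref{eq:x(a,s)} and \eqref{eq:y(a,s)} with respect to $a$---tracking the $a$-dependence in the prefactor $\sqrt{2}\sinh(2a)$ of \eqref{eq:x(a,s)}, in the additive term of \eqref{eq:y(a,s)}, and inside the integrands---then substitute into $-\cosh(y)(x_{a}y_{s}-x_{s}y_{a})$ and reorganize under a common denominator. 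The boundary contributions from the $a$-derivatives of the prefactors should collapse to $f(a,s)$ in \eqref{eq:def of f(a,s)}, while the remaining integrals should combine into $-\zeta(a,s)\int_{0}^{s}I(a,t)\,dt$, with the specific numerator and denominator of $I(a,t)$ in \eqref{eq:def of I(a,t)} emerging naturally from the algebra.

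The main obstacle will be the last reorganization: verifying that the integrand of $x_{a}y_{s}-x_{s}y_{a}$ factors exactly as $-\zeta(a,s)I(a,t)/\cosh(y)$ with $n(A,T)=A(3-A^{2})T^{2}+(A^{2}-1)T-2A$ and $d(A,T)=(AT+1)^{2}(AT-1)^{3/2}$ is a nontrivial polynomial identity in $A=\cosh(2a)$ and $T=\cosh(2t)$, amounting essentially to an integration-by-parts bookkeeping in disguise. Once this identity is established, Theorem~\ref{thm:Killing Jacobi field} and Theorem~\ref{thm:BdC} guarantee $\Lcal\zeta=\Lcal\xi=0$ automatically, so no further verification is needed.
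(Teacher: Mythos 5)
The paper itself offers no proof of this proposition --- it is quoted verbatim from B\'erard and Sa Earp \cite[$\S$4.2.1]{BSE10} --- so there is no internal argument to compare against; judged on its own, your direct-verification plan is sound and is the natural route. Your Fermi-coordinate setup is correct: with the metric $\cosh^2y\,dx^2+dy^2+\sinh^2y\,d\theta^2$, the parametrization \eqref{eq:x(a,s)}--\eqref{eq:y(a,s)} is unit speed, the normal is $N=\pm\bigl((y_s/\cosh y)\,\partial_x-\cosh y\,x_s\,\partial_y\bigr)$, and $-\inner{Y_a}{N}=-\cosh y\,(x_ay_s-x_sy_a)$ gives \eqref{eq:variation fiels I}; you also correctly use $\cosh(2y)=\cosh(2a)\cosh(2s)$, which is consistent with \eqref{eq:arc length of catenary} (the printed \eqref{eq:y(a,s)-II} contains a typo). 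Two points need more care than your sketch gives them. First, ``$\inner{V}{N}$ is a scalar multiple of $\cosh(y)y_s$'' does not yield \eqref{eq:vertical field}: the unit-speed translation field $V=\partial_x$ gives exactly $\cosh(y)y_s$, and the factor $\sqrt2$ is a normalization inherited from \cite{BSE10} (equivalently $V=\sqrt2\,\partial_x$). This constant cannot be left floating, because \eqref{eq:variation fiels II} is an identity only with that normalization: one finds $x_a(a,s)=\sqrt2\int_0^sI(a,t)\,dt$, so $-\cosh(y)y_s\,x_a$ equals $-\zeta\int_0^sI\,dt$ precisely when $\zeta=\sqrt2\cosh(y)y_s$. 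Second, $f(a,s)$ does not arise as a ``boundary contribution from the prefactors'' of \eqref{eq:x(a,s)}: rather, the $a$-derivative of \eqref{eq:y(a,s)} has an integrand that is an exact $t$-derivative and integrates in closed form to $y_a=\sinh(2a)\cosh(2s)/\sqrt{\cosh^2(2a)\cosh^2(2s)-1}$, after which $\cosh(y)\,x_s\,y_a=f(a,s)$ exactly, while the whole term $-\cosh(y)\,y_s\,x_a$ supplies $-\zeta\int_0^sI\,dt$. The remaining work is exactly the polynomial identity you flag: putting the three pieces of $\partial_a$ of the integrand of \eqref{eq:x(a,s)} over the denominator $(AT+1)^2(AT-1)^{3/2}$, the numerator is $2A(A^2T^2-1)-2(A^2-1)T(AT-1)-(A^2-1)T(AT+1)=A(3-A^2)T^2+(A^2-1)T-2A$, which matches $n(A,T)$ in \eqref{eq:def of I(a,t)}; so the computation does close up. With the constant pinned down as above, the appeal to Theorem~\ref{thm:Killing Jacobi field} and Theorem~\ref{thm:BdC} for the Jacobi property is legitimate, and your argument is complete.
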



Since $x(a,\infty)$ is well defined for any $a>0$, we may set
\begin{equation}
  E(a)=\ddl{}{a}\,x(a,\infty)
      =\sqrt{2}\int_{0}^{\infty}I(a,t)dt\ .
\end{equation}
Equivalently we have the following identity
(see \cite[p. 3665]{BSE10}):
\begin{equation}\label{eq:E=d0'}
  E(a)=\frac{\varrho'(a)}{\sqrt{2}}\ ,
\end{equation}
where $\varrho'(a)$ is defined by \eqref{eq:derivative of d0}, which
is derivative of the function $\varrho(a)$
given by \eqref{eq:Gomes function I}.

For any (connected) interval $\mathbf{I}\subset\R$, we define
\begin{equation}
  \CC(a,\mathbf{I})=\{Y(a,s,\theta)\in\B^3\ |\ s\in{}\mathbf{I}
  \ \text{and}\ \theta\in[0,2\pi]\}\ ,
\end{equation}
where $Y(a,s,\theta)$ is given by \eqref{eq:position vector of catenary}.
Obviously $\CC(a,\mathbf{I})\subset\CC_{a}$.

\begin{lemma}[{\cite[Lemma 4.5]{BSE10}}]
\label{lem:BSE-stability-half-catenoid}
For any constant $a>0$, the half catenoids $\CC(a,(-\infty,0])$
and $\CC(a,[0,\infty))$ are both stable.

Any Jacobi field $\eta(a, s)$ depending only on the radial
variable s on $\CC_a$ can change its sign at most
once on either $(-\infty,0]$ or $[0,\infty)$.
\end{lemma}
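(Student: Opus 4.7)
The plan is to use the vertical Jacobi field $\zeta(a,s)$ of Proposition~\ref{prop:vertical and variation fields}. From the closed form in \eqref{eq:vertical field}, one reads off that $\zeta(a,s)>0$ on $\{s>0\}$, $\zeta(a,s)<0$ on $\{s<0\}$, and $s=0$ is the unique (simple) zero of $\zeta(a,\cdot)$. By Theorem~\ref{thm:Killing Jacobi field}, $\Lcal\zeta=0$ on all of $\CC_a$, since $\zeta$ is the normal component of a Killing field of $\B^{3}$.

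I would prove the second assertion first, as it feeds into the proof of the first. The equation $\Lcal\eta=0$ for a function $\eta(a,s)$ depending only on $s$ reduces on the surface of revolution $\CC_a$ to a second-order linear ODE in $s$ of Sturm--Liouville type. The vertical Jacobi field $\zeta(a,\cdot)$ is one solution, whose sole zero on all of $\R$ lies at $s=0$. By the Sturm separation theorem, any radial Jacobi field $\eta(a,\cdot)$ that is not proportional to $\zeta$ cannot have two zeros inside either $(0,\infty)$ or $(-\infty,0)$, since between two consecutive zeros of $\eta$ there would have to lie a zero of $\zeta$. Hence $\eta$ admits at most one sign change on each closed half-line $[0,\infty)$ and $(-\infty,0]$; the exceptional case $\eta=c\zeta$ has only the single zero at $s=0$ and no sign change on either half-line.

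For the first assertion I would restrict $\zeta$ to $\CC(a,[0,\infty))$, where it is strictly positive in the interior and vanishes simply on the boundary circle $\{s=0\}$. For any test function $f\in C_{0}^\infty(\CC(a,[0,\infty)))$, the quotient $g=f/\zeta$ extends smoothly across $s=0$ by the division lemma, because $f$ has compact support (so it vanishes to at least first order at the boundary) and $\zeta$ has a simple zero there. A standard integration by parts, using $\Delta_\Sigma\zeta=-(|A|^{2}+\Ric(e_3))\zeta$, then produces the Fischer--Colbrie--Schoen identity
\[
-\int_{\CC(a,[0,\infty))} f\,\Lcal f
= \int_{\CC(a,[0,\infty))} \zeta^{2}\,|\nabla g|^{2}\,\geq\,0,
\]
so by \eqref{eq:1st eigenvalue of Omega} every compact subdomain $\Omega\subset\CC(a,[0,\infty))$ has $\lambda_1(\Omega)\geq 0$. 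Strict positivity $\lambda_1(\Omega)>0$ follows because rotational symmetry makes a first eigenfunction of $\Omega$ radial (higher Fourier modes $\phi_n(s)e^{in\theta}$ with $n\neq 0$ contribute an additional positive $n^2/\rho(s)^2$ term to the Rayleigh quotient), so $\lambda_1(\Omega)=0$ would yield a nontrivial radial Jacobi field on $\Omega$ vanishing on both boundary circles, contradicting the Sturm argument above. Replacing $\zeta$ by $-\zeta$ gives the same conclusion for $\CC(a,(-\infty,0])$.

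The principal technical point is the smooth extension of $g=f/\zeta$ across $s=0$: it rests on the simple-zero structure of $\zeta$ at the waist together with the rotationally symmetric geometry of the catenoid in a neighborhood of $\{s=0\}$, and it is what allows the Fischer--Colbrie--Schoen identity to be applied directly to compact subdomains that meet the boundary circle.
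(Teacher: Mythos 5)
Your argument is correct in substance, but it is organized differently from the paper's proof, so a comparison is worthwhile. The paper proves the first assertion in one line: $\zeta(a,s)$ is a Jacobi field (Theorem~\ref{thm:Killing Jacobi field}) which, by \eqref{eq:vertical field}, does not change sign on either half-catenoid, and stability follows in the spirit of Theorem~\ref{thm:FCS80}. It then deduces the second assertion \emph{from} the first: if a radial Jacobi field $\eta$ had two zeros $0<z_1<z_2$, its restriction to the band $\CC(a,[z_1,z_2])$ would be a Dirichlet Jacobi field, forcing $\lambda_{1}(\CC(a,[z_1,z_2]))\leq 0$ and contradicting, via Lemma~\ref{lem:monotonicity of eigenvalue}, the stability of the half-catenoid. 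You reverse the logical dependency: you obtain the second assertion directly from the Sturm separation theorem for the radial Sturm--Liouville equation, using $\zeta$ (whose only zero is $s=0$, and any radial Jacobi field vanishing there is proportional to $\zeta$ by the Wronskian) as the comparison solution, and you prove the first assertion by the explicit Fischer--Colbrie--Schoen substitution $f=\zeta g$ instead of quoting the theorem. Both routes are legitimate; yours makes the ODE mechanism behind the ``at most one sign change'' statement explicit and gives a self-contained quadratic-form proof of stability, while the paper's is shorter because the eigenvalue monotonicity lemma does the work. Two small points in your write-up should be patched, though neither is a genuine gap: first, compact support of $f$ in $\CC(a,[0,\infty))$ does not by itself force $f$ to vanish to first order on the waist circle; under the paper's convention \eqref{eq:1st eigenvalue of Omega} test functions lie in $C_{0}^{\infty}(\Omega)$ and hence vanish near $\partial\Omega$, so no extension of $g=f/\zeta$ across $s=0$ is needed at all (and if one only assumes $f$ vanishes on $\{s=0\}$, Hadamard's lemma together with the simplicity of the zero of $\zeta$ gives the smooth quotient). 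Second, your strict-positivity step applies the radial reduction to an arbitrary compact $\Omega$, which need not be rotationally symmetric; the clean fix is to enclose $\Omega$ in a band $\CC(a,[0,s_2])$, run your argument there (where the first eigenfunction is indeed radial, and a zero-eigenvalue radial eigenfunction vanishing at $s=0$ and $s=s_2$ would be proportional to $\zeta$, which is impossible), and then invoke the strict monotonicity in Lemma~\ref{lem:monotonicity of eigenvalue}.
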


\begin{proof}The first part follows from the fact that $\zeta(a,s)$
doesn't change its sign on either $\CC(a,(-\infty,0])$ or
$\CC(a,[0,\infty))$ and $\Lcal{}\zeta=0$.

Assume that some Jacobi field $\eta(a,s)$ on $\CC_a$ changes its
sign more than once on $[0,\infty)$, then $\eta(a,s)$ has at least
two zeros on $[0,\infty)$, say
$0<{}z_1<z_2<\cdots$. Let $\mathbf{I}=[z_1,z_2]$ and let
$\phi(a,s)$ be the restriction of $\eta(a,s)$ to
$\CC(a,\mathbf{I})$, then we have
$\phi\in{}C_{0}^\infty(\CC(a,\mathbf{I}))$ and $\Lcal{}\phi=0$,
which imply that $\lambda_{1}(\CC(a,\mathbf{I}))\leq{}0$.
This is a contradiction, since $\CC(a,\mathbf{I})$ is a compact
connected subdomain of $\CC(a,[0,\infty))$, which must be stable.
\end{proof}

The following theorem, whose proof can be found
in \cite[p. 3663]{BSE10}, is crucial to the proof of
Theorem~\ref{thm:BSE theorem}. Because of \eqref{eq:E=d0'},
\eqref{eq:derivative of d0} and
Lemma~\ref{lem:derivative of d0}, we always have
$\cosh^{2}(2a)<3$ if $E(a)=0$, hence it is not necessary to
consider the case when $\cosh^{2}(2a)<3$ in the
the proof of \cite[Theorem 4.7 (1)]{BSE10}.

\begin{theorem}[{\cite[Theorem 4.7 (1)]{BSE10}}]
\label{thm:BSE-stability}
Let $\sigma_a$ be the catenary given by
\eqref{eq: catenary equation} and let $\CC_a$ be the
minimal surface of revolution along the $u$-axis
whose generating curve is the catenary $\sigma_a$.
\begin{enumerate}
  \item If $E(a)\leq{}0$, then $\CC_a$ is stable.
  \item If $E(a)>0$, then $\CC_a$ is unstable
        and has index $1$.
\end{enumerate}
\end{theorem}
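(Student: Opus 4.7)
The plan is to use the variation Jacobi field $\xi(a,s)$ from Proposition~\ref{prop:vertical and variation fields} as the key test function, combined with Theorem~\ref{thm:FCS80}, Lemma~\ref{lem:monotonicity of eigenvalue}, and Lemma~\ref{lem:BSE-stability-half-catenoid}. First I would record two elementary facts: $\xi(a,\cdot)$ is even in $s$ (since $f(a,s)$ is even, $\zeta(a,s)$ is odd, and $\int_0^s I(a,t)\,dt$ is odd), and $\xi(a,0)=f(a,0)=1>0$. Using the explicit formulas for $f$ and $\zeta$, a short asymptotic computation gives
\begin{equation*}
\lim_{s\to\infty}\frac{f(a,s)}{\zeta(a,s)}=0
\qquad\text{and}\qquad
\int_0^\infty I(a,t)\,dt=\frac{E(a)}{\sqrt{2}},
\end{equation*}
so that $\xi(a,s)/\zeta(a,s)\to -E(a)/\sqrt{2}$ as $s\to+\infty$, while $\zeta(a,s)\to+\infty$. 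This single limit is what drives both parts.

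For part (1), suppose first that $E(a)<0$. Then $\xi/\zeta$ is strictly positive near infinity, so $\xi(a,s)>0$ for all sufficiently large $s$; together with $\xi(a,0)>0$ and the at-most-one-sign-change property from Lemma~\ref{lem:BSE-stability-half-catenoid}, this forces $\xi>0$ on $[0,\infty)$ and, by evenness, on all of $\CC_a$. Theorem~\ref{thm:FCS80} then yields global stability. The borderline case $E(a)=0$ requires extra care because $\xi/\zeta$ only tends to $0$; I would handle it by continuity of $E$ in $a$, picking a sequence $a_n\to a$ with $E(a_n)<0$, normalizing the corresponding positive Jacobi fields on $\CC_{a_n}$ at a base point, and passing to a nonnegative Jacobi limit $\phi$ on $\CC_a$. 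The strong maximum principle for the elliptic operator $\Lcal$ then upgrades nonnegativity to strict positivity, and Theorem~\ref{thm:FCS80} again gives stability.

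For part (2), if $E(a)>0$ then $\xi(a,s)/\zeta(a,s)\to -E(a)/\sqrt{2}<0$ at infinity, so $\xi\to-\infty$. Since $\xi(a,0)>0$ and radial Jacobi fields change sign at most once on $[0,\infty)$, there is a unique $s_0>0$ with $\xi(a,s_0)=0$, $\xi>0$ on $[0,s_0)$ and $\xi<0$ on $(s_0,\infty)$. By evenness, on the compact annulus $\Omega_0=\CC(a,[-s_0,s_0])$ the function $\xi$ is a Jacobi field vanishing on $\partial\Omega_0$ and positive in the interior, so $\lambda_1(\Omega_0)=0$ and $\Omega_0$ is maximally weakly stable. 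Any strictly larger compact subdomain of $\CC_a$ is then unstable by Lemma~\ref{lem:monotonicity of eigenvalue}, so $\lambda_1(\CC_a)<0$. To upgrade instability to \emph{index exactly one}, I would exploit the rotational symmetry: Fourier-decomposing sections of the normal bundle along the $S^1$-orbits reduces $\Lcal$ to a family of radial Sturm--Liouville operators $\Lcal_k$, $k\in\Z$, with $\Lcal_k-\Lcal_0$ a nonnegative multiplication operator; the radial sector $k=0$ contributes exactly one negative eigenvalue thanks to the uniqueness of $s_0$ and a Sturm-type oscillation argument on the half-catenoid, while the modes $k\neq 0$ contribute none.

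The hard part will be the critical case $E(a)=0$ in part (1): establishing strict positivity of a limit Jacobi field when its asymptotic ratio with $\zeta$ is exactly zero, where one has to rule out a pathological normalization limit that vanishes in the bulk. A secondary technical issue is making the angular Fourier decomposition fully rigorous for the index-one conclusion, which is routine on a warped-product surface of revolution but requires writing $\Delta_{\CC_a}$ explicitly in the induced metric and checking that the $k\neq 0$ radial operators have trivial kernel and no negative spectrum.
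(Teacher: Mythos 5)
Your handling of the two main cases coincides with the paper's: for $E(a)<0$ you use $\xi(a,0)=1$, evenness, the limit $\xi/\zeta\to-E(a)/\sqrt2$, the at-most-one-sign-change property of Lemma \ref{lem:BSE-stability-half-catenoid} and Theorem \ref{thm:FCS80}; for $E(a)>0$ you produce the two symmetric zeros $\pm s_0$, the maximally weakly stable annulus and instability via Lemma \ref{lem:monotonicity of eigenvalue}. The genuine gap is the borderline case $E(a)=0$. Your plan is to choose $a_n\to a$ with $E(a_n)<0$ and pass the positive fields to a limit, but continuity of $E$ does not produce such a sequence: a priori $E$ could be $\geq 0$ on a whole neighborhood of $a$ (a degenerate critical point of $\varrho$), and approximating from the side $E>0$ is useless, since the limit of Jacobi fields that change sign need not be nonnegative. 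Note also that at the stage where this theorem is proved one does not yet know the sign pattern of $E=\varrho'/\sqrt2$; that is established afterwards (Lemmas \ref{lem:derivative of d0} and \ref{lem:second derivative of d[lambda]}) and uses this theorem as input for Theorem \ref{thm:BSE theorem}. The paper instead treats $E(a)=0$ directly: it rewrites
\begin{equation*}
  \xi(a,s)=f(a,s)+\zeta(a,s)\int_{s}^{\infty}I(a,t)\,dt\ ,
\end{equation*}
and uses \eqref{eq:E=d0'}, \eqref{eq:derivative of d0} and Lemma \ref{lem:derivative of d0} to see that $E(a)=0$ forces $\cosh^{2}(2a)\leq\left(\tfrac{1+\sqrt5}{2}\right)^{2}<3$, which makes $I(a,t)>0$ for large $t$, hence $\xi>0$ near infinity and therefore everywhere. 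Some quantitative input of this kind (or a proof that $E<0$ on one side of every zero of $E$) is unavoidable, and your proposal supplies neither; you correctly identified this as the hard point but the strategy you offer for it would not close it.

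A secondary issue is the claim of index exactly one when $E(a)>0$: the paper simply quotes Seo and Tuzhilin, while you propose a self-contained angular Fourier decomposition. That route is viable, but the decisive spectral facts are left unproved: the observation that the $k\neq0$ radial quadratic forms dominate the $k=0$ one does not by itself exclude negative eigenvalues in the nonzero modes (the $k=0$ form already has a negative direction), so one must separately show the $k=1$ operator is nonnegative, typically by exhibiting a positive radial solution of its Jacobi equation coming from an ambient Killing field, and then run the Sturm count for $k=0$. As written this part is a sketch rather than a proof, whereas the paper's citation makes the step legitimate as stated.
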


\begin{proof}(1) As state in Lemma \ref{lem:BSE-stability-half-catenoid},
the function $\xi(a, s)$ can change its sign at most
once on $(0,\infty)$ and $(-\infty,0)$ respectively. Observe
that the function $\xi(a, s)$ is even and that $\xi(a,0)=1$. To determine
whether $\xi$ has a zero, it suffices to look at its behaviour at infinity.

If $E(a)<0$, then $\int_{0}^{\infty}I(a,t)dt<0$, which implies that
$\xi(a,s)\to\infty$ as $s\to\pm\infty$, therefore $\xi(a,s)>0$
for all $s\in(-\infty,\infty)$.

If $E(a)=0$, we have the following equation
\begin{equation}\label{eq:variation Jacobi field when E(a)=0}
  \xi(a,s)=f(a,s)+\zeta(a,s)\int_{s}^{\infty}I(a,t)dt\ .
\end{equation}
By \eqref{eq:E=d0'}, \eqref{eq:derivative of d0} and
Lemma~\ref{lem:derivative of d0}, we can see that if $E(a)=0$, then
\begin{equation*}
   \cosh^{2}(2a)\leq\cosh^{2}(2A_{3})=
   \left(\frac{1+\sqrt{5}}{2}\right)^2<3\ ,
\end{equation*}
and so $I(a,t)>0$ if $t$ is large enough.
If $s$ is sufficiently large, then $\xi(a,s)>0$ according to
\eqref{eq:variation Jacobi field when E(a)=0},
thus $\xi(a,s)>0$ for all $s\in(-\infty,\infty)$. Therefore
$\CC_{a}$ is stable if $E(a)\leq{}0$.

(2) Recall that the variation Jacobi field  $\xi(a,s)$ can
change its sign at most once on either $(0,\infty)$ or $(-\infty,0)$
by Lemma~\ref{lem:BSE-stability-half-catenoid}.
Now suppose that $E(a)>0$, since $\xi(a,0)=1$ and $\xi(a,s)\to-\infty$
as $s\to\pm\infty$, we know that
$\xi(a,s)$ has exactly two symmetric zeros in $(-\infty,\infty)$,
which are denoted by $\pm{}z(a)$.
Let $\CC(z(a))$ be the subdomain of $\CC_a$ defined by
\begin{equation}\label{eq:portion of Pi}
  \CC(z(a))=\CC(a,[-z(a),z(a)])\ .
\end{equation}
Let $\phi(a,s)$ be restriction of $\xi(a,s)$ to $\CC(z(a))$, then
$\phi\in{}C_{0}^\infty(\CC(z(a)))$ and $\Lcal{}\phi=0$. This implies
that $\lambda_{1}(\CC(z(a)))\leq{}0$, which can imply that any compact
connected subdomain of $\CC_a$ containing $\CC(z(a))$ must be
unstable by Lemma~\ref{lem:monotonicity of eigenvalue}.

Therefore $\CC_a$ has index at least one.
By \cite[Theorem 4.3]{Seo11} or \cite[$\S$3.3]{Tuz93},
$\CC_a$ has index one.
\end{proof}

\subsection{Proof of Theorem~\ref{thm:BSE theorem}}

Now we are able to prove Theorem~\ref{thm:BSE theorem}.

\begin{figure}[htbp]
  \begin{center}
     \includegraphics[scale=0.8]{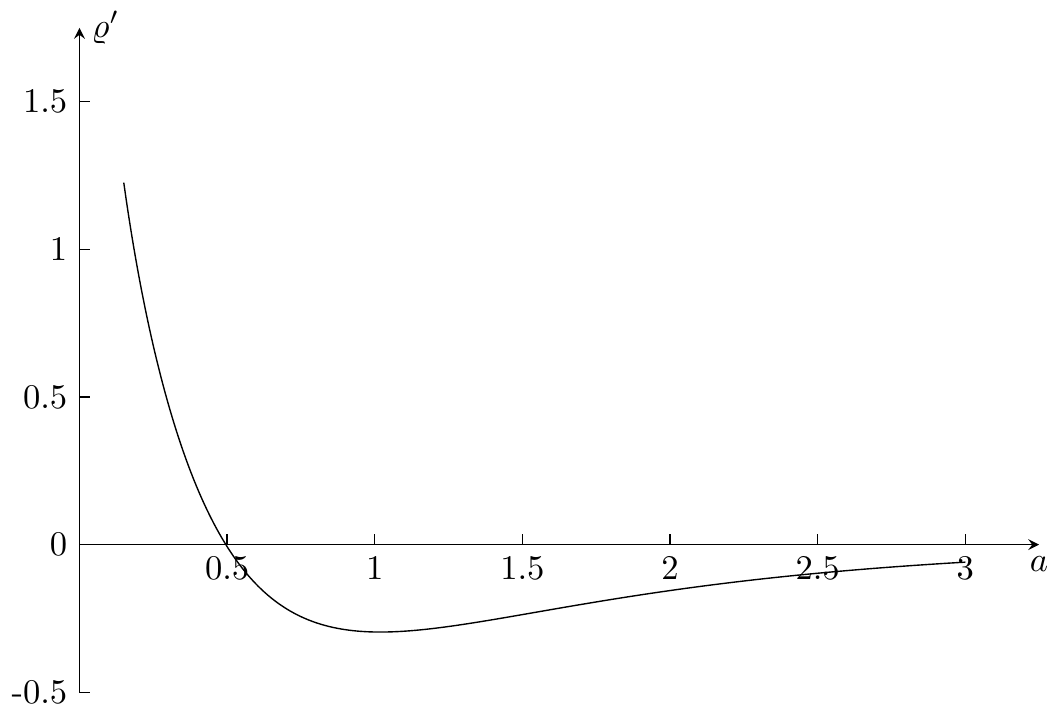}
  \end{center}
  \caption{The derivative of the function $\varrho(a)$ for
  $a\in(0,3]$. From the figure we can see that $\varrho'(a)$ has a unique zero,
  which is very close to $0.5$.}
  \label{fig:derivative of Gomes function}
\end{figure}

\begin{BSEtheorem}
There exists a constant $a_c\approx{}0.49577$ such that
the following statements are true:
\begin{enumerate}
     \item $\CC_a$ is an unstable minimal surface with Morse index
           one if $0<a<a_c$;
     \item $\CC_a$ is a globally stable minimal surface if
           $a\geq{}a_c$.
  \end{enumerate}
\end{BSEtheorem}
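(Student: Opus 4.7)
The plan is to reduce the theorem to a single monotonicity statement. By Theorem~\ref{thm:BSE-stability} combined with the identity $E(a)=\varrho'(a)/\sqrt{2}$ from \eqref{eq:E=d0'}, the stability of $\CC_a$ is controlled entirely by the sign of $\varrho'(a)$: the catenoid $\CC_a$ is globally stable when $\varrho'(a)\leq 0$, and unstable with Morse index one when $\varrho'(a)>0$. Consequently, to prove Theorem~\ref{thm:BSE theorem} it suffices to show that $\varrho'(a)$ has a \emph{unique} zero $a_c\in(0,\infty)$, with $\varrho'(a)>0$ on $(0,a_c)$ and $\varrho'(a)<0$ on $(a_c,\infty)$.

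First I would confirm that $\varrho'(a)>0$ for small $a>0$. Since $\varrho(0)=0$, $\varrho>0$ on $(0,\infty)$, and $\varrho(a)\to 0$ as $a\to\infty$ (facts established during the proof of Theorem~\ref{thm:Gomes1987-prop3.2-a}), $\varrho$ attains a positive interior maximum, so $\varrho'$ is positive somewhere in the interior and negative somewhere further out. I would then invoke the technical lemmas of Section~\ref{sec:technical lemmas}: one will assert that $\varrho'$ is strictly decreasing on the interval $(0,0.71555]$, and the other will assert that $\varrho'<0$ on $[0.53064,\infty)$. These two intervals overlap on $[0.53064,0.71555]$, where $\varrho'$ is simultaneously decreasing and strictly negative; together with positivity near $0$ and continuity, the intermediate value theorem produces exactly one zero $a_c\in(0,0.53064)$. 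The negativity on the entire tail $[0.53064,\infty)$ then rules out any further sign change, so $a_c$ is the unique zero and $\varrho'$ has the desired sign pattern.

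The main obstacle will be establishing the two monotonicity/sign lemmas about $\varrho'(a)$. These require careful estimates of the integral representation of $\varrho'$ obtained by differentiating \eqref{eq:Gomes function II} under the integral sign; the integrand is built from $I(a,t)$ as in \eqref{eq:def of I(a,t)}, whose numerator $n(\cosh(2a),\cosh(2t))=\cosh(2a)(3-\cosh^2(2a))\cosh^2(2t)+\cdots$ changes behavior as $a$ crosses the threshold $\cosh^2(2a)=3$. I would split the integral into a compact part and a tail, bound each piece via monotonicity of elementary hyperbolic expressions in $a$ and $t$, and track signs case by case to deduce both the descent on $(0,0.71555]$ and the negativity on $[0.53064,\infty)$.

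Once the unique critical number is located, numerical evaluation of $\varrho'$ gives $a_c\approx 0.49577$, and applying Theorem~\ref{thm:BSE-stability} on each side of $a_c$ yields both conclusions of Theorem~\ref{thm:BSE theorem}.
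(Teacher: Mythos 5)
Your proposal follows essentially the same route as the paper: reduce via \eqref{eq:E=d0'} and Theorem~\ref{thm:BSE-stability} to the sign of $\varrho'(a)$, then obtain a unique zero from the two technical lemmas (negativity of $\varrho'$ on $[A_3,\infty)$ and $\varrho''<0$, hence monotonicity of $\varrho'$, on $(0,A_4)$) together with positivity near $a=0$. The only cosmetic difference is that the paper gets positivity near zero by computing $\lim_{a\to 0^{+}}\varrho'(a)=\infty$ explicitly, whereas you infer it from the interior maximum of $\varrho$; both are fine.
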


\begin{proof}
Recall that we have $E(a)=\varrho'(a)/\sqrt{2}$ by
\eqref{eq:E=d0'}. We claim that $\varrho'(a)$
satisfies the following conditions
(see Figure~\ref{fig:derivative of Gomes function}):
\begin{itemize}
   \item $\varrho'(a)\to\infty$ as $a\to{}0^{+}$ and
         $\varrho'(a)<0$ on $[A_3,\infty)$, and
   \item $\varrho'(a)$ is decreasing on $(0,A_4)$,
\end{itemize}
where $A_3<A_4$ are constants defined in
\eqref{eq:constant Lambda-3} and \eqref{eq:constant Lambda-4}.
These conditions can imply that $\varrho'$ has a unique zero
$a_c\in(0,\infty)$ such that $\varrho'(a)>0$ if $0<a<a_c$ and
$\varrho'(a)<0$ if $a>a_c$, hence together with
Theorem~\ref{thm:BSE-stability}, the theorem follows.

Next let's prove the above claim. It's easy to verify that
\begin{equation}\label{eq:derivative of d0}
   \varrho'(a)=\int_{0}^{\infty}
      \frac{\sinh(a+t)
      (5\cosh^2(a+t)-\cosh^{2}(3a+t))}
      {\cosh^2(a+t)\sqrt{\sinh(2t)\sinh^{3}(4a+2t)}}\,dt\ .
\end{equation}
By Lemma \ref{lem:derivative of d0}, $\varrho'(a)<0$ on
$(A_3,\infty)$. Now let
\begin{equation}
   h(a,t)=\frac{\sinh(a+t)
      (5\cosh^2(a+t)-\cosh^{2}(3a+t))}
      {\cosh^2(a+t)\sqrt{\sinh(2t)\sinh^{3}(4a+2t)}}\ .
\end{equation}
Then for any fixed constant $a>0$, we have the estimates
\begin{equation}
   h(a,t)\sim{}C_{1}(a)\left(
   \sqrt{\sinh{}t}+\frac{\cosh{}t}{\sqrt{\sinh{}t}}\right)\ ,
   \quad\text{as}\ t\to{}0\ ,
\end{equation}
and
\begin{equation}
   h(a,t)\sim
   \frac{C_{2}(a)}{\cosh{}t\cosh(2a+t)\sinh(4a+2t)}\ ,
   \quad\text{as}\ t\to\infty\ .
\end{equation}
Hence $\varrho'(a)$ is well defined for $a>0$, and then
\begin{align*}
   \lim_{a\to{}0^{+}}\varrho'(a)
    &=\int_{0}^{\infty}\frac{1}{\sinh{}t\cosh^{2}t}\,dt\\
    &=\left[\log\left(\frac{\cosh{}t-1}{\cosh{}t+1}\right)+
      \frac{1}{\cosh{}t}\right]_{t=0}^{t=\infty}=\infty\ .
\end{align*}
Next, we have
\begin{equation}\label{eq:2nd derivative of d0}
   \varrho''(a)=\int_{0}^{\infty}\frac{\psi(a,t)}
   {16\cosh^3(a+t)\sqrt{\sinh(2t)\sinh^{5}(4a+2t)}}\,dt\ ,
\end{equation}
where $\psi(a,t)$ is the function defined  by \eqref{eq:psi(t,l)}.
By the result in Lemma \ref{lem:second derivative of d[lambda]},
$\varrho''(a)<0$ for $a\in(0,A_4)$, thus
$\varrho'(a)$ is decreasing on $(0,A_4)$.
\end{proof}

\section{Least Area Spherical Catenoids}
\label{sec:least area catenoid}

In this section, we will prove Theorem \ref{thm:main theorem}.
The following estimate is crucial to the proof of
Theorem \ref{thm:main theorem}.

\begin{lemma}\label{lem:area compare}
For all real numbers $a>0$, consider the functions
\begin{equation}\label{eq: area difference function}
   f(a)=\int_{a}^{\infty}\sinh{}t\cdot
   \left(\frac{\sinh(2t)}{\sqrt{\sinh^2(2t)-\sinh^2(2a)}}
   -1\right)dt\ ,
\end{equation}
and $g(a)=\cosh{}a-1$, then we have the following results:
\begin{enumerate}
   \item $f(a)$ is well defined for each fixed
         $a\in(0,\infty)$.
   \item $f(a)<g(a)$ for sufficiently large $a$.
\end{enumerate}
\end{lemma}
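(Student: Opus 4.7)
The plan is to verify each assertion using a single change of variables that linearizes the square root, together with crude bounds. For (1), I would check integrability at the two potentially delicate points. Near $t = a$, factoring $\sinh^2(2t) - \sinh^2(2a) = \sinh(2t-2a)\sinh(2t+2a)$ exhibits a $(t-a)^{-1/2}$ singularity, which is integrable. Near $t = \infty$, rationalizing gives
$$\frac{\sinh(2t)}{\sqrt{\sinh^2(2t) - \sinh^2(2a)}} - 1 = \frac{\sinh^2(2a)}{\sqrt{\sinh^2(2t)-\sinh^2(2a)}\,\bigl(\sinh(2t) + \sqrt{\sinh^2(2t)-\sinh^2(2a)}\bigr)},$$
which is $O(e^{-4t})$, so the integrand is $O(e^{-3t})$ and the integral converges.

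For (2), the key step is the substitution $\sinh(2t) = \sinh(2a)\cosh\psi$, with $\psi$ running over $[0,\infty)$. Then $\sqrt{\sinh^2(2t) - \sinh^2(2a)} = \sinh(2a)\sinh\psi$ (removing the singularity), the bracket becomes $\coth\psi - 1 = e^{-\psi}/\sinh\psi$, and using $\sinh t = \sqrt{(\cosh(2t)-1)/2}$ together with $dt = \sinh(2a)\sinh\psi/(2\cosh(2t))\,d\psi$, one arrives at the clean representation
$$f(a) = \frac{\sinh(2a)}{2\sqrt{2}}\int_0^\infty \frac{\sqrt{\cosh(2t)-1}}{\cosh(2t)}\,e^{-\psi}\,d\psi, \qquad \cosh(2t) = \sqrt{1+\sinh^2(2a)\cosh^2\psi}.$$
Applying the elementary bounds $\sqrt{c-1}/c \le 1/\sqrt{c}$ and $\cosh(2t) \ge \sinh(2a)\cosh\psi$, together with $\cosh\psi \ge e^{\psi}/2$, yields
$$f(a) \le \frac{\sqrt{\sinh(2a)}}{2\sqrt{2}}\int_0^\infty \frac{e^{-\psi}}{\sqrt{\cosh\psi}}\,d\psi \le \frac{\sqrt{\sinh(2a)}}{2\sqrt{2}}\cdot\sqrt{2}\int_0^\infty e^{-3\psi/2}\,d\psi = \frac{\sqrt{\sinh(2a)}}{3}.$$
Since $\sqrt{\sinh(2a)}\sim e^a/\sqrt{2}$ and $g(a)=\cosh a-1\sim e^a/2$ as $a\to\infty$, the ratio $\sqrt{\sinh(2a)}/(3g(a))$ tends to $\sqrt{2}/3 < 1$, so $f(a)\le \sqrt{\sinh(2a)}/3 < g(a)$ for all sufficiently large $a$, which is (2).

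The main obstacle is spotting the substitution $\sinh(2t) = \sinh(2a)\cosh\psi$: once available, it simultaneously desingularizes the integrand at $t=a$, extracts the decaying exponential $e^{-\psi}$ from the bracket, and concentrates the $a$-dependence inside a single factor of $\sinh(2a)$, after which the remaining estimates are essentially automatic. Without this change of variables, the competition between the growing prefactor $\sinh(2a)$ hidden in the bracket and the decaying tail of the integrand would be much harder to control uniformly in $a$.
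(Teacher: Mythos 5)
Your argument is correct, but it follows a genuinely different route from the paper. You substitute $\sinh(2t)=\sinh(2a)\cosh\psi$ to desingularize the integrand and isolate the decay factor $e^{-\psi}$, then apply crude pointwise bounds to obtain $f(a)\leq\tfrac{1}{3}\sqrt{\sinh(2a)}$, and conclude by comparing growth rates, since $\sqrt{\sinh(2a)}/(3g(a))\to\sqrt{2}/3<1$. The paper instead shifts $t\mapsto t+a$ and exploits the fact that the kernel $\Phi(a,t)=\sinh(2a+2t)/\sqrt{\sinh^2(2a+2t)-\sinh^2(2a)}$ is monotone increasing in $a$, hence bounded by its limit $1/\sqrt{1-e^{-4t}}$; combined with $\sinh(a+t)<e^{t}\cosh a$ this yields the uniform bound $f(a)<K\cosh a$ with the explicit constant $K=\int_0^1 x^{-2}\bigl(1/\sqrt{1-x^4}-1\bigr)dx\approx 0.40093<1$, from which the paper extracts the explicit threshold $a_l=\cosh^{-1}\bigl(1/(1-K)\bigr)\approx 1.10055$ defined in \eqref{eq:lambda_l}. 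Both arguments prove the lemma as stated; what the paper's version buys is the sharper asymptotic constant ($K\approx 0.401$ versus your $\sqrt{2}/3\approx 0.471$) and, more importantly, the specific value $a_l$ that Theorem~\ref{thm:main theorem} quotes — your bound would produce a somewhat larger threshold if you solved $\tfrac13\sqrt{\sinh(2a)}=\cosh a-1$ numerically, so if you want to recover the paper's $a_l$ exactly you would still need the monotonicity-in-$a$ comparison. A minor additional merit of your write-up is that you verify part (1) directly at both endpoints, whereas the paper obtains integrability only implicitly through its uniform bound.
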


\begin{proof}(1) Using the substitution $t\to{}t+a$, we have
\begin{equation*}
   f(a)=\int_{0}^{\infty}\sinh(a+t)
   \left(\frac{\sinh(2a+2t)}{\sqrt{\sinh^2(2a+2t)-
   \sinh^2(2a)}}-1\right)dt\ .
\end{equation*}
We will prove that $f(a)<K\cosh{}a$, where
\begin{equation}\label{eq:limit of the ratio}
   K=\int_{0}^{1}\frac{1}{x^2}
          \left(\frac{1}{\sqrt{1-x^4}}-1\right)dx
\end{equation}
is a constant between $0$ and $1$.

Let $\displaystyle\Phi(a,t)=\frac{\sinh(2a+2t)}
{\sqrt{\sinh^2(2a+2t)-\sinh^2(2a)}}$, then
for any fixed $t\in[0,\infty)$, it's easy to
verify that $\Phi(a,t)$ is increasing on $[0,\infty)$
with respect to $a$. So
we have the estimate
\begin{align*}
   \Phi(a,t)   &\leq\lim_{a\to\infty}
                    \frac{\sinh(2a+2t)}
                    {\sqrt{\sinh^2(2a+2t)-\sinh^2(2a)}}\\
               &=\frac{\sinh(2t)+\cosh(2t)}{\sqrt{(\sinh(2t)+
                 \cosh(2t))^2-1}}\\
               &=\frac{e^{2t}}{\sqrt{e^{4t}-1}}
                =\frac{1}{\sqrt{1-e^{-4t}}}\ .
\end{align*}
Besides,
$\sinh(a+t)<(\sinh{}t+\cosh{}t)\cosh{}a=e^{t}\cosh{}a$,
therefore we have the following estimate
\begin{align*}
    f(a)
     &<\cosh{}a\int_{0}^{\infty}e^t\left(\frac{e^{2t}}
       {\sqrt{e^{4t}-1}}-1\right)dt\\
     &=\cosh{}a\int_{0}^{\infty}e^t\left(\frac{1}
       {\sqrt{1-e^{-4t}}}-1\right)dt\\
     &=\cosh{}a\int_{0}^{1}\frac{1}{x^2}
       \left(\frac{1}{\sqrt{1-x^4}}-1\right)dx
       \quad (t\mapsto{}x=e^{-t})
\end{align*}
Since $x^2+1\geq{}1$, we have
\begin{equation*}
\begin{aligned}
   K&=\int_{0}^{1}\frac{1}{x^2}\left(\frac{1}{\sqrt{1-x^4}}-1\right)dx\\
    &<\int_{0}^{1}\frac{1}{x^2}\left(\frac{1}{\sqrt{1-x^2}}-1\right)dx
     =1 \ ,
    \end{aligned}
\end{equation*}
where we use the substitution $x\to{}\sin{}x$ to evaluate the above
integral.

(2) We have proved that
$f(a)<K\cosh{}a$ for any $a\in[0,\infty)$. Let
\begin{equation}\label{eq:lambda_l}
   a_{l}=\cosh^{-1}\left(\frac{1}{1-K}\right)\ ,
\end{equation}
then $f(a)<g(a)$ if $a\geq{}a_l$.
\end{proof}

\begin{remark}The function $f(a)$ in
\eqref{eq: area difference function} has its geometric meaning:
$2\pi{}f(a)$ is the difference of the \emph{infinite area}
of one half of the catenoid $\CC_a$ and that of the annulus
\begin{equation*}
   \mathcal{A}=\{(0,v,w)\in\B^3\ |\ \tanh(a/2)
   \leq\sqrt{v^2+w^2}<1\}\ .
\end{equation*}
\end{remark}

\begin{remark}
The first definite integral in \eqref{eq:limit of the ratio} is an elliptic
integral. By the numerical computation, $K\approx{}0.40093$, and hence
$a_l\approx{}1.10055$.
\end{remark}

We need the coarea formula that will be used in the proof of
Theorem \ref{thm:main theorem}. The proof of
\eqref{eq:coarea formula I} in Lemma \ref{lem:coarea formula}
can be found in \cite{Wang10}.

\begin{lemma}[Calegari and Gabai\ {\cite[$\S$1]{CG06}}]
\label{lem:coarea formula}
Suppose $\Sigma$ is a surface in the hyperbolic $3$-space
$\B^3$. Let $\gamma\subset\B^3$ be a geodesic,
for any point $q\in\Sigma$, define $\theta(q)$ to be
the angle between the tangent space to $\Sigma$ at $q$, and
the radial geodesic that is through $q$
{\rm(}emanating from $\gamma${\rm)}
and is perpendicular to $\gamma$. Then
\begin{equation}\label{eq:coarea formula I}
   \Area(\Sigma\cap\Nscr_{s}(\gamma))=
   \int_{0}^{s}\int_{\Sigma\cap\partial\Nscr_{t}(\gamma)}
   \frac{1}{\cos\theta}\,dldt\ ,
\end{equation}
where $\Nscr_{s}(\gamma)$ is the hyperbolic $s$-neighborhood
of the geodesic $\gamma$.
\end{lemma}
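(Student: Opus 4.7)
The plan is to interpret the identity as the smooth coarea formula applied to the restriction of the ambient distance $d(q)=\dist(q,\gamma)$ to $\Sigma$.

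First, I would set up the geometric objects. On $\B^{3}\setminus\gamma$ the function $d$ is smooth, its ambient gradient $\nabla d$ is the unit vector at $q$ tangent to the unique geodesic ray emanating perpendicularly from $\gamma$ through $q$, and the level sets $\{d=t\}$ coincide with the equidistant tubes $\partial\Nscr_{t}(\gamma)$. In particular, $\Sigma\cap\partial\Nscr_{t}(\gamma)$ is exactly the level set $(d|_{\Sigma})^{-1}(t)$, and $\Sigma\cap\Nscr_{s}(\gamma)=\{q\in\Sigma:d(q)\leq s\}$, so both sides of the claimed identity are measurements of the same set decomposed in two different ways.

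The main geometric step is to identify $|\nabla_{\Sigma}d|$ with $\cos\theta$. At a point $q\in\Sigma\setminus\gamma$ with unit normal $N$, the orthogonal decomposition $\nabla d=\nabla_{\Sigma}d+\inner{\nabla d}{N}N$ combined with $|\nabla d|=1$ gives $|\nabla_{\Sigma}d|^{2}=1-\inner{\nabla d}{N}^{2}$. Since $\theta(q)$ is by definition the angle between the radial direction $\nabla d$ and the tangent plane $T_{q}\Sigma$, the angle between $\nabla d$ and $N$ equals $\pi/2-\theta(q)$, so $\inner{\nabla d}{N}=\pm\sin\theta$ and therefore $|\nabla_{\Sigma}d|=\cos\theta(q)$.

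With this identification in hand I would invoke the coarea formula for the Lipschitz function $d|_{\Sigma}$: for every nonnegative measurable $F$ on $\Sigma$,
\[
\int_{\Sigma}F(q)\,|\nabla_{\Sigma}d|(q)\,dA(q)=\int_{0}^{\infty}\int_{\Sigma\cap\{d=t\}}F\,dl\,dt.
\]
Substituting $F=\mathbf{1}_{\{d\leq s\}}/\cos\theta$, extended by zero on the critical set $\{\cos\theta=0\}$, the left-hand side collapses to $\Area(\Sigma\cap\Nscr_{s}(\gamma))$ while the right-hand side becomes $\int_{0}^{s}\int_{\Sigma\cap\partial\Nscr_{t}(\gamma)}\frac{1}{\cos\theta}\,dl\,dt$, yielding \eqref{eq:coarea formula I}.

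The only technical subtlety, and hence the main obstacle, is the critical set $C=\{q\in\Sigma:\cos\theta(q)=0\}$, where $d|_{\Sigma}$ fails to be a submersion and the integrand $1/\cos\theta$ is undefined. As long as $\Sigma$ is not contained in any equidistant tube of $\gamma$ (which is always the case in the minimal-surface applications of this paper), $C$ has two-dimensional measure zero in $\Sigma$, and Sard's theorem guarantees that $\Sigma\cap\partial\Nscr_{t}(\gamma)$ is a smooth one-dimensional submanifold for almost every $t\in[0,s]$. This regularity is exactly what is needed to justify the substitution above, and once it is verified the remainder of the argument is purely the identification $|\nabla_{\Sigma}d|=\cos\theta$ plus one appeal to coarea.
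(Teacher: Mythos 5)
Your argument is the standard coarea proof and it is essentially correct; note, though, that the paper never proves this lemma internally at all --- it attributes it to Calegari--Gabai \cite{CG06} and points to \cite{Wang10} for the proof of \eqref{eq:coarea formula I} --- so what you have written supplies the argument the paper only cites: the gradient of $d(\cdot)=\dist(\cdot,\gamma)$ is the unit radial field, the level sets of $d$ are the tubes $\partial\Nscr_{t}(\gamma)$, the identification $|\nabla_{\Sigma}d|=\cos\theta$, and then the coarea (Federer/Tonelli) formula with $F=\mathbf{1}_{\{d\leq s\}}/\cos\theta$. That is exactly how the cited sources obtain the identity, and your reduction is sound.

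One refinement is needed in your treatment of the critical set $C=\{q\in\Sigma:\cos\theta(q)=0\}$. The condition ``$\Sigma$ is not contained in any equidistant tube'' does not by itself force $C$ to have measure zero for an arbitrary smooth surface: $\Sigma$ could contain an open patch of some tube $\partial\Nscr_{t_{0}}(\gamma)$ without lying in it, and on such a patch the identity \eqref{eq:coarea formula I} genuinely fails (the patch contributes positive area to the left side but meets only the single level $t=t_{0}$, hence is invisible to the right side). What actually justifies the step in the setting where the paper uses the lemma ($\S$\ref{sec:least area catenoid}, where $\Sigma$ is a compact subdomain of a catenoid $\CC_a$) is real-analyticity: a minimal surface in $\H^{3}$ is real-analytic, the function $|\nabla_{\Sigma}d|^{2}$ is real-analytic on $\Sigma\setminus\gamma$, and it is not identically zero on any component since no minimal surface lies in an equidistant tube (tubes have nonzero mean curvature); hence its zero set is a proper analytic subvariety and has two-dimensional measure zero. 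With that substitution in place of your measure-zero claim, the rest of your argument --- Sard's theorem giving smooth level curves for a.e.\ $t$, and Tonelli for the nonnegative integrand --- goes through as written.
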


\begin{figure}[htbp]
  \begin{center}
     \includegraphics[scale=0.9]{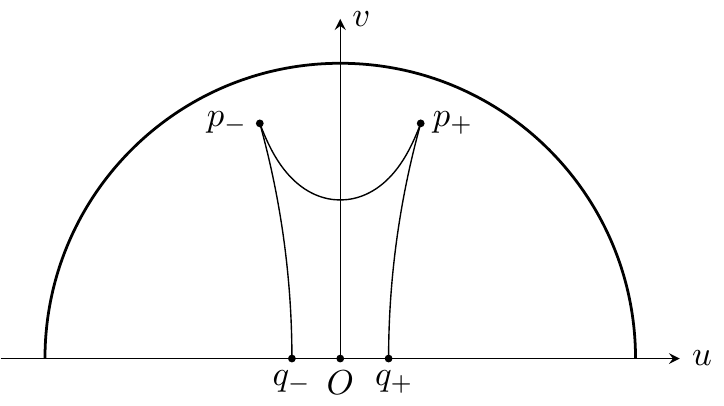}
  \end{center}
  \caption{The curve $\wideparen{p_{-}p_{+}}=\Sigma\cap\B_{+}^2$ is
  the portion of the catenary $\sigma_a$ with $a=1.2$.
  The curves $\wideparen{p_{+}q_{+}}=P_{+}\cap\B_{+}^2$ and
  $\wideparen{p_{-}q_{-}}=P_{-}\cap\B_{+}^2$.
  In this figure, $y_1=2.4$ and $x_1\approx{}0.330439$. By numerical
  computation: $\Area(\Sigma)=54.6636$ and
  $\Area(P_{+}\cup{}P_{-})=57.2643$.}
  \label{fig:least area catenoid--lambda=1.2}
\end{figure}

Now we are able to prove Theorem \ref{thm:main theorem}.

\begin{maintheorem}
There exists a constant $a_l\approx{}1.10055$ defined by
\eqref{eq:lambda_l} such that for any
$a\geq{}a_l$ the catenoid $\CC_a$ is a least area
minimal surface in the sense of Meeks and Yau.
\end{maintheorem}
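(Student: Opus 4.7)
The plan is to verify, for any compact annulus-type subdomain $\Sigma$ of $\CC_a$ with $a\geq a_l$, the two conditions in Meeks and Yau's definition of a least area annulus. If $\Sigma$ is bounded by rotationally symmetric round circles $C_+$ and $C_-$, then I must establish (i) $\Area(\Sigma)\leq \Area(S')$ for every annulus $S'$ with $\partial S'=\partial \Sigma$, and (ii) $\Area(\Sigma)<\Area(D_+)+\Area(D_-)$, where $D_{\pm}$ is the least area minimal disk bounded by $C_{\pm}$. By rotational symmetry, each $D_\pm$ is the hyperbolic disk lying in the totally geodesic plane perpendicular to the rotation axis at the appropriate axial level.

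For condition (ii), I first treat the symmetric case $\Sigma=\CC(a,[-s_1,s_1])$ with $y_1=y(a,s_1)$. Using the rotational area element, $\Area(\Sigma)=4\pi\int_0^{s_1}\sinh(y(a,s))\,ds$ and $\Area(D_+)+\Area(D_-)=4\pi(\cosh(y_1)-1)$. Changing variable $s\mapsto t=y(a,s)$ via \eqref{eq:arc length of catenary} converts the area integral into
\[
\Area(\Sigma)=4\pi\int_a^{y_1}\frac{\sinh(t)\sinh(2t)}{\sqrt{\sinh^2(2t)-\sinh^2(2a)}}\,dt.
\]
Since $\cosh(y_1)-1=(\cosh(a)-1)+\int_a^{y_1}\sinh(t)\,dt$, condition (ii) reduces to
\[
\int_a^{y_1}\sinh(t)\left(\frac{\sinh(2t)}{\sqrt{\sinh^2(2t)-\sinh^2(2a)}}-1\right)dt<\cosh(a)-1.
\]
The left-hand side is monotonically increasing in $y_1$ with supremum $f(a)$, and Lemma~\ref{lem:area compare} gives $f(a)<g(a)=\cosh(a)-1$ whenever $a\geq a_l$. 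The asymmetric case follows by splitting $\Sigma$ at its waist and applying the estimate to each half independently.

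For condition (i), I exploit the global stability of $\CC_a$, which is assured by Theorem~\ref{thm:BSE theorem} since $a\geq a_l>a_c$. The plan is a calibration argument: construct a minimal-surface foliation of a region of $\B^3$ containing $\Sigma$, either by varying the parameter $b$ in the nearby catenoid family $\{\CC_b\}$ (after verifying that neighboring leaves are disjoint) or by flowing along the positive Jacobi field provided by Theorem~\ref{thm:FCS80}. The leaf-wise unit normal field is divergence-free, hence it defines a closed calibration form $\omega$ on the foliated region with $|\omega|\leq 1$ and $\omega|_{\CC_a}$ equal to the area form of $\CC_a$. For any competitor $S'$ with $\partial S'=\partial\Sigma$ lying in the calibrated region, Stokes' theorem gives
\[
\Area(\Sigma)=\int_\Sigma \omega=\int_{S'}\omega \leq \Area(S').
\]

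The main obstacle is handling competitor annuli $S'$ that stray outside the calibrated neighborhood of $\CC_a$. To overcome this I would first produce a least area annular representative $S_0'$ in the isotopy class of $\Sigma$ via the Meeks--Yau solution of the annulus Plateau problem, then use the maximum principle applied to $S_0'$ against nearby leaves of the foliation to confine $S_0'$ within the calibrated region, forcing $S_0'$ to coincide with $\Sigma$. Constructing the foliation rigorously and executing the confinement step are the most delicate technical components; in contrast, condition (ii) is a direct calculus comparison whose analytic content is already packaged in Lemma~\ref{lem:area compare}.
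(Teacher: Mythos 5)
Your treatment of condition (ii) is essentially the paper's: the coarea/rotational-area computation reducing the disk comparison to $\int_a^{y_1}\sinh t\,\bigl(\sinh(2t)/\sqrt{\sinh^2(2t)-\sinh^2(2a)}-1\bigr)\,dt<\cosh a-1$ and then invoking Lemma~\ref{lem:area compare} is exactly Claim 1 of the paper's proof, and your splitting of an asymmetric annulus at the waist is a harmless variant. The existence of a least area annular competitor via Meeks--Yau (using (ii) as the Douglas-type condition) is also in line with the paper.

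The genuine gap is in your condition (i). The calibration you propose cannot be constructed where it is needed: by Proposition~\ref{thm:Berard-Sa Earp}(2) the family $\{\CC_\alpha\}_{\alpha\geq a_c}$ foliates only the region between $\CC_{a_c}$ and the ideal boundary, while the catenoids with $\alpha<a_c$ intersect one another and form an envelope (Proposition~\ref{thm:small maximally weakly stable domain}), so there is no minimal foliation --- hence no calibration form --- on the component of $\B^3\setminus\CC_{a_c}$ containing the axis; and a positive Jacobi field does not by itself generate a foliation by \emph{minimal} leaves. Worse, the maximum-principle sweep by the leaves $\CC_\alpha$ (together with the Meeks--Yau exchange argument) confines the least area competitor $S_0'$ to $\T_a\cap\Omega$, i.e.\ precisely to the axis side of $\CC_a$, part of which lies inside $\CC_{a_c}$ where there are no leaves to serve as barriers; so your proposed confinement step fails exactly where the difficulty sits, and Stokes' theorem only controls competitors that never leave the calibrated band. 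The paper closes this hole differently: it uses the Meeks--Yau reflection (folding-curve) argument to show the least area competitor is a surface of revolution symmetric about the $vw$-plane, hence a sub-annulus of some $\CC_{a'}$ with $a'\leq a$; if $a'<a$ then $\varrho(a')<\varrho(a)<\varrho(a_c)$ forces $a'<a_c$ by Proposition~\ref{thm:Berard-Sa Earp}, and by Proposition~\ref{thm:small maximally weakly stable domain} such a competitor contains the maximal weakly stable domain $\CC(z(a'))$ of the unstable catenoid $\CC_{a'}$, hence is unstable --- contradicting least area. Some substitute for this symmetrization-plus-instability (envelope) analysis is the missing idea in your proposal.
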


\begin{proof}
First of all, suppose that $a\geq{}a_{l}$ is an
arbitrary constant.
Suppose that $\partial_{\infty}\CC_{a}=C_1\cup{}C_2$, and
let $P_{i}$ be the geodesic plane asymptotic to $C_{i}$
($i=1,2$). Let $\sigma_{a}=\CC_a\cap\B_{+}^2$
be the generating curve of the
catenoid $\CC_a$.

For $x\in(-\varrho(a),\varrho(a))$, where $\varrho(a)$ is defined
by \eqref{eq:Gomes function I}, let $P(x)$ be the
geodesic plane perpendicular to the $u$-axis such
that $\dist(O,P(x))=|x|$. Now let
\begin{equation}\label{eq:symmetric compact annulus}
   \Sigma=\bigcup_{|x|\leq{}x_{1}}
   \left(\CC_{a}\cap{}P(x)\right)\ ,
\end{equation}
for some $0<x_{1}<\varrho(a)$.
Let $\partial\Sigma=C_{+}\cup{}C_{-}$. Note that $C_+$ and $C_-$ are
coaxial with respect to the $u$-axis. 

\begin{claim}\label{claim:one}
$\Area(\Sigma)<\Area(P_{+})+\Area(P_{-})$, where
$P_{\pm}$ are the compact subdomains of $P(\pm{}x_{1})$ that are
bounded by $C_{\pm}$ respectively
(see Figure~\ref{fig:least area catenoid--lambda=1.2}).
\end{claim}

\begin{proof}[\bf{Proof of Claim \ref{claim:one}}]
Recall that $P_{\pm}$ are two (totally) geodesic disks with
hyperbolic radius $y_{1}$, so the area of $P_{\pm}$ is given by
\begin{equation}
   \Area(P_{+})=\Area(P_{-})=4\pi\sinh^{2}\left(
   \frac{y_{1}}{2}\right)=2\pi(\cosh{}y_{1}-1)\ ,
\end{equation}
where $(x_1,y_1)\in\sigma_a$ satisfies the equation
\eqref{eq: catenary equation}.

Recall that
$\Area(\Sigma)=\Area(\Sigma\cap\Nscr_{y_{1}}(\gamma_{0}))$,
by the co-area formula we have
\begin{equation}
  \Area(\Sigma)=\int_{a}^{y_{1}}\left(
  \Length(\Sigma\cap\partial\Nscr_{t}(\gamma_{0}))\cdot
  \frac{1}{\cos\theta}\right)dt\ ,
\end{equation}
where the angle $\theta$ is given by \eqref{eq:angle-alpha}, hence
\begin{equation}
  \Area(\Sigma)=\int_{a}^{y_{1}}\left(4\pi\sinh{}t\cdot
  \frac{\sinh(2t)}{\sqrt{\sinh^2(2t)-\sinh^2(2a)}}\right)dt\ .
\end{equation}
By Lemma \ref{lem:area compare}, for any
$a\geq{}a_{l}$ we have
\begin{equation*}
   \int_{a}^{\infty}\sinh{}t\cdot
   \left(\frac{\sinh(2t)}{\sqrt{\sinh^2(2t)-\sinh^2(2a)}}-1\right)dt
   <\cosh{}a-1\ ,
\end{equation*}
therefore for any $y_1\in(a,\infty)$ we have
\begin{equation*}
   4\pi\int_{a}^{y_1}\sinh{}t\cdot\left(\frac{\sinh(2t)}
   {\sqrt{\sinh^2(2t)-\sinh^2(2a)}}-1\right)dt
   <4\pi(\cosh{}a-1)\ ,
\end{equation*}
and then $\Area(\Sigma)<\Area(P_{+})+\Area(P_{-})$.
\end{proof}

\begin{claim}\label{claim:two}
There is no minimal annulus with the same boundary
as that of $\Sigma$ which has smaller area than that of $\Sigma$.
\end{claim}

\begin{proof}[\bf{Proof of Claim \ref{claim:two}}]
Recall that $a$ is an arbitrary constant chosen to be $\geq{}a_{l}$.
We need two notations:
\begin{itemize}
  \item $\Omega$ denotes the subregion of $\B^3$ bounded by
        $P(-x_{1})$ and $P(x_{1})$, and
  \item $\T_a$ denotes the simply connected
        subregion of $\B^3$ bounded  by $\CC_a$.
\end{itemize}

Assume that $\Sigma'$ is a least area annulus with the same boundary
as that of $\Sigma$, and $\Area(\Sigma')<\Area(\Sigma)$. Since
$\Sigma'$ is a least area annulus, it must be a minimal surface.
By \cite[Theorem 5]{MY1982(t)} and
\cite[Theorem 1]{MY1982(mz)}, $\Sigma'$ must be contained in
$\Omega$, otherwise we can use cutting and pasting technique
to get a minimal surface contained in $\Omega$ that has
smaller area. Furthermore, recall that
$\{\CC_\alpha\}_{\alpha\geq{}a_c}$ locally foliates
$\Omega\subset\B^3$, therefore $\Sigma'$ must be contained
in $\T_a\cap\Omega$ by the Maximum Principle. It's easy to
verify that the boundary of $\T_a\cap\Omega$ is given by
$\partial(\T_a\cap\Omega)=\Sigma\cup{}P_{+}\cup{}P_{-}$.

Now we claim that $\Sigma'$ is symmetric about any geodesic
plane that passes through the $u$-axis, i.e., $\Sigma'$ is
a surface of revolution. Otherwise, using the reflection
along the geodesic planes that pass through the $u$-axis,
we can find another annulus $\Sigma''$ with
$\partial\Sigma''=\partial\Sigma'$ such that either
$\Area(\Sigma'')<\Area(\Sigma')$ or $\Sigma''$ contains
\emph{folding curves} 
so that we can find smaller area annulus by
the argument in \cite[pp. 418--419]{MY1982(t)}.
Similarly, $\Sigma'$ is symmetric about the $vw$-plane.

Now let $\sigma'=\Sigma'\cap\B_{+}^{2}$, then $\sigma'$
must satisfy the
equations \eqref{eq:differential equations of Pi} for some constant
$a'>0$, which may imply that $\Sigma'$ is a compact subdomain
of some catenoid $\CC_{a'}$ (see
Figure~\ref{fig:least area catenoid--lambda=1.2 and 0.14341}).
Obviously $\CC_{a'}\cap\CC_{a}=C_{+}\cup{}C_{-}$.

\begin{figure}[htbp]
  \begin{center}
     \includegraphics[scale=0.9]{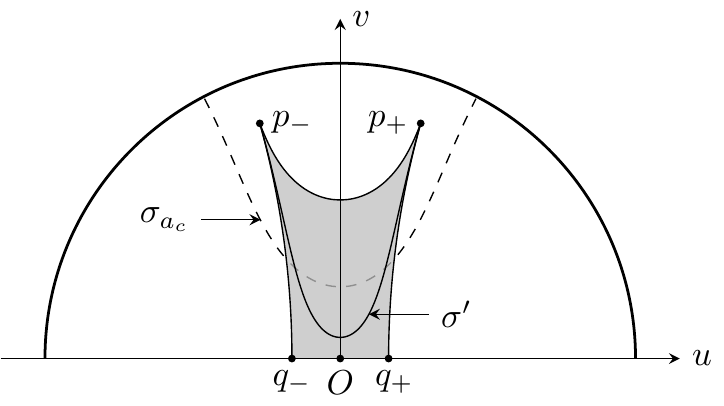}
  \end{center}
  \caption{The shaded region is equal to
  $(\T_a\cap\Omega)\cap\B^{2}_{+}$.
  If $\Sigma'\ne\Sigma$, then $\sigma'=\Sigma'\cap\B_{+}^{2}$ is the
  portion of some catenary $\sigma_{a'}$ with
  $a'<a_c$ and then $\Sigma'$ is unstable.}
  \label{fig:least area catenoid--lambda=1.2 and 0.14341}
\end{figure}

Since $\Sigma'\subset\T_a\cap\Omega$, we have $a'\leq{}a$.
We claim that if $a'<a$, that is $\Sigma'$ is not the same
as $\Sigma$, then it must be unstable, which implies a contradiction.
In fact, if $a'<a$, since $\varrho(a')<\varrho(a)<\varrho(a_c)$,
we have $a'<a_c$ by Proposition~\ref{thm:Berard-Sa Earp}, which
implies that $\CC_{a'}$ is unstable. Besides, according to
Proposition \ref{thm:small maximally weakly stable domain},
the subdomain $\Sigma'$ of $\CC_{a'}$ is also unstable since
$\Sigma'$ contains the maximal weakly stable domain of $\CC_{a'}$,
so it couldn't be a least area
minimal surface unless $\Sigma'\equiv\Sigma$.

Therefore any compact annulus of the form
\eqref{eq:symmetric compact annulus}
is a least area minimal surface.
\end{proof}

Now let $S$ be any compact domain of $\CC_a$, then we
always can find a compact annulus $\Sigma$ of the form
\eqref{eq:symmetric compact annulus} such that $S\subset\Sigma$.
If $S$ is not a least area minimal surface, then we can
use the cutting and
pasting technique to show that $\Sigma$ is not a least
area minimal surface.
This is contradicted to the above argument.

Therefore if $a\geq{}a_l$, then $\CC_a$ is a least area minimal
surface in the sense of Meeks and Yau.
\end{proof}

\begin{remark}In the proof of \emph{Claim 2} in
Theorem \ref{thm:main theorem}, if $\Sigma'$ is an annulus type
minimal surface but it is not a least area
minimal surface, then it might not be a surface of revolution
(see \cite[p. 234]{Lop00}).
\end{remark}

\begin{corollary}
There exists a finite constant $a_{l}\approx{}1.10055$
such that for two disjoint circles $C_{1},C_{2}\subset{}S_{\infty}^{2}$,
if $d_{L}(C_{1},C_{2})\leq{}2\varrho(a_l)\approx{}0.72918$,
then there exist a least area spherical minimal catenoid $\CC$
which is asymptotic to $C_{1}\cup{}C_{2}$, where the function
$\varrho(a)$ is given by \eqref{eq:Gomes function I}.
\end{corollary}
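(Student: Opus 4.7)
The plan is to synthesize Theorem~\ref{thm:Gomes1987-prop3.2-a} with Theorem~\ref{thm:main theorem}, using the monotonicity of $\varrho$ established in the proof of Theorem~\ref{thm:BSE theorem}. First I would invoke that proof, which shows that $\varrho$ has a unique critical number $a_c$, is strictly increasing on $(0,a_c]$ and strictly decreasing on $[a_c,\infty)$, with $\varrho(a)\to 0$ as $a\to\infty$. Since $a_l\approx 1.10055 > a_c\approx 0.49577$, the restriction $\varrho|_{[a_l,\infty)}$ is a continuous strictly decreasing bijection onto $(0,\varrho(a_l)]$.

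Next, given two disjoint round circles $C_1,C_2\subset S_\infty^2$ with $d:=d_L(C_1,C_2)\in(0,2\varrho(a_l)]$, the intermediate value theorem applied to this bijection produces a unique $a\geq a_l$ with $2\varrho(a)=d$. By \eqref{eq:Gomes distance} together with Definition~\ref{def:sphereical catenoid with parameter}, the asymptotic boundary of $\CC_a$ consists of two coaxial round circles whose bounding geodesic planes are at hyperbolic distance exactly $2\varrho(a)=d$. Because $\Mob(\B^3)$ acts transitively on pairs of disjoint coaxial round circles on $S_\infty^2$ with a prescribed separation distance between their bounding geodesic planes, I can conjugate $\CC_a$ by a suitable element of $\Mob(\B^3)$ to obtain a spherical minimal catenoid $\CC$ whose asymptotic boundary is precisely $C_1\cup C_2$. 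Since $\CC$ is isometric to $\CC_a$ with $a\geq a_l$, Theorem~\ref{thm:main theorem} yields that $\CC$ is a least area minimal surface in the sense of Meeks and Yau.

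I do not anticipate any substantive obstacle here; the argument is essentially bookkeeping combining earlier results. The only point requiring mild care is the identification of $d_L(C_1,C_2)$ with $2\varrho(a)$, which is built into the derivation of \eqref{eq:Gomes distance}, where $x(\infty)=\varrho(a)$ is shown to equal half the hyperbolic distance between the two geodesic planes bounded by the asymptotic boundary circles of $\CC_a$. It is worth remarking that Gomes's existence theorem alone gives a catenoid for every $d_L\leq 2\varrho(a_c)$, but typically two such catenoids coexist (one with parameter in $(0,a_c]$, one with parameter in $[a_c,\infty)$); the more restrictive hypothesis $d_L\leq 2\varrho(a_l)<2\varrho(a_c)$ is exactly what selects the branch $a\geq a_l$ on which Theorem~\ref{thm:main theorem} applies.
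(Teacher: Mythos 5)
Your proposal is correct and follows the route the paper intends: the corollary is stated without a separate proof precisely because it is the combination of the monotone decreasing branch of $\varrho$ on $[a_c,\infty)$ (so $d_L(C_1,C_2)\leq 2\varrho(a_l)$ forces a solution $a\geq a_l$ of $2\varrho(a)=d_L$), the identification of $2\varrho(a)$ with the distance between the geodesic planes bounded by $\partial_\infty\CC_a$, an isometry of $\B^3$ matching the asymptotic circles, and Theorem~\ref{thm:main theorem}. Your remark distinguishing this from the two-catenoid situation of Theorem~\ref{thm:Gomes1987-prop3.2-a} is accurate but not needed for the statement.
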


\section{Stability of Helicoids}\label{sec:stable helicoid}

In this section, we will prove Theorem \ref{thm:main theoremII}. One of
the key points in the proof is that two conjugate minimal surfaces
are either both stable or both unstable.

\subsection{Conjugate minimal surface}

Let $M^{3}(c)$ be the $3$-dimensional space form whose
sectional curvature is a constant $c$.

\begin{definition}[{\cite[pp. 699-700]{dCD83}}]
\label{def:conjugate minimal immersion}
Let $f:\Sigma\to{}M^{3}(c)$ be a minimal surface in isothermal
parametrs $(\sigma,t)$. Denote by
\begin{equation*}
   \I=E(d\sigma^2+dt^2)
   \quad\text{and}\quad
   \II=\beta_{11}d\sigma^2+2\beta_{12}d\sigma{}dt+
   \beta_{22}dt^2
\end{equation*}
the first and second fundamental forms of $f$, respectively.

Set $\psi=\beta_{11}-i\beta_{12}$ and define a family of
quadratic form depending on a parameter $\theta$,
$0\leq\theta\leq{}2\pi$, by
\begin{equation}
   \beta_{11}(\theta)=\Re(e^{i\theta}\psi)\ ,
   \quad
   \beta_{22}(\theta)=-\Re(e^{i\theta}\psi)\ ,
   \quad
   \beta_{12}(\theta)=\Im(e^{i\theta}\psi)\ .
\end{equation}
Then the following forms
\begin{equation*}
   \I_{\theta}=\I
   \quad\text{and}\quad
   \II_{\theta}=\beta_{11}(\theta)d\sigma^2+
   2\beta_{12}(\theta)d\sigma{}dt+
   \beta_{22}(\theta)dt^2
\end{equation*}
give rise to an isometry family
$f_{\theta}:\widetilde\Sigma\to{}M^{3}(c)$ of minimal
immersions, where $\widetilde\Sigma$ is the universal covering
of $\Sigma$. The immersion $f_{\pi/2}$ is called the
\emph{conjugate immersion} to $f_0=f$.
\end{definition}

The following result is obvious, but it's crucial to prove
Theorem \ref{thm:main theorem}.

\begin{lemma}Let $f:\Sigma\to{}M^{3}(c)$ be an immersed
minimal surface, and let
$f_{\pi/2}:\widetilde\Sigma\to{}M^{3}(c)$ be its conjugate
minimal surface, where $\widetilde\Sigma$ is the universal
covering of $\Sigma$. Then the minimal immersion $f$ is
globally stable if and only if its conjugate immersion
$f_{\pi/2}$ is globally stable.
\end{lemma}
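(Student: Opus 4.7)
The plan is to show that the Jacobi operators of $f$ and $f_{\pi/2}$ coincide modulo the covering $\pi\colon \widetilde\Sigma \to \Sigma$, and then to invoke Fischer-Colbrie-Schoen (Theorem~\ref{thm:FCS80}) on both sides to transfer global stability via the existence of a positive Jacobi field.

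The first step is to verify that $\widetilde\Lcal = \pi^{*}\Lcal$ as Schr\"odinger operators. By construction of the conjugate family, $\I_{\pi/2} = \I$, so the metric induced by $f_{\pi/2}$ on $\widetilde\Sigma$ is exactly the pullback via $\pi$ of the metric induced by $f$ on $\Sigma$; hence $\Delta_{\widetilde\Sigma} = \pi^{*}\Delta_{\Sigma}$. Minimality gives $\beta_{22}(\theta) = -\beta_{11}(\theta)$, and in the isothermal parameters $(\sigma,t)$ one obtains
\begin{equation*}
   |A_{f_{\theta}}|^{2} \;=\; \frac{2\bigl(\beta_{11}(\theta)^{2}+\beta_{12}(\theta)^{2}\bigr)}{E^{2}} \;=\; \frac{2\,|e^{i\theta}\psi|^{2}}{E^{2}} \;=\; \frac{2\,|\psi|^{2}}{E^{2}}\,,
\end{equation*}
which is independent of $\theta$; therefore $|A_{f_{\pi/2}}|^{2} = |A_{f}|^{2}\circ\pi$. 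Since $M^{3}(c)$ has constant sectional curvature $c$, the Ricci term $\Ric(e_{3}) = 2c$ is independent of the choice of unit normal, so the same constant enters both Jacobi operators. Combining the three contributions yields $\widetilde\Lcal = \pi^{*}\Lcal$.

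The easy direction is then immediate from Theorem~\ref{thm:FCS80}: if $f$ is globally stable there is $\phi>0$ on $\Sigma$ with $\Lcal\phi = 0$, and the pullback $\tilde\phi := \phi\circ\pi$ is a positive solution of $\widetilde\Lcal\tilde\phi = 0$ on $\widetilde\Sigma$, so $f_{\pi/2}$ is globally stable.

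The main obstacle is the converse: starting from a positive Jacobi field $\tilde\phi$ on $\widetilde\Sigma$ one must manufacture a positive Jacobi field on $\Sigma$, and $\tilde\phi$ need not be invariant under the deck group $\Gamma$ of $\pi$. My plan is to replace $\tilde\phi$ by a $\Gamma$-invariant positive solution. The convex set $P$ of positive solutions of $\widetilde\Lcal = 0$ on $\widetilde\Sigma$ normalized at a fixed base point is relatively compact in the topology of local uniform convergence (Harnack inequality applied to $\widetilde\Lcal$), and $\Gamma$ acts continuously and affinely on $P$. In the situation of interest in this paper $\Gamma$ is amenable (indeed $\Gamma = \Z$ for the catenoid--helicoid pair), so the Markov-Kakutani fixed-point theorem produces a $\Gamma$-invariant element of $P$; this element descends to a positive Jacobi field on $\Sigma$, and a second application of Theorem~\ref{thm:FCS80} concludes that $f$ is globally stable.
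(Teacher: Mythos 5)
Your first two steps coincide with the paper's proof: since $\I_{\pi/2}=\I$ the two immersions induce the same metric (hence the same Laplacian), minimality and the definition of the conjugate family give $|A_{f_{\pi/2}}|^2=2|\psi|^2/E^2=|A_f|^2\circ\pi$, and $\Ric(e_3)=2c$ is constant, so the Jacobi operator on $\widetilde\Sigma$ is the pullback of the one on $\Sigma$; pulling back a positive Jacobi field and invoking Theorem~\ref{thm:FCS80} is exactly how the paper handles the direction ``$f$ stable $\Rightarrow f_{\pi/2}$ stable.'' You are also right that the converse is the delicate point (the paper's own proof stops essentially after the easy direction). However, your mechanism for the converse has a genuine gap. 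The deck group acts linearly on the \emph{cone} of positive solutions of $\widetilde{\Lcal}u=0$, but that cone is not compact; once you normalize at a base point to get your compact convex set $P$, the induced action $u\mapsto (u\circ\gamma)/u(\gamma x_0)$ is projective, \emph{not} affine, so the claim ``$\Gamma$ acts continuously and affinely on $P$'' fails and Markov--Kakutani (which needs commuting affine self-maps) does not apply. Moreover, a fixed point of the renormalized map would only satisfy $u\circ\gamma=c\,u$ for some constant $c>0$; such $\Gamma$-multiplicative solutions exist quite generally (e.g.\ via Schauder--Tychonoff) but do not descend to $\Sigma$ unless $c=1$, and nothing in your argument forces $c=1$. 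The difficulty is real: for non-amenable deck groups a positive solution upstairs need not produce an invariant one (consider $\Delta+\tfrac18$ on the universal cover of a closed hyperbolic surface), so amenability must enter, but not through this fixed-point route.

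A correct and more elementary way to close the converse in the case actually needed ($\Gamma\cong\Z$ for the catenoid--helicoid pair) is to argue the contrapositive with a F{\o}lner-type cutoff rather than to hunt for an invariant positive solution: if $f$ is not stable, pick a compact $\Omega\subset\Sigma$ and $\phi\in C_0^\infty(\Omega)$ whose quadratic form $-\int\phi\,\Lcal\phi$ is negative (or smaller than $\epsilon\int\phi^2$), lift $\phi$ to the $\Gamma$-invariant function $\phi\circ\pi$ on $\widetilde\Sigma$, and multiply by a cutoff $\chi_N$ equal to $1$ on $N$ consecutive fundamental domains. The quadratic form of $\chi_N\,(\phi\circ\pi)$ equals $N$ times that of $\phi$ plus an error $\int|\nabla\chi_N|^2(\phi\circ\pi)^2$ supported in a bounded number of fundamental domains, hence is negative (resp.\ $\leq\epsilon N\int\phi^2+C$) for large $N$; this produces compactly supported destabilizing test functions on $\widetilde\Sigma$ and shows $f_{\pi/2}$ is not stable. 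This replaces your Harnack/fixed-point step, uses amenability of $\Z$ exactly where it is needed, and, combined with your (correct) identification of the Jacobi operators and the easy direction, gives a complete proof --- indeed a more complete one than the paper's own terse argument.
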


\begin{proof}Let $\widetilde{f}$ be the universal lifting
of $f$, then
$\widetilde{f}:\widetilde\Sigma\to{}M^{3}(c)$ is a minimal
immersion. It's well known that the global stability of $f$
implies the global stability of $\widetilde{f}$.
Actually the minimal surfaces $\Sigma$ and $\widetilde\Sigma$
share the same Jacobi operator defined by
\eqref{eq:Jacobi operator}. If $\Sigma$ is globally stable,
there exists a positive Jacobi field on $\Sigma$ according to
Theorem \ref{thm:FCS80}, which implies that $\widetilde\Sigma$
is also globally stable, since the corresponding positive Jacobi
field on $\widetilde\Sigma$ is given by composing.

Next we claim that $\widetilde{f}$ and $f_{\pi/2}$ share the
same Jacobi operator. In fact, since the Laplacian depends only
on the first fundamental form, $\widetilde{f}$ and $f_{\pi/2}$
have the same Laplacian. Furthermore according to the definition
of the conjugate minimal immersion, $\widetilde{f}$ and $f_{\pi/2}$
have the same square norm of the second fundamental form, i.e.
$|A|^2=(\beta_{11}^2+2\beta_{12}^2+\beta_{22}^2)/E^2$, where we
used the notations in Definition
\ref{def:conjugate minimal immersion}.

By \eqref{eq:Jacobi operator} and Theorem \ref{thm:FCS80},
the proof of the lemma is complete.
\end{proof}

\subsection{Proof of Theorem \ref{thm:main theoremII}}

For hyperbolic and parabolic catenoids in $\HH^3$ defined in
$\S$\ref{subsub:hyperbolic catenoild-lorentz}
and $\S$\ref{subsub:parabolic catenoild-lorentz},
do Carmo and Dajczer proved that they are
globally stable (see \cite[Theorem (5.5)]{dCD83}). Furthermore,
Candel proved that the hyperbolic and parabolic catenoids are
least area minimal surfaces (see \cite[p. 3574]{Can07}).

The following result will be used for proving Theorem
\ref{thm:main theoremII}. The equation
\eqref{eq:relation between catenoids}
can be found in \cite[p. 34]{BSE09}.

\begin{lemma}[B{\'e}rard and Sa Earp]
\label{lem:relation between spherical catenoids}
The spherical catenoid $\Cscr_{1}(\atilde)$ defined
in $\S${}\ref{subsub:spherical catenoild-lorentz} is isometric to the
spherical catenoid $\CC_{a}$ defined in $\S${}\ref{subsec:catnoids},
where
\begin{equation}\label{eq:relation between catenoids}
   2\atilde=\cosh(2a)\ .
\end{equation}
\end{lemma}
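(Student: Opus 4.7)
The plan is to reduce the claim to an equality of neck radii, that is, the minimum hyperbolic distance from the generating curve to the rotation axis. A spherical minimal catenoid in $\H^{3}$ is determined, up to an isometry of the ambient hyperbolic space, by its neck radius. Indeed, both $\Cscr_{1}(\atilde)$ and $\CC_{a}$ are complete rotationally symmetric minimal surfaces, and since $\Isom(\H^{3})$ acts transitively on oriented geodesics, after applying an isometry of $\H^{3}$ we may assume that the two rotation axes coincide. Writing the profile curve in warped-product coordinates $(x,y)$ around the common axis (as in \eqref{eq:warped product metric}), the minimality condition reduces to the first-order ODE \eqref{eq:differential equations of Pi}, whose integration constant is fixed by the initial conditions $y(0)=r_{0}$ and $y'(0)=0$ at the neck. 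By uniqueness of solutions, the two profile curves coincide whenever $r_{0}$ does, so the two surfaces are congruent.

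The second step is to compute the neck radius of each catenoid. For $\CC_{a}$, the parametrization \eqref{eq:x(a,s)}--\eqref{eq:y(a,s)-II} gives $y(a,0)=a$ and $y_{s}(a,0)=0$, so the neck radius is exactly $a$. For $\Cscr_{1}(\atilde)$, I would work in the hyperboloid model and use the standard formula
\begin{equation*}
   \sinh^{2}\!\bigl(d(x,P^{2}\cap\HH^{3})\bigr)=x_{1}^{2}+x_{2}^{2},
\end{equation*}
which follows from $\cosh d(x,y)=-\inner{x}{y}$ together with orthogonal projection onto the Lorentzian plane $P^{2}$. Because the curve $C$ lies in $\{x_{2}=0\}$, the distance from $C$ to the rotation axis satisfies $\sinh^{2}d=x_{1}(s)^{2}=\atilde\cosh(2s)-1/2$, which is minimized at $s=0$; hence the neck radius of $\Cscr_{1}(\atilde)$ satisfies $\sinh^{2}(\text{neck})=\atilde-1/2$.

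Equating the two neck radii yields $\sinh^{2}(a)=\atilde-1/2$, and using $\cosh(2a)=2\sinh^{2}(a)+1$ gives $2\atilde=\cosh(2a)$, which is \eqref{eq:relation between catenoids}. The only nontrivial step is the uniqueness claim in the first paragraph, which I would justify either by citing the classification of rotationally symmetric minimal surfaces in $\H^{3}$ (do Carmo--Dajczer) or, directly, by noting that both surfaces reduce to the same ODE \eqref{eq:differential equations of Pi} with the same initial data at the neck, so they are isometric as abstract Riemannian surfaces and congruent as subsets of $\H^{3}$ once the axes are aligned.
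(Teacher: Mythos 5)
Your proposal is correct and follows essentially the same route as the paper: the paper's proof likewise computes that the distance from $\sigma_a$ to its axis is $a$ and that the distance from $C$ to $P^2\cap\HH^3$ is $\sinh^{-1}\bigl(\sqrt{\atilde-1/2}\,\bigr)$, and equates the two to get $2\atilde=\cosh(2a)$. The only difference is that you spell out (via the ODE \eqref{eq:differential equations of Pi} with neck initial data, or the do Carmo--Dajczer classification) why equal neck radii force congruence, a step the paper leaves implicit.
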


\begin{proof}The spherical catenoid $\CC_{a}$ is
obtained by rotating the generating curve
$\sigma_{a}$ along the axis $\gamma_{0}$ in \eqref{eq:rotation axis}.
The distance between $\sigma_{a}$ and $\gamma_{0}$ is $a$.

The spherical catenoid $\Cscr_{1}(\atilde)$
defined in $\S${}\ref{subsub:spherical catenoild-lorentz}
can be obtained by rotating the generating
curve $C$ given by \eqref{eq:spherical catenoid-x1(s)}
and \eqref{eq:spherical catenoid-x3(s)-x4(s)} along the
geodesic $P^2\cap\HH^{3}$. The distance
between $C$ and $P^2\cap\HH^{3}$ is
$\sinh^{-1}\left(\sqrt{\atilde-1/2}\,\right)$.

Therefore the catenoid $\Cscr_{1}(\atilde)$ is
isometric to the catenoid $\CC_{a}$ if and only if
$a=\sinh^{-1}\left(\sqrt{\atilde-1/2}\,\right)$, which
implies \eqref{eq:relation between catenoids}.
\end{proof}

The following result can be found in \cite[Theorem (3.31)]{dCD83}.

\begin{theorem}[do Carmo-Dajczer]
\label{thm:conjugate minimal surfaces}
Let $f:\Cscr\to\HH^3$ be a minimal catenoid defined in
$\S${}\ref{subsec:catenoids in hyperboloid model}.
Its conjugate minimal surface is the geodesically-ruled minimal surface
$\Hcal_{\abar}$ given by \eqref{eq:helicoid in hyperboloid model} where
\begin{equation}\label{eq:relations for CMS}
   \begin{cases}
      \abar=\sqrt{(\atilde+1/2)/(\atilde-1/2)}\ ,
             &\text{if}\ \Cscr=\Cscr_{1}(\atilde)\
              \text{is spherical}\ ,\\
      \abar=\sqrt{(\atilde-1/2)/(\atilde+1/2)}\ ,
             &\text{if}\ \Cscr=\Cscr_{-1}(\atilde)\
              \text{is hyperbolic}\ ,\\
      \abar=1\ , &\text{if}\ \Cscr=\Cscr_{0}\
              \text{is parabolic}\ .
   \end{cases}
\end{equation}
\end{theorem}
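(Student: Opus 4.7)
The plan is to verify the conjugacy directly from Definition~\ref{def:conjugate minimal immersion} by putting both the catenoid $\Cscr$ and the candidate helicoid $\Hcal_{\abar}$ into isothermal parameters and matching their fundamental forms. First I would re-parametrize each of the three catenoids from $\S\ref{subsec:catenoids in hyperboloid model}$ by replacing the arc-length parameter $s$ of the generating curve $C$ with a conformal parameter $\sigma=\sigma(s)$ chosen so that $ds^{2}$ along $C$ and the circumferential metric on the orbit under $\O(P^{2})$ have the same conformal factor. Concretely, for the spherical catenoid $\Cscr_{1}(\atilde)$, one takes $\sigma$ so that $d\sigma=ds/x_{1}(s)$ (and analogously for the hyperbolic and parabolic cases with $x_{1}^{2}\pm 1$), and pairs it with the rotation angle $t=\phi$. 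In these coordinates the induced metric becomes $\I=E(\sigma)(d\sigma^{2}+dt^{2})$.

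Next I would compute the second fundamental form in the same $(\sigma,t)$-frame. Because $\Cscr$ is a surface of revolution, the unit normal is radial with respect to the rotation axis, so the second fundamental form is diagonal: $\II=\beta_{11}d\sigma^{2}+\beta_{22}dt^{2}$ with $\beta_{11}=-\beta_{22}$ by minimality and $\beta_{12}=0$. Thus $\psi=\beta_{11}$ is a real-valued holomorphic function of $\sigma+it$ (this is just the standard Hopf differential for minimal surfaces in a space form). Setting $\theta=\pi/2$ in Definition~\ref{def:conjugate minimal immersion} yields $\beta_{11}(\pi/2)=0$ and $\beta_{12}(\pi/2)=\beta_{11}$, so the conjugate surface has purely off-diagonal second fundamental form in the same isothermal chart. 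This off-diagonal structure is precisely the algebraic signature of a geodesically-ruled minimal surface, with the ruling direction aligned with $\partial_{t}$; this will identify the conjugate as a helicoid.

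To pin down the pitch parameter $\abar$ I would parametrize $\Hcal_{\abar}$ by \eqref{eq:helicoid in hyperboloid model}, re-express the pair $(u,v)$ in isothermal coordinates (the metric $\cosh^{2}u\,dv^{2}+(1+\abar^{2}\sinh^{2}u)\,du^{2}$ becomes conformal after an appropriate change of variable in $u$), and compute $E$ and $\psi_{\Hcal}$. The values of $\abar$ in \eqref{eq:relations for CMS} are then read off by matching $\psi_{\Hcal}$ with $i\,\psi_{\Cscr}$ under the correspondence of conformal parameters; the three cases $\Cscr_{1}(\atilde)$, $\Cscr_{-1}(\atilde)$, $\Cscr_{0}$ just correspond to the three types of one-parameter isometry subgroups of $\HH^{3}$ (elliptic, hyperbolic, parabolic) generating the ruling of $\Hcal_{\abar}$. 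Finally, uniqueness of the associate family (Bonnet's theorem in $\HH^{3}$) guarantees that the conjugate is exactly this helicoid up to ambient isometry.

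The main obstacle is the bookkeeping of the conformal change of variable simultaneously on both sides: one has to show that the implicit $\sigma(s)$ used to diagonalize the catenoid's metric matches, after conjugation, the conformal parameter that turns the helicoid's induced metric into $E(d\sigma^{2}+dt^{2})$, so that the identification of $\psi$'s actually yields the stated algebraic relations between $\atilde$ and $\abar$. Once this correspondence is established, the three formulas in \eqref{eq:relations for CMS} are essentially a computation using $x_{1}(s)=\sqrt{\atilde\cosh(2s)\mp 1/2}$ and the known form of $\phi(s)$.
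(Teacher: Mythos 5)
The paper does not prove Theorem~\ref{thm:conjugate minimal surfaces} at all: it is quoted from \cite[Theorem (3.31)]{dCD83}, so there is no in-paper argument to compare against, and your plan is in substance the method of that cited proof (isothermal coordinates on the rotation surface, diagonal trace-free $\II$ so the Hopf function $\psi$ is real, rotate by $\theta=\pi/2$ as in Definition~\ref{def:conjugate minimal immersion}, and identify the resulting pair of fundamental forms with those of the explicitly parametrized helicoid via Bonnet's uniqueness theorem). As a strategy this is sound; as written it is a plan rather than a proof, and three points need repair before it becomes one. First, the second isothermal coordinate is the parameter of the $\O(P^2)$-orbit, not $\phi$: the function $\phi(s)$ in \eqref{eq:spherical catenoid-x3(s)-x4(s)} lives on the generating curve $C$ itself, so ``pairing $\sigma$ with $t=\phi$'' conflates two different angles; with $s$ the arc length of $C$ the induced metric is $ds^{2}+x_{1}^{2}(s)\,d\theta^{2}$, $\theta$ the orbit parameter, and the correct substitution is $d\sigma=ds/x_{1}(s)$ in all three cases. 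Second, ``purely off-diagonal $\II$ in isothermal coordinates'' only says the coordinate curves are asymptotic curves, and asymptotic curves of a minimal surface need not be ambient geodesics; so this is not by itself the signature of a geodesically-ruled surface and cannot be used to recognize the conjugate as a helicoid. Your fallback --- computing $\I$ and $\II$ of $\Hcal_{\abar}$ from \eqref{eq:helicoid in hyperboloid model} and matching, then invoking the fundamental theorem of surfaces in $\HH^{3}$ --- is the correct (and the original) route, but it is also where all the content of \eqref{eq:relations for CMS} lies, and your proposal defers exactly that computation. Third, a computational slip you would need to fix before carrying it out: the induced metric of \eqref{eq:helicoid in hyperboloid model} is $du^{2}+(\cosh^{2}u+\abar^{2}\sinh^{2}u)\,dv^{2}$, not the expression you wrote with the $\abar$-dependence on the $du^{2}$ coefficient.
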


Now we are able to prove the theorem.

\begin{maintheoremII}
For a family of minimal helicoids $\{\Hcal_{\abar}\}_{\abar\geq{}0}$
in the hyperbolic $3$-space given by
\eqref{eq:helicoid in hyperboloid model},
there exist a constant $\abar_c=\coth(a_c)\approx{}2.17968$ such that
the following statements are true:
  \begin{enumerate}
    \item $\Hcal_{\abar}$ is a globally stable minimal surface
          if $0\leq{}\abar\leq{}\abar_c$, and
    \item $\Hcal_{\abar}$ is an unstable minimal surface with Morse
          index one if $\abar>\abar_c$.
  \end{enumerate}
\end{maintheoremII}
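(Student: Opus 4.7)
The plan is to invoke Theorem \ref{thm:conjugate minimal surfaces}: every helicoid $\Hcal_{\abar}$ is the conjugate minimal immersion of some catenoid in $\HH^{3}$. Since conjugate minimal immersions share the first fundamental form and the square norm $|A|^{2}$ of the second fundamental form, they share the Jacobi operator $\Lcal$ and, by the lemma on conjugate minimal immersions following Definition \ref{def:conjugate minimal immersion}, the same global stability. This reduces the helicoid problem to Theorem \ref{thm:BSE theorem} together with the known global stability of hyperbolic and parabolic catenoids \cite[Theorem 5.5]{dCD83}.

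First I would translate the correspondence \eqref{eq:relations for CMS}: for $0\leq\abar<1$ the conjugate is a hyperbolic catenoid, for $\abar=1$ the parabolic catenoid, and for $\abar>1$ a spherical catenoid $\Cscr_{1}(\atilde)$ with $\abar^{2}=(\atilde+1/2)/(\atilde-1/2)$. Combining this with Lemma \ref{lem:relation between spherical catenoids}, namely $2\atilde=\cosh(2a)$, yields
\[
\abar^{2}=\frac{\cosh(2a)+1}{\cosh(2a)-1}=\frac{2\cosh^{2}a}{2\sinh^{2}a}=\coth^{2}a,
\]
so $\abar=\coth a$, a strictly decreasing bijection from $(0,\infty)$ onto $(1,\infty)$. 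In particular $\abar_{c}=\coth(a_{c})$ corresponds precisely to the critical catenoid parameter of Theorem \ref{thm:BSE theorem}.

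From this dictionary, part (1) follows immediately: for $0\leq\abar\leq 1$ the conjugate catenoid is globally stable by \cite[Theorem 5.5]{dCD83}; for $1<\abar\leq\abar_{c}$ the conjugate spherical catenoid is $\CC_{a}$ with $a=\coth^{-1}(\abar)\geq a_{c}$, globally stable by Theorem \ref{thm:BSE theorem}. In either case the conjugate-minimal-surface lemma yields the global stability of $\Hcal_{\abar}$. For part (2), when $\abar>\abar_{c}$ the conjugate catenoid $\CC_{a}$ (with $a<a_{c}$) is unstable with Morse index one by Theorem \ref{thm:BSE theorem}, and the lemma transfers global instability to $\Hcal_{\abar}$.

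The main obstacle is upgrading \emph{unstable} to \emph{Morse index exactly one} on the helicoid. Because the conjugation is realized on the universal cover, whereas $\Hcal_{\abar}$ is simply connected and $\CC_{a}$ is not, a priori more unstable modes could appear on the helicoid. To handle this I would use that the variation Jacobi field $\xi(a,\cdot)$ of Proposition \ref{prop:vertical and variation fields} is rotationally symmetric and yields the maximally weakly stable subdomain $\CC(z(a))$ appearing in the proof of Theorem \ref{thm:BSE-stability}; under the conjugacy it transports to a Jacobi field on $\Hcal_{\abar}$ invariant under the screw-motion group $G_{\abar}\subset\PSL_{2}(\C)$ whose nodal set bounds a corresponding compact subdomain $\Hcal^{*}\subset\Hcal_{\abar}$ with $\lambda_{1}(\Hcal^{*})\leq 0$. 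Combining Lemma \ref{lem:monotonicity of eigenvalue} with a separation-of-variables argument along the $G_{\abar}$-orbits, one would reduce the Morse-index count on every compact exhaustion of $\Hcal_{\abar}$ to the rotationally symmetric one-parameter eigenvalue problem on $\CC_{a}$, which has a single negative eigenvalue by Theorem \ref{thm:BSE-stability}. This is the delicate step: establishing that no non-$G_{\abar}$-invariant Fourier mode along the screw-motion orbits can contribute an additional negative eigenvalue is what forces the index to be exactly one rather than larger.
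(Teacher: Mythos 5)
Your reduction via conjugate minimal surfaces, the computation $\abar=\coth a$ obtained by combining \eqref{eq:relations for CMS} with \eqref{eq:relation between catenoids}, the proof of part (1), and the transfer of instability to $\Hcal_{\abar}$ for $\abar>\abar_c$ all coincide with the paper's argument. The genuine problem is your final step. You correctly sense that, because the conjugation is realized on the universal cover while $\Hcal_{\abar}$ is simply connected and $\CC_a$ is not, extra unstable modes could appear on the helicoid; but you then set out to rule them out and to conclude that the Morse index is exactly one. That cannot succeed, because the claim is false: the version of the statement you were given carries a typo (compare the abstract and Theorem~\ref{thm:main theoremII} as stated in the introduction, both of which assert Morse index \emph{infinity}), and the paper's proof establishes precisely that the index is infinite when $\abar>\abar_c$. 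The mechanism is the opposite of what your separation-of-variables argument would need: since $\Hcal_{\abar}$ is the universal cover of the unstable catenoid $\CC_a$ with $a=\coth^{-1}(\abar)<a_c$, it contains infinitely many pairwise disjoint domains, each isometric to a fundamental domain of that cover and hence each unstable; every screw-motion translate contributes its own negative direction, so no compact exhaustion has bounded index.

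Concretely, the paper argues by contradiction using Fischer-Colbrie's result (Theorem~\ref{prop:FC85}): if $\Hcal_{\abar}$ had finite index, there would be a compact $K\subset\Hcal_{\abar}$ with $\Hcal_{\abar}\setminus K$ stable. One then exhibits the noncompact domain $\mathcal{U}=\{(x,y,z)\in\Hcal_{\abar}\ |\ u\in\R,\ v_0\leq v\leq v_0+2\pi\sinh(\coth^{-1}\abar)\}$ with $v_0$ so large that $\mathcal{U}$ misses $K$; this $\mathcal{U}$ is isometric to a fundamental domain of the universal cover of $\CC_a$ and is therefore unstable, contradicting the stability of $\Hcal_{\abar}\setminus K$. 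Your proposed Fourier decomposition along the $G_{\abar}$-orbits cannot close the gap you identified: the unstable test functions are not confined to a single invariant mode, since they can be planted in arbitrarily many disjoint screw-motion translates of the unstable fundamental domain. The ``index one'' statement belongs to the catenoid $\CC_a$ itself (Theorem~\ref{thm:BSE-stability}), not to its simply connected conjugate.
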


\begin{proof}
When $\abar=0$, $\Hcal_{\abar}$ is a hyperbolic plane, so it is
globally stable.

According to Theorem~\ref{thm:conjugate minimal surfaces},
when $0<\abar<1$, $\Hcal_{\abar}$ is conjugate to
the hyperbolic catenoid $\Cscr_{-1}(\atilde)$, where
$\abar=\sqrt{(\atilde-1/2)/(\atilde+1/2)}$ by
\eqref{eq:relations for CMS}, and when $\abar=1$,
$\Hcal_{\abar}$ is conjugate to
the parabolic catenoid $\Cscr_{0}$ in $\HH^3$.
Therefore when $0<\abar\leq{}1$, the helicoid
$\Hcal_{\abar}$ is globally stable
by \cite[Theorem (5.5)]{dCD83}.

When $\abar>1$, $\Hcal_{\abar}$ is conjugate to the spherical catenoid
$\Cscr_{1}(\atilde)$ in $\HH^3$ by
Theorem~\ref{thm:conjugate minimal surfaces},
which is isometric to the spherical catenoid
$\CC_{a}$ in $\B^3$, where
$2\atilde=\cosh(2a)$
by \eqref{eq:relation between catenoids} and
$\abar=\sqrt{(\atilde+1/2)/(\atilde-1/2)}$ by
\eqref{eq:relations for CMS}.
Therefore $\Hcal_{\abar}$ is conjugate to
the spherical catenoid $\CC_{a}$ in $\B^3$, where
\begin{equation*}
   \abar=\coth(a)\ .
\end{equation*}
By Theorem \ref{thm:BSE theorem}, the catenoid $\CC_{a}$ is globally
stable if $a\geq{}a_{c}\approx{}0.49577$, therefore
$\Hcal_{\abar}$ is globally stable when
$1<\abar\leq{}\abar_c=\coth(a_{c})\approx{}2.17968$.

On the other hand, if $0<a<a_{c}$, then the
catenoid $\CC_{a}$ is unstable with Morse index one by
Theorem \ref{thm:BSE theorem}, therefore $\Hcal_{\abar}$
is unstable when $\abar>\abar_c$.

\begin{figure}[htbp]
  \begin{center}
     \includegraphics[scale=0.15]{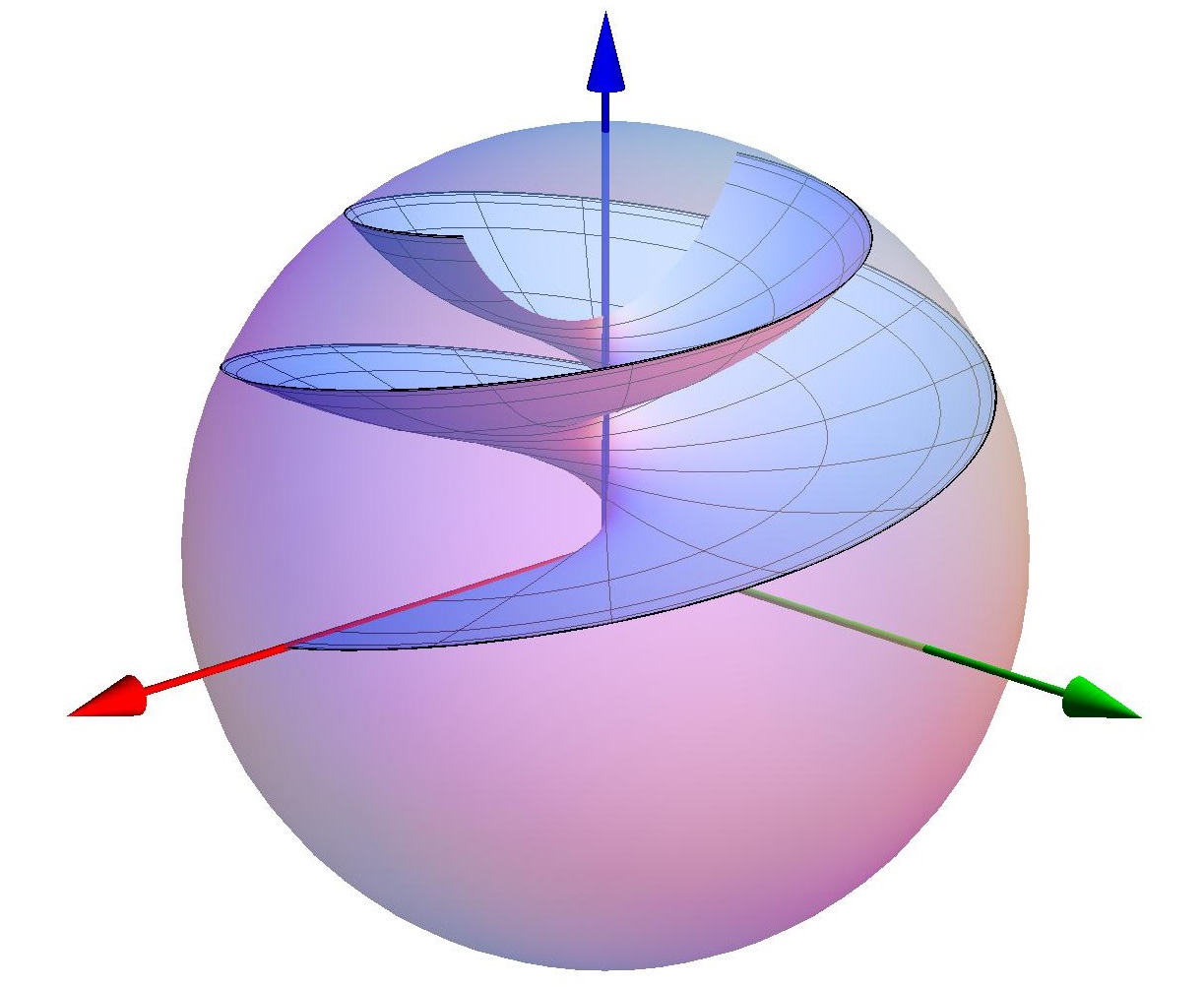}
  \end{center}
  \caption{This is a subdomain of the helicoid $\Hcal_5\subset\B^3$
  between two geodesics $v=0$ and $v=\pi/\sqrt{6}$ of
  $\Hcal_{5}$, which is isometric to the fundamental domain of the universal cover
  of the catenoid $\CC_{a}$ for $a=\coth^{-1}(5)$.}
  \label{fig:fundamental domain of catenoid}
\end{figure}

At last we will show that the Morse index of the helicoid
$\Hcal_{\abar}$ for $\abar>\abar_c$ is infinite. Otherwise
according to Theorem~\ref{prop:FC85}, there is a compact
subdomain $K$ of $\Hcal_{\abar}$ such that
$\Hcal_{\abar}\setminus{}K$ is stable. But this is impossible,
since we always can find a (noncompact) subdomain $\mathcal{U}$
of $\Hcal_{\abar}$ as follows: Consider the parametric equation
\eqref{eq:helicoid--ball model} of $\Hcal_{\abar}$ in the Poincar{\'e}
ball model $\B^3$ of the hyperbolic $3$-space, and write
$\mathcal{U}$ by
\begin{equation*}
   \mathcal{U}=\left\{(x,y,z)\in\Hcal_{\abar}\ |\
   u\in\R\ \text{and}\
   v_{0}\leq{}v\leq{}v_{0}+2\pi\sinh(\coth^{-1}(\abar))\right\}\ ,
\end{equation*}
where $v_{0}>0$ is sufficiently large such that $K$ is underneath
the geodesic of $\B^3$ which passes through
the point $(0,0,\tanh(v_{0}/2))\in\B^3$ and is contained in $\Hcal_{\abar}$
(see Figure~\ref{fig:fundamental domain of catenoid}).
It's easy to verify that
\begin{itemize}
  \item $\mathcal{U}$ is disjoint from $K$, i.e.
        $\mathcal{U}\subset\Hcal_{\abar}\setminus{}K$, and
  \item $\mathcal{U}$ is isometric to the fundamental domain
        of the universal cover of the catenoid $\CC_{a}$, where
        $a=\coth^{-1}(\abar)<a_{c}$.
\end{itemize}
Therefore $\mathcal{U}$ is unstable, and so is
$\Hcal_{\abar}\setminus{}K$.
\end{proof}

\section{Technical Lemmas}\label{sec:technical lemmas}

\subsection{Lemmas for $\S$\ref{sec:stable catenoid}}

In this part, we prove Lemma \ref{lem:derivative of d0} and
Lemma \ref{lem:second derivative of d[lambda]}, which will be
applied to prove Theorem~\ref{thm:BSE theorem}.

\begin{lemma}\label{lem:derivative of d0}
Let $\phi(a,t)=\sqrt{5}\,\cosh(a+t)-\cosh(3a+t)$,
then $\phi(a,t)\leq{}0$ for
$(a,t)\in{}[A_3,\infty)\times[0,\infty)$,
where the constant $A_{3}$ is defined by
\begin{equation}\label{eq:constant Lambda-3}
   A_3=\cosh^{-1}\left(\frac{\sqrt{3+\sqrt{5}}}{2}\right)
            \approx 0.530638\ .
\end{equation}
\end{lemma}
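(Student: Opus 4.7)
The plan is to rewrite the inequality $\phi(a,t)\leq 0$ in the equivalent form
\[
\frac{\cosh(3a+t)}{\cosh(a+t)}\geq\sqrt{5},
\]
and reduce the verification to an elementary one-variable inequality at $t=0$. This is valid since $\cosh(a+t)>0$ everywhere.

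First, applying the hyperbolic addition formula $\cosh(3a+t)=\cosh((a+t)+2a)=\cosh(2a)\cosh(a+t)+\sinh(2a)\sinh(a+t)$ gives
\[
\frac{\cosh(3a+t)}{\cosh(a+t)}=\cosh(2a)+\sinh(2a)\tanh(a+t).
\]
For fixed $a>0$, the right-hand side is strictly increasing in $t$, because $\sinh(2a)>0$ and $\tanh(a+t)$ is increasing. Hence over $t\in[0,\infty)$ the minimum is attained at $t=0$, and the lemma reduces to showing that
\[
\cosh(2a)+\sinh(2a)\tanh(a)\geq\sqrt{5}\quad\text{for all}\ a\geq A_3.
\]

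Next I would simplify the left-hand side using the identity $\sinh(2a)\tanh(a)=2\sinh(a)\cosh(a)\cdot\tanh(a)=2\sinh^{2}(a)=\cosh(2a)-1$, so that the one-variable inequality becomes simply
\[
2\cosh(2a)-1\geq\sqrt{5},\quad\text{i.e.}\quad\cosh(2a)\geq\frac{1+\sqrt{5}}{2}.
\]
Finally, by the definition \eqref{eq:constant Lambda-3} of $A_3$, we have $\cosh(A_3)=\sqrt{3+\sqrt{5}}/2$, so the double-angle formula gives $\cosh(2A_3)=2\cosh^{2}(A_3)-1=(3+\sqrt{5})/2-1=(1+\sqrt{5})/2$, exactly the threshold. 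Since $\cosh(2a)$ is increasing for $a\geq 0$, the inequality $\cosh(2a)\geq(1+\sqrt{5})/2$ holds precisely on $[A_3,\infty)$, completing the proof.

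There is no real obstacle here: the key observation is that dividing by $\cosh(a+t)$ converts the two-variable inequality into a quantity monotone in $t$, so it suffices to check $t=0$; the remaining one-variable check is engineered to be sharp at $a=A_3$ by the very definition of $A_3$. In particular, this argument also identifies $A_3$ as the \emph{sharp} threshold on $a$ for the conclusion of the lemma to hold for all $t\geq 0$.
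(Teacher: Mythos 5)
Your proof is correct and follows essentially the same route as the paper: both arguments reduce the two-variable inequality to the case $t=0$ by observing that the $t$-dependence is favorable (the paper divides by $\cosh t$ and notes the coefficient of $\tanh t$ is nonnegative; you divide by $\cosh(a+t)$ and note monotonicity in $t$), and then solve the resulting one-variable inequality, whose solution is exactly $\cosh(2a)\geq(1+\sqrt{5})/2$, i.e.\ $a\geq A_3$. The only difference is the cosmetic choice of decomposition, plus your (correct) extra remark that $A_3$ is the sharp threshold.
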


\begin{proof}It's easy to verify that $\phi(a,t)\leq{}0$ is
equivalent to
\begin{equation*}
   \cosh(3a)-\sqrt{5}\cosh{}a+\tanh{}t\cdot
   (\sinh(3a)-\sqrt{5}\sinh{}a)\geq{}0\ .
\end{equation*}
Since $\tanh{}t\geq{}0$ for $t\geq{}0$ and
$\sinh(3a)-\sqrt{5}\sinh{}a\geq{}0$ for $a\geq{}0$,
we need solve the inequality
$\cosh(3a)-\sqrt{5}\cosh{}a\geq{}0$.

Let $A_3$ be the solution of the equation
$0=\sqrt{5}\,\cosh{}a-\cosh(3a)=
(\sqrt{5}-(4\cosh^{2}a-3))\cosh{}a$,
then $\phi(a,t)\leq{}0$
if $a\geq{}A_3$ and $t\geq{}0$.
\end{proof}

\begin{lemma}\label{lem:second derivative of d[lambda]}
Let $\psi(a,t)$ be the function given by
\begin{equation}\label{eq:psi(t,l)}
\begin{aligned}
   \psi(a,t)=
        &\,76\sinh(2a)-22\sinh(2t)+29\sinh(4a+2t)\\
        &\,+\sinh(8a+2t)-26\sinh(6a+4t)-6\sinh(10a+4t)\\
        &\,-25\sinh(8a+6t)+\sinh(12a+6t)\ .
\end{aligned}
\end{equation}
Then $\psi(a,t)<0$ for all $(a,t)\in{}[0,A_4]\times[0,\infty)$,
where the constant
\begin{equation}\label{eq:constant Lambda-4}
   A_4=\frac{1}{4}\cosh^{-1}\left(\frac{35+\sqrt{1241}}{8}\right)
            \approx 0.715548
\end{equation}
is the solution of the equation
$4\cosh^2(4a)-35\cosh(4a)-1=0$.
\end{lemma}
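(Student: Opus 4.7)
The plan is to separate variables: first I would show that for each fixed $a \in [0, A_4]$, the map $t \mapsto \psi(a, t)$ is monotonically decreasing on $[0, \infty)$, and then verify $\psi(a, 0) \leq 0$ on $[0, A_4]$. Together these give $\psi(a, t) \leq \psi(a, 0) \leq 0$ on the full rectangle, with strict inequality away from the corner $(0,0)$.

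I would handle the boundary case $t = 0$ first. Using the Chebyshev identity $\sinh(2ka) = \sinh(2a)\, U_{k-1}(\cosh(2a))$ with $U_{k-1}$ the second-kind Chebyshev polynomial to pull out $\sinh(2a)$, a direct computation gives
\[
\psi(a, 0) = 32 \sinh(2a)\, Q(\cosh(2a)), \qquad Q(u) = u^5 - 3u^4 - 7u^3 - u^2 + 5u + 3.
\]
It therefore suffices to show $Q(u) < 0$ on $u \in [1, u_4]$, where $u_4 = \cosh(2 A_4) \approx 2.21$. One checks $Q(1) = -2$, and then $Q'(u) = 5u^4 - 12u^3 - 21u^2 - 2u + 5 < 0$ on $[1, u_4]$ (its first positive zero lies in $(3, 4)$). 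Hence $Q$ is decreasing with $Q(u) \leq -2$ on this interval, so $\psi(a, 0) \leq -64 \sinh(2a) \leq 0$.

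For the monotonicity step I would substitute $X = e^{2a}$ and $Y = e^{2t}$, so that $X^6 Y^3 \partial_t \psi$ becomes a polynomial of degree six in $Y$ with Laurent coefficients in $X$. Its leading coefficient is proportional to $X^{10}(X^2 - 25)$, which is strictly negative on $X \in [1, X_4]$ because $X_4^2 = e^{4A_4} \approx 17.5 < 25$; so the $Y^6$ term dominates and forces the polynomial to be negative for $Y$ large. The defining equation $4\cosh^2(4a) - 35\cosh(4a) - 1 = 0$ of $A_4$ presumably arises exactly because some intermediate coefficient combination (viewed as a polynomial in $\cosh(4a)$) changes sign at $a = A_4$, making the inequality sharp in the $a$-direction.

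The main obstacle is this monotonicity step. Unlike the boundary case, $X^6 Y^3 \partial_t \psi$ has several positive subleading coefficients arising from expressions such as $X^4 + 29X^2 - 22$ and $22X^4 - 29X^2 - 1$ on the relevant range of $X$, so a single Chebyshev-style factorization will not immediately settle the sign. I would attempt to find a further factorization cued by the defining polynomial of $A_4$, or split the $t$-interval at an intermediate $t_0 = t_0(a)$, controlling the tail via the dominant $Y^6$ term and handling the small-$t$ regime by a Taylor expansion around $t = 0$ together with the boundary bound from the first step.
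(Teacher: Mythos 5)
Your boundary step is fine: one can check directly that $\psi(a,0)=32\sinh(2a)\,Q(\cosh 2a)$ with $Q(u)=u^5-3u^4-7u^3-u^2+5u+3$, and your sign analysis of $Q$ on $[1,\cosh(2A_4)]$ is correct. But the heart of your plan, the claim that $t\mapsto\psi(a,t)$ is decreasing on $[0,\infty)$ for every $a\in[0,A_4]$, is exactly the step you do not prove: you only observe that the top ($Y^6$) coefficient $3X^{10}(X^2-25)$ is negative, which controls the behaviour for large $t$, and then list strategies you would ``attempt'' (a further factorization, or a split of the $t$-interval). As written this is a genuine gap, not a proof. A secondary issue: your expectation that the defining equation $4\cosh^2(4a)-35\cosh(4a)-1=0$ of $A_4$ will drop out of your computation is misplaced. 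In your route the natural thresholds are $e^{4a}=25$ (for the leading coefficient; note $e^{4A_4}\approx 17.5$) and the real root of $Q$ near $u\approx 4.7$ (for the boundary term), both well beyond $A_4$; hunting for a factorization cued by the polynomial $4u^2-35u-1$ is therefore likely to lead you astray rather than close the argument.

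The good news is that your monotonicity claim is true on $[0,A_4]$ and can be settled by elementary groupings, with no need for the $X,Y$ polynomial machinery: writing
\begin{equation*}
\partial_t\psi=-44\cosh(2t)+58\cosh(4a+2t)+2\cosh(8a+2t)-104\cosh(6a+4t)-24\cosh(10a+4t)-150\cosh(8a+6t)+6\cosh(12a+6t)\ ,
\end{equation*}
use $\cosh(6a+4t)\geq\cosh(4a+2t)$ and $\cosh(10a+4t)\geq\cosh(8a+2t)$ to absorb the positive $2t$-frequency terms, and $6\cosh(12a+6t)\leq 6e^{4a}\cosh(8a+6t)<150\cosh(8a+6t)$ (valid because $e^{4a}\leq e^{4A_4}<25$) to dominate the last positive term; the remainder is strictly negative. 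With that inserted, your ``decreasing in $t$ plus boundary value'' scheme would give a complete and even slightly stronger result. For comparison, the paper never differentiates: it expands $\psi$ and splits it into the part odd in $t$ (with coefficients $\cosh(ka)$) and the part even in $t$ (with coefficients $\sinh(ka)$), bounding each by term-by-term inequalities such as $\sinh((m+n)a)\geq\sinh(ma)+\sinh(na)$; there the constant $A_4$ enters precisely through $\tfrac{35}{2}\sinh(8a)-\sinh(12a)=\sinh(4a)\left(1+35\cosh(4a)-4\cosh^2(4a)\right)\geq 0$, which explains where its defining equation actually comes from.
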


\begin{proof}Expand each term in $\psi(a,t)$ with the form
$\sinh(ma+nt)$ , then we may write
$\psi(a,t)=\psi_1(a,t)+\psi_2(a,t)$, where
\begin{equation*}
\begin{aligned}
   \psi_1(a,t)=
        &\,-22\sinh(2t)+29\sinh(2t)\cosh(4a)+\sinh(2t)\cosh(8a)\\
        &\,-26\sinh(4t)\cosh(6a)-6\sinh(4t)\cosh(10a)\\
        &\,-25\sinh(6t)\cosh(8a)+\sinh(6t)\cosh(12a))\ ,
\end{aligned}
\end{equation*}
and
\begin{equation*}
\begin{aligned}
   \psi_2(a,t)=
        &\,76\sinh(2a)+29\cosh(2t)\sinh(4a)+\cosh(2t)\sinh(8a)\\
        &\,-26\cosh(4t)\sinh(6a)-6\cosh(4t)\sinh(10a)\\
        &\,-25\cosh(6t)\sinh(8a)+\cosh(6t)\sinh(12a)\ .
\end{aligned}
\end{equation*}

\noindent\textbf{Claim}: \emph{$\psi_1(a,t)\leq{}0$ and
$\psi_2(a,t)\leq{}0$ for $(a,t)\in[0,A_4]\times[0,\infty)$}.

\begin{proof}[\bf{Proof of Claim}]
First of all, we will show that $\psi_1(a,\cdot)\leq{}0$
for $a\in[0,A_4]$. Since $\cosh(2t)\geq{}1$ for
any $t\in[0,\infty)$, we have the estimate
\begin{align*}
   \psi_1(a,t)
       =&\,-\sinh(2t)(22-29\cosh(4a)-\cosh(8a)\\
        &\qquad\qquad +52\cosh(2t)\cosh(6a)
                      +12\cosh(2t)\cosh(10a))\\
        &\,-\sinh(6t)(25\cosh(8a)-\cosh(12a))\\
    \leq&\,-\sinh(2t)(22-29\cosh(4a)-\cosh(8a)\\
        &\qquad\qquad +52\cosh(6a)+12\cosh(10a))\\
        &\,-\sinh(6t)(25\cosh(8a)-\cosh(12a))\ .
\end{align*}
Since $52\cosh(6a)-29\cosh(4a)>0$ and
$12\cosh(10a)-\cosh(8a)>0$ for $0\leq{}a<\infty$ and
$25\cosh(8a)-\cosh(12a)>0$ for $0\leq{}a\leq{}A_4$,
we have $\psi_1(a,\cdot)<0$ for $0\leq{}a\leq{}A_4$.

Secondly, for any $a\geq{}0$, we apply the inequality
\begin{equation*}
   \sinh((m+n)a)\geq\sinh(ma)+\sinh(na)\ ,
\end{equation*}
where $m,n$ are positive integers, to get the following
inequalities
\begin{itemize}
   \item $\sinh(6a)\geq\sinh(4a)+\sinh(2a)$,
   \item $\sinh(8a)\geq{}4\sinh(2a)$, and
   \item $\displaystyle\sinh(10a)\geq
          \begin{cases}
             \sinh(8a)+\sinh(2a)\\
             \sinh(4a)+3\sinh(2a)\\
             5\sinh(2a)
          \end{cases}$\ ,
\end{itemize}
which can imply the estimate
\begin{align*}
   \psi_2(a,t)\leq
        &\,-46\sinh(2a)(\cosh(4t)-1)-30\sinh(2a)(\cosh(6t)-1)\\
        &\,-29\sinh(4a)(\cosh(4t)-\cosh(2t))\\
        &\,-\sinh(8a)(\cosh(4t)-\cosh(2t))\\
        &\,-\cosh(6t)\left(\frac{35}{2}
            \sinh(8a)-\sinh(12a)\right)\ .
\end{align*}
As $\frac{35}{2}\sinh(8a)-\sinh(12a)=
\sinh(4a)(1+35\cosh(4a)-4\cosh^2(4a))\geq{}0$ if
$0\leq{}a\leq{}A_4$ and the fact
$\cosh(6t)\geq\cosh(4t)\geq\cosh(2t)\geq{}1$ for $0\leq{}t<\infty$,
we have $\psi_2(a,t)\leq{}0$ for
$(a,t)\in[0,A_4]\times[0,\infty)$.
\end{proof}

Therefore $\psi(a,t)<0$ for $(a,t)\in[0,A_4]\times[0,\infty)$.
\end{proof}

\subsection{Lemmas for $\S$\ref{sec:least area catenoid}}

In this subsection, we will prove Proposition~\ref{thm:Berard-Sa Earp}
and Proposition~\ref{thm:small maximally weakly stable domain}, which
will be used to prove Theorem~\ref{thm:main theorem}.
At first, we need the following lemma.

\begin{lemma}\label{lem:negative second order derivative of d(a,t)}
Let $R_3$ and $R_4$ be the regions defined by
\begin{align*}
  R_3&=\{(a,t)\in\R^2\ |\ t\geq{}a\geq{}A_3\}\ ,\\
  R_4&=\{(a,t)\in\R^2\ |\ 0<a\leq{}A_4\ \text{and}\ t\geq{}a\}\ ,
\end{align*}
where $A_3$ and $A_4$ are the constants defined in
\eqref{eq:constant Lambda-3} and \eqref{eq:constant Lambda-4}.
Then we have
\begin{equation*}
  \ppl{}{a}\,\rho(a,t)<0
  \quad\text{for}\ (a,t)\in{}R_3\ ,
\end{equation*}
and
\begin{equation*}
  \ppz{}{a}\,\rho(a,t)<0
  \quad\text{for}\ (a,t)\in{}R_4\ ,
\end{equation*}
where $\rho(a,t)$ is defined in \eqref{eq:catenary(a,t)}.
\end{lemma}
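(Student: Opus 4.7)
My plan is to differentiate $\rho(a,t)$ directly, identifying the resulting integrands with the expressions controlled by Lemmas~\ref{lem:derivative of d0} and~\ref{lem:second derivative of d[lambda]}. The decisive first step is the substitution $\tau=a+s$ in \eqref{eq:catenary(a,t)}; together with the identity $\sinh^2(2\tau)-\sinh^2(2a)=\sinh(2s)\sinh(4a+2s)$, this gives
\[
  \rho(a,t) = \int_0^{t-a} G(a,s)\,ds, \qquad
  G(a,s) = \frac{\sinh(2a)}{\cosh(a+s)\sqrt{\sinh(2s)\sinh(4a+2s)}},
\]
where $G$ has only an integrable $s^{-1/2}$ singularity at $s=0$ and is smooth in $a$, so Leibniz's rule applies for $t>a$.

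Logarithmic differentiation yields $\partial_a G = G\bigl[2\coth(2a)-\tanh(a+s)-2\coth(4a+2s)\bigr]$, and hyperbolic sum-to-product manipulations collapse this to the integrand of \eqref{eq:derivative of d0}; an analogous but longer computation identifies $\partial_a^2 G(a,s)$ with the integrand of \eqref{eq:2nd derivative of d0} (both identifications are forced by $\varrho'(a)=\int_0^\infty \partial_a G\,ds$ and $\varrho''(a)=\int_0^\infty \partial_a^2 G\,ds$, since the boundary contributions at $s=\infty$ vanish exponentially). By Lemma~\ref{lem:derivative of d0} we thus have $\partial_a G \leq 0$ on $[A_3,\infty)\times[0,\infty)$, and by Lemma~\ref{lem:second derivative of d[lambda]} we have $\partial_a^2 G < 0$ on $[0,A_4]\times[0,\infty)$.

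For the first claim, Leibniz's rule gives $\partial_a\rho(a,t) = -G(a,t-a) + \int_0^{t-a}\partial_a G(a,s)\,ds$; on $R_3$ both terms are $\leq 0$ and the boundary piece is strictly negative for $t>a$, so $\partial_a\rho<0$. For the second claim, a further application of Leibniz (tracking both the integrand dependence on $a$ and the moving endpoint $t-a$) produces
\[
  \partial_a^2 \rho(a,t) = \partial_s G(a,t-a) - 2\partial_a G(a,t-a) + \int_0^{t-a}\partial_a^2 G(a,s)\,ds,
\]
whose integral is already negative on $R_4$ by the second identification above.

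The main obstacle is the boundary pair $\partial_s G(a,t-a) - 2\partial_a G(a,t-a)$, which equals $G(a,t-a)\cdot B(a,t-a)$ with $B(a,s) = \tanh(a+s)-\coth(2s)+3\coth(4a+2s)-4\coth(2a)$. Using $s>0$ and $a>0$, I have $\tanh(a+s)<1$, $\coth(4a+2s)<\coth(2a)$, and $\coth(2s)>1$, which chain into
\[
  \tanh(a+s)+3\coth(4a+2s) < 1 + 3\coth(2a) < \coth(2s) + 4\coth(2a),
\]
forcing $B<0$ and hence the boundary contribution to be strictly negative. Combined with the integral estimate, this yields $\partial_a^2\rho<0$ on $R_4$. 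The genuinely nontrivial work is the algebraic identification of $\partial_a G$ and $\partial_a^2 G$ with the compact forms in \eqref{eq:derivative of d0} and \eqref{eq:2nd derivative of d0}; once those are in hand, Lemmas~\ref{lem:derivative of d0} and~\ref{lem:second derivative of d[lambda]} dispose of the interior signs and the short hyperbolic inequality above handles the boundary.
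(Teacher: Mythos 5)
Your proposal is correct and follows essentially the same route as the paper: substitute $\tau=a+s$, differentiate with Leibniz's rule tracking the moving endpoint $t-a$, identify the interior integrands with those of \eqref{eq:derivative of d0} and \eqref{eq:2nd derivative of d0} so that Lemma~\ref{lem:derivative of d0} and Lemma~\ref{lem:second derivative of d[lambda]} give their signs, and check that the boundary contributions are negative. The only (cosmetic) difference is in the second-derivative boundary term, which the paper writes in closed form and bounds via $w(a,t)\geq\sinh(6a)>0$, while you keep it as $G\cdot B$ and verify $B<0$ by a short $\tanh$/$\coth$ monotonicity chain — an equivalent and slightly cleaner verification of the same fact.
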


\begin{proof}Using the substitution $\tau\mapsto\tau+a$ in
\eqref{eq:catenary(a,t)}, we have
\begin{equation*}
  \rho(a,t)=\int_{0}^{t-a}\frac{\sinh(2a)}{\cosh(a+\tau)}
  \frac{d\tau}{\sqrt{\sinh^2(2a+2\tau)-\sinh^2(2a)}}\ ,
  \quad{}t\geq{}a\ .
\end{equation*}
Direct computation shows
\begin{align*}
  \ppl{}{a}\,\rho(a,t)=
     &\,\int_{0}^{t-a}\frac{\sinh(a+\tau)
      (5\cosh^2(a+\tau)-\cosh^{2}(3a+\tau))}
      {\cosh^2(a+\tau)\sqrt{\sinh(2\tau)\sinh^{3}(4a+2\tau)}}\,
      d\tau\\
     &-\frac{\sinh(2a)}
      {\cosh{}t\cdot\sqrt{\sinh^{2}(2t)-\sinh^{2}(2a)}}
\end{align*}
and
\begin{align*}
   \ppz{}{a}\,\rho(a,t)=
       &\,\int_{0}^{t-a}\frac{\psi(a,\tau)}
          {16\cosh^3(a+\tau)\sqrt{\sinh(2\tau)\sinh^{5}(4a+2\tau)}}
          \,d\tau\\
       &\,-\frac{\sinh{}t\cdot{}w(a,t)}
          {\sqrt{2}\,\cosh^{2}t\cdot(\cosh(4t)-\cosh(4a))^{3/2}}\ ,
\end{align*}
where $\psi(a,\tau)$ is given by \eqref{eq:psi(t,l)} and $w(a,t)$ is defined by
\begin{align*}
  w(a,t)=&\,-5\sinh(2a)+\sinh(6a)-7\sinh(2a-4t)\\
         &\,-12\sinh(2a-2t)+4\sinh(2a+2t)+\sinh(2a+4t)\ .
\end{align*}
Recall that $t\geq{}a>0$, we have $w(a,t)\geq\sinh(6a)>0$, together with
the arguments in the proofs of Lemma \ref{lem:derivative of d0}
and Lemma \ref{lem:second derivative of d[lambda]}, the proof
of the lemma is complete.
\end{proof}

\begin{proposition}[{\cite[Proposition 4.8 and Lemma 4.9]{BSE10}}]
\label{thm:Berard-Sa Earp}
Let $\sigma_a\subset\B_{+}^2$ be the catenary given by
\eqref{eq: catenary equation}. For $0<a_1<a_2$,
the catenaries $\sigma_{a_1}$ and $\sigma_{a_2}$
intersect at most at two symmetric points and they do so if and
only if $\varrho(a_1)<\varrho(a_2)$. Furthermore we have the following
results:
\begin{enumerate}
   \item For $a_1,a_2\in(0,a_c)$, the catenaries
         $\sigma_{a_1}$ and $\sigma_{a_2}$ intersect exactly
         at two symmetric points {\rm(}see
         Figure~\ref{fig:unstable catenaries}{\rm)}.
   \item All catenaries in the family $\{\sigma_a \ |\ a\geq{}a_c\}$ foliate
         the subdomain of $\B^2_{+}$ that is bounded by the
         catenary $\sigma_{a_c}$ and the arc of
         $\partial_{\infty}\B^2_{+}$ between
         the asymptotic boundary points of $\sigma_{a_c}$
         {\rm(}see Figure~\ref{fig:stable catenaries}{\rm)}.
\end{enumerate}
\end{proposition}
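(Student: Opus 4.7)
The plan is to reduce the geometric intersection problem to the monotonicity of a scalar function on a half-line. Each catenary $\sigma_a$ is symmetric about the $y$-axis and meets it only at the single point $(0,a)$; hence for $a_1 \neq a_2$ there are no intersections on the $y$-axis, and the intersection locus in $\B_{+}^2$ is $y$-axis symmetric. It therefore suffices to count intersections in the open half-plane $x > 0$.

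For $0 < a_1 < a_2$, I would represent the right half of each $\sigma_{a_i}$ as a graph $x = x_{a_i}(y)$ on $[a_i, \infty)$ via \eqref{eq: catenary equation}, with endpoint data $x_a(a) = 0$ and $\lim_{y \to \infty} x_a(y) = \varrho(a)$, and derivative
\begin{equation*}
  x_a'(y) = \frac{\sinh(2a)}{\cosh y \, \sqrt{\sinh^2(2y) - \sinh^2(2a)}}.
\end{equation*}
Set $\Delta(y) = x_{a_1}(y) - x_{a_2}(y)$ on $[a_2, \infty)$. Then $\Delta(a_2) = x_{a_1}(a_2) > 0$ and $\lim_{y \to \infty} \Delta(y) = \varrho(a_1) - \varrho(a_2)$. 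The crux is the direct computation
\begin{equation*}
  \frac{\partial}{\partial a}\, x_a'(y) = \frac{2 \cosh(2a)\, \sinh^2(2y)}{\cosh y \, \bigl(\sinh^2(2y) - \sinh^2(2a)\bigr)^{3/2}} > 0,
\end{equation*}
which gives $x_{a_1}'(y) < x_{a_2}'(y)$ for every $y > a_2$ and hence shows $\Delta$ is strictly decreasing on $(a_2, \infty)$. Consequently $\Delta$ has at most one zero, and has one if and only if $\lim_{y \to \infty} \Delta(y) < 0$, i.e., $\varrho(a_1) < \varrho(a_2)$; combined with the $y$-axis symmetry this establishes the main assertion.

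For the two numbered items I would invoke the sign information on $\varrho'$ established inside the proof of Theorem~\ref{thm:BSE theorem}, namely $\varrho'(a) > 0$ on $(0, a_c)$ and $\varrho'(a) < 0$ on $(a_c, \infty)$. Applied to item (1): for $0 < a_1 < a_2 < a_c$ one has $\varrho(a_1) < \varrho(a_2)$, so the iff condition delivers exactly two symmetric intersection points. For item (2) the same sign information gives $\varrho(a_1) > \varrho(a_2)$ whenever $a_c \leq a_1 < a_2$, so the family $\{\sigma_a\}_{a \geq a_c}$ is pairwise disjoint, producing a lamination of the stated region.

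The main obstacle will be promoting this lamination to an honest foliation of the prescribed region. My plan is to verify (i) continuous dependence of $\sigma_a$ on $a$, so that $\sigma_a$ converges to $\sigma_{a_c}$ as $a \to a_c^{+}$; (ii) that as $a \to \infty$ the curve $\sigma_a$ recedes to the ideal boundary arc between the asymptotic endpoints of $\sigma_{a_c}$, since $(0,a) \in \sigma_a$ while $\varrho(a) \to 0$; and (iii) that the asymptotic endpoints $\pm\varrho(a)$ sweep monotonically from $\pm\varrho(a_c)$ down to $0$ along $\partial_{\infty} \B_{+}^{2}$. A standard continuity/exhaustion argument would then show that every interior point of the region bounded by $\sigma_{a_c}$ and the prescribed ideal arc lies on exactly one $\sigma_a$ with $a \in [a_c, \infty)$, completing the foliation claim.
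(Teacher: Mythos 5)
Your proposal is correct and follows essentially the same route as the paper: you compare the graphs $x=\rho(a,y)$ of the two catenaries and show the difference $\Delta(y)=\rho(a_1,y)-\rho(a_2,y)$ is strictly monotone (your computation $\partial_a x_a'(y)>0$ is just the paper's direct comparison of the two integrands), starts positive at $y=a_2$, and tends to $\varrho(a_1)-\varrho(a_2)$, giving the ``at most two symmetric intersections, exactly two iff $\varrho(a_1)<\varrho(a_2)$'' criterion. Items (1) and (2) are then deduced, as in the paper, from $\varrho'>0$ on $(0,a_c)$ and $\varrho'<0$ on $(a_c,\infty)$ established in the proof of Theorem~\ref{thm:BSE theorem}, with your continuity/exhaustion remarks for the foliation being a slightly more explicit version of what the paper leaves implicit.
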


\begin{proof}For $0<a_1<a_2$, we define a function
\begin{equation*}
   \delta(t)=\rho(a_{2},t)-\rho(a_{1},t)\ ,
   \quad{}t\geq{}a_{2}>a_{1}\ ,
\end{equation*}
where $\rho$ is defined by \eqref{eq:catenary(a,t)}.

We claim that $\delta(t)$ is an increasing function
on $[a_{2},\infty)$. Actually the derivative of $\delta(t)$ is
given by the following expression
\begin{equation*}
   \delta'(t)=\frac{1}{\cosh{}t}
   \left\{\frac{1}{\sqrt{\left(\dfrac{\sinh(2t)}{\sinh(2a_{2})}\right)^2-1}}
   -\frac{1}{\sqrt{\left(\dfrac{\sinh(2t)}{\sinh(2a_{1})}\right)^2-1}}\right\}>0\ ,
\end{equation*}
on $[a_{2},\infty)$.

Since $\delta(0)=-\rho(a_{2},a_{1})<0$ and
$\delta(\infty)=\varrho(a_{2})-\varrho(a_{1})$, we have the conclusion that
the catenaries $\sigma_{a_1}$ and $\sigma_{a_2}$
intersect at most at two symmetric points and they do so if and
only if $\varrho(a_1)<\varrho(a_2)$.

Statement (1) is true since $\varrho(a)$ is increasing on $(0,a_{c})$,
and Statement (2) is true since $\varrho(a)$ is decreasing
on $[a_{c},\infty)$
\end{proof}

\begin{figure}[htbp]
\begin{center}
  \begin{minipage}[tb]{0.5\textwidth}
  \centering
  \includegraphics[scale=1.15]{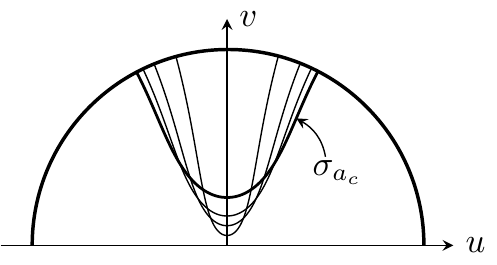}
  \caption{The family of catenaries for unstable catenoids.}
  \label{fig:unstable catenaries}
  \end{minipage}%
  \begin{minipage}[tb]{0.5\textwidth}
  \centering
  \includegraphics[scale=1.15]{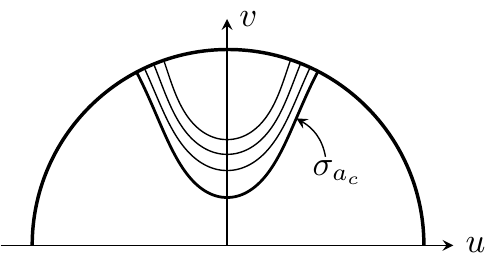}
  \caption{The family of catenaries for stable catenoids.}
  \label{fig:stable catenaries}
  \end{minipage}
\end{center}
\end{figure}

An \emph{envelope} of a family of curves in the hyperbolic
plane is a curve that is tangent to each member of the family at
some point. A point on the envelope can be considered
as the intersection of two ``adjacent'' curves, meaning the limit of
intersections of nearby curves.

According to Proposition~\ref{thm:Berard-Sa Earp}, the catenaries
$\sigma_{a_1}$ and $\sigma_{a_2}$ intersect exactly at two symmetric points
if $0<a_1<a_2<a_c$. In order to prove
Theorem~\ref{thm:main theorem}, we should show that the family of
catenaries $\{\sigma_{a}\}_{0<a<a_{c}}$ forms an envelope which
is outside the region of $\B^{2}_{+}$ foliated by the family
of catenaries $\{\sigma_a\}_{a\geq{}a_c}$, since each point in
the envelope is corresponding to the boundary of the maximal
weakly stable domain of some catenoid $\CC_{a}$ for $0<a<a_{c}$.

Recall that the the numbers $\pm{}z(a)$ denote the
only zeros of the variation field $\xi(a,s)$ on the catenoid $\CC_{a}$
defined by \eqref{eq:variation fiels I} or \eqref{eq:variation fiels II}
for each $0<a<a_{c}$.

\begin{proposition}[{\cite[Proposition 4.8 (2)]{BSE10}}]
\label{thm:small maximally weakly stable domain}
The family of catenaries $\{\sigma_{a}\ |\ 0<a<a_{c}\}$
defined in $\S$\ref{sec:catenary} has an envelope,
and the points at which $\sigma_{a}$ touches the envelope correspond to the
maximal weakly stable domain $\CC(z(a))$ defined by \eqref{eq:portion of Pi}.
Furthermore for any constant $a\in(0,a_c)$ we have
\begin{equation}\label{eq:maximal weakly stable region}
  \CC(z(a))\cap\Bigg(\bigcup_{\alpha\geq{}a_c}
  \CC_{\alpha}\Bigg)=\emptyset\ .
\end{equation}
\end{proposition}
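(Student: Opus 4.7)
The statement comprises three assertions, and I would handle the first two together through the envelope interpretation of the variation Jacobi field, then treat the disjointness separately as the principal step.

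\textbf{Step 1 (envelope and tangency).} The key identity is \eqref{eq:variation fiels I}, which gives
\[
  \xi(a,s) = -\cosh(y(a,s))\bigl(x_{a}(a,s)y_{s}(a,s)-x_{s}(a,s)y_{a}(a,s)\bigr).
\]
Because $\cosh(y(a,s))>0$, the zero set of $\xi(a,\cdot)$ coincides with the vanishing of the Jacobian $\partial(x,y)/\partial(a,s)$, which is the classical envelope condition for the $2$-parameter map $(a,s)\mapsto (x(a,s),y(a,s))$. By Theorem \ref{thm:BSE-stability}(2), for $a\in(0,a_{c})$ this Jacobian vanishes exactly at the two symmetric values $s=\pm z(a)$. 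Hence the envelope of $\{\sigma_{a}\}_{0<a<a_{c}}$ is the curve $\{(x(a,\pm z(a)),y(a,\pm z(a))):0<a<a_{c}\}$, and $\sigma_{a}$ is tangent to it precisely at the two symmetric points whose rotation about the $u$-axis produces the two boundary circles of $\CC(z(a))$. This proves the first two assertions.

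\textbf{Step 2 (disjointness \eqref{eq:maximal weakly stable region}).} By rotational symmetry about the $u$-axis, it suffices to show that for every $a\in(0,a_{c})$ and every $\alpha\geq a_{c}$ the generating arc $\sigma_{a}|_{[-z(a),z(a)]}$ is disjoint from $\sigma_{\alpha}$. By Proposition \ref{thm:Berard-Sa Earp}(2) the stable catenaries $\{\sigma_{\alpha}\}_{\alpha\geq a_{c}}$ foliate a region $\mathcal{R}\subset \B_{+}^{2}$ enclosed by $\sigma_{a_{c}}$ and the arc of $\partial_{\infty}\B_{+}^{2}$ between its asymptotic endpoints. Since $\sigma_{a}(0)=(0,a)$ lies outside $\mathcal{R}$ (as $a<a_{c}$), while $\sigma_{a}$ eventually reaches $\partial_{\infty}\B_{+}^{2}$, and since $\varrho(a)<\varrho(a_{c})$, Proposition \ref{thm:Berard-Sa Earp} provides two symmetric intersection parameters $\pm s^{*}(a)$ of $\sigma_{a}$ with $\sigma_{a_{c}}$. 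The claim then reduces to the strict inequality
\[
  z(a) < s^{*}(a), \quad a\in(0,a_{c}).
\]

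To establish this I would deform $a'$ continuously from $a^{+}$ up to $a_{c}$. For each $a'\in (a,a_{c}]$, Proposition \ref{thm:Berard-Sa Earp} yields two symmetric intersection parameters $\pm s(a,a')$ of $\sigma_{a}$ with $\sigma_{a'}$; the map $a'\mapsto s(a,a')$ is continuous, and as $a'\searrow a$ these intersections collapse onto the envelope point, so $\lim_{a'\to a^{+}}s(a,a')=z(a)$. The sign information $\xi(a,s)>0$ on $|s|<z(a)$ and $\xi(a,s)<0$ on $|s|>z(a)$ means that for $a'$ slightly greater than $a$ the catenary $\sigma_{a'}$ lies on opposite normal sides of $\sigma_{a}$ on the two regions separated by the envelope point, forcing the crossing to occur strictly beyond it: $s(a,a')>z(a)$ near $a'=a$. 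Continuity of $s(a,a')$ in $a'$, together with the transversality of intersections guaranteed by Proposition \ref{thm:Berard-Sa Earp}, then propagates the strict inequality along the path and gives $s^{*}(a)=s(a,a_{c})>z(a)$.

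\textbf{Main obstacle.} The serious step is the last one: making rigorous the monotone "opening" of the gap $s(a,a')-z(a)$ along $a'\in(a,a_{c}]$ and ruling out a pathological intermediate tangency $s(a,a')=z(a)$. The local normal-displacement analysis via $\xi$ handles a neighbourhood of $a'=a$, but globally one must either exploit Lemma \ref{lem:BSE-stability-half-catenoid} (radial Jacobi fields change sign at most once on $[0,\infty)$) to preclude re-entering the envelope region, or invoke the fact that $a_{c}$ is the \emph{unique} critical point of $\varrho$, so that distinct catenaries with distinct $\varrho$-values intersect in exactly two symmetric transverse points, ensuring the continuity deformation never degenerates.
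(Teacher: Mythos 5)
Your Step 1 and the opening reduction of Step 2 do match the paper's strategy: the paper likewise identifies the envelope contact points with the zeros $\pm z(a)$ of the variation field via \eqref{eq:variation fiels I}, and reduces \eqref{eq:maximal weakly stable region} to showing that $\sigma_a$ meets every stable catenary only at parameters $|s|>z(a)$. But the step you yourself flag as the ``main obstacle'' is exactly where the content of the proposition lies, and your proposal does not close it. Two concrete problems. First, the assertion that the sign pattern of $\xi(a,\cdot)$ forces $s(a,a')>z(a)$ for $a'$ slightly above $a$ is unjustified: the first-order normal displacement of $\sigma_{a'}$ relative to $\sigma_a$ is $(a'-a)\,\xi(a,s)+O((a'-a)^2)$, and it vanishes precisely at $s=\pm z(a)$, so on which side of $z(a)$ the crossing occurs is a second-order question; the sign of $\xi$ only tells you that the crossing parameter tends to $z(a)$ (a collapse you also assert rather than prove). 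Second, and more seriously, continuity of $a'\mapsto s(a,a')$ plus transversality of the two symmetric intersections does not propagate a strict inequality: nothing in your argument excludes that at some intermediate $a'\in(a,a_c]$ the curve $\sigma_{a'}$ passes through the envelope point $(x(a,z(a)),y(a,z(a)))$, i.e.\ $s(a,a')=z(a)$. Neither of your suggested fallbacks repairs this: Lemma~\ref{lem:BSE-stability-half-catenoid} constrains sign changes of radial Jacobi fields on a single catenoid and says nothing about where $\sigma_{a'}$ meets $\sigma_a$ when $a'$ is far from $a$, and the uniqueness of the critical point of $\varrho$ only controls the \emph{number} of intersection points, not their position relative to $\pm z(a)$.

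What the paper uses to close precisely this gap is a quantitative convexity input on the function $\rho(a,t)$ of \eqref{eq:catenary(a,t)}: $\partial\rho/\partial a<0$ for $t\geq a\geq A_3$ and $\partial^2\rho/\partial a^2<0$ for $0<a\leq A_4$, $t\geq a$ (Lemma~\ref{lem:negative second order derivative of d(a,t)}, built on Lemmas~\ref{lem:derivative of d0} and~\ref{lem:second derivative of d[lambda]}). From this it deduces that no three catenaries in the relevant range can pass through a common point (two applications of Rolle's theorem to $\alpha\mapsto\rho(\alpha,t_0)$ would produce a zero of $\partial^2\rho/\partial a^2$ in a region where it is strictly negative), hence that for fixed $a$ the height of $\sigma_a\cap\sigma_\alpha$ is strictly monotone in $\alpha$. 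That monotonicity simultaneously yields the collapse $s(a,\alpha)\to z(a)$ as $\alpha\to a$, the disjointness of the envelope from the foliated region $\bigcup_{\alpha\geq a_c}\sigma_\alpha$, and the strict bound $s(a,\alpha)>z(a)$ for all $\alpha\in(a,a_c]$, which is \eqref{eq:maximal weakly stable region}. Without this lemma or an equivalent quantitative substitute, your deformation argument remains incomplete at its decisive step.
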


\setcounter{claim}{0} 

\begin{figure}[htbp]
  \begin{center}
     \includegraphics[scale=0.9]{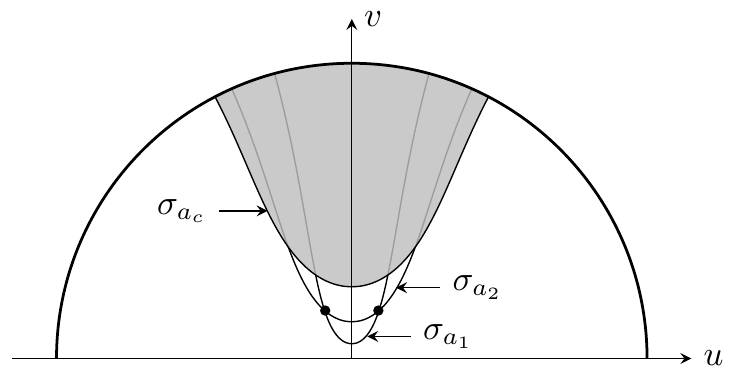}
  \end{center}
  \caption{$\mathcal{D}$ is the shadow region. In this figure,
  $a_1=0.1$ and $a_2=0.25$. We can see that
  $\sigma_{a_1}\cap\sigma_{a_2}$ is disjoint from
  $\mathcal{D}$.}
  \label{fig:catenary_lambda_shadow}
\end{figure}

\begin{proof}For each $a\in(0,a_{c})$, the catenoid $\CC_{a}$ and its
maximal weakly stable domain $\CC(z(a))$ are surfaces of revolution, so
we just need to consider the $2$-dimensional case. Let
\begin{equation*}
   \mathcal{D}=\bigcup_{a\geq{}a_c}\sigma_{a}
\end{equation*}
be the subregion of $\B_{+}^2$. We will prove the proposition by three claims.

\begin{claim}\label{claim:01}
$\sigma_{a_1}\cap\sigma_{a_2}$ is disjoint from
$\mathcal{D}$ for $0<a_1<a_2<a_c$.
\end{claim}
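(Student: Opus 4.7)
\textbf{Proof plan for Claim~\ref{claim:01}.} The plan is to pick an intersection point $p=(x_p,y_p)\in\sigma_{a_1}\cap\sigma_{a_2}$, which by the $y$-axis symmetry of all catenaries we may assume satisfies $x_p\geq 0$, and then rule out $p\in\sigma_a$ for every $a\geq a_c$. The right move is to freeze the height $y_p$ and study the single-variable function $g(a)=\rho(a,y_p)$ on $a\in(0,y_p]$; the hypothesis $p\in\sigma_{a_1}\cap\sigma_{a_2}$ translates to $g(a_1)=g(a_2)=x_p$, so we want to show $g(a)\ne x_p$ for all $a\in[a_c,y_p]$, and that $p\notin\sigma_a$ whenever $a>y_p$.

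The first main step is to invoke Lemma~\ref{lem:negative second order derivative of d(a,t)}, which provides $\partial^2\rho/\partial a^2<0$ on $R_4$ and hence strict concavity of $g$ on $(0,A_4]\cap(0,y_p]$. By the standard property of a strictly concave function with two equal endpoint values, $g(a_1)=g(a_2)=x_p$ with $a_1<a_2$ forces $g(a)<x_p$ for every $a\in(a_2,A_4]\cap(0,y_p]$ (writing the relevant midpoint as a convex combination gives the strict inequality).

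The second step handles the leftover range $a\in(A_4,y_p]$, which is nonempty only when $y_p>A_4$. Here Lemma~\ref{lem:negative second order derivative of d(a,t)} part one supplies $\partial\rho/\partial a<0$ on $R_3=\{t\geq a\geq A_3\}$; since $A_4>A_3$, this makes $g$ strictly decreasing on $[A_4,y_p]$, giving $g(a)\leq g(A_4)<x_p$. Finally, for $a>y_p$ the catenary $\sigma_a$ is confined to $\{y\geq a\}$ by the very definition of its initial data, so it cannot pass through $p$. Combining the three ranges yields $g(a)<x_p$ for all $a\in(a_2,y_p]$ and $p\notin\sigma_a$ for $a>y_p$; since the numerical inequalities $a_2<a_c\approx 0.496<A_3\approx 0.531<A_4\approx 0.716$ place $[a_c,\infty)\subset(a_2,\infty)$, we conclude $p\notin\sigma_a$ for any $a\geq a_c$, i.e.\ $p\notin\mathcal{D}$.

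The main obstacle I anticipate is purely bookkeeping rather than analysis: the concavity in $a$ is only proved on the window $(0,A_4]$ and the monotonicity in $a$ only on $[A_3,\infty)$, so one must verify that these two windows together cover all $a\in[a_c,y_p]$ uniformly in $y_p$. This is exactly what the chain $a_c<A_3<A_4$ guarantees, so once the numerical values from \eqref{eq:constant Lambda-3} and \eqref{eq:constant Lambda-4} are in hand the argument closes without further estimates.
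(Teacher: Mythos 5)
Your proposal is correct and is essentially the paper's own argument: both fix the height $t_0=y_p$ of the intersection point and feed the two sign statements of Lemma~\ref{lem:negative second order derivative of d(a,t)} (concavity of $\alpha\mapsto\rho(\alpha,t_0)$ on the window $(0,A_4]$, strict decrease on $[A_3,\infty)$) into the same function $\rho(\cdot,t_0)$. The only difference is presentational — the paper argues by contradiction, producing a third catenary $\sigma_{a_3}$ through the point and applying Rolle's theorem twice (mislabeled there as L'H\^opital), whereas you conclude directly from the concavity/monotonicity shape of $g(a)=\rho(a,y_p)$; your boundary case $a=y_p$ is also immediate since $\rho(y_p,y_p)=0<x_p$.
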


\begin{proof}[{\bf{Proof of Claim~\ref{claim:01}}}]
See Figure~\ref{fig:catenary_lambda_shadow}.
Actually if $\sigma_{a_1}\cap\sigma_{a_2}\subset{}\mathcal{D}$,
since $\mathcal{D}$ is foliated
by the catenaries $\{\sigma_\alpha\}_{\alpha\geq{}a_c}$, there
exists $a_3>a_c$ such that $\sigma_{a_1}$, $\sigma_{a_2}$
and $\sigma_{a_3}$ intersect at the same points. Let $t_0$ be the
$y$-coordinate of the intersection points (recall that we equip
$\B^{2}_{+}$ with the warped product metric
\eqref{eq:warped product metric}), then $t_0>a_3$ (see the proof
of Lemma 4.9 in \cite{BSE10}). Consider the function
\begin{equation*}
  \varphi(\alpha)=\rho(\alpha,t_0)\ ,
  \quad{}\alpha\in[a_1,a_3]\ ,
\end{equation*}
where $\rho(a,t)$ is defined by \eqref{eq:catenary(a,t)}.

By our assumption, $\varphi(a_1)=\varphi(a_2)=\varphi(a_3)$. By
L'H{\^o}pital's rule,
there exist $a_4\in(a_1,a_2)$ and $a_5\in(a_2,a_3)$ such that
$\varphi'(a_4)=\varphi'(a_5)=0$. By
Lemma~\ref{lem:negative second order derivative of d(a,t)},
$a_5<A_3$ and then $a_5<A_4$. Applying L'H{\^o}pital's rule again,
there exists $a_6\in(a_4,a_5)\subset(a_1,a_3)\cap(0,A_4)$ such that
$\varphi''(a_6)=0$. This is impossible according to
Lemma~\ref{lem:negative second order derivative of d(a,t)}.
Therefore $\sigma_{a_1}\cap\sigma_{a_2}$ is disjoint from
$\mathcal{D}$ for $0<a_1<a_2<a_c$.
\end{proof}

\begin{claim}\label{claim:02}
For any $a\in(0,a_{c})$, there exist two symmetric points
$p_{a}^{\pm}\subset\sigma_{a}$ disjoint from $\mathcal{D}$ such that
\begin{equation}\label{eq:endpoints of generating curve of MWSR}
   \lim\limits_{\alpha\to{}a}(\sigma_a\cap\sigma_\alpha)=\{p_{a}^{\pm}\}\ .
\end{equation}
\end{claim}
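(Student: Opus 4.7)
The plan is to identify the two limit points on $\sigma_a$ with the values of the arc-length parameter where the variation Jacobi field $\xi(a,\cdot)$ of Proposition~\ref{prop:vertical and variation fields} vanishes, namely $s=\pm z(a)$, and then to deduce disjointness from $\mathcal{D}$ by combining Claim~\ref{claim:01} with the foliation structure of Proposition~\ref{thm:Berard-Sa Earp}(2).

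For $\alpha\in(0,a_c)\setminus\{a\}$ close to $a$, Proposition~\ref{thm:Berard-Sa Earp}(1) provides two symmetric points $p_\alpha^\pm\in\sigma_a\cap\sigma_\alpha$. Writing $p_\alpha^+=(x(a,s_\alpha),y(a,s_\alpha))$ in the arc-length parametrization \eqref{eq:parametric equation of catenary} of $\sigma_a$, the membership $p_\alpha^+\in\sigma_\alpha$ combined with the identity \eqref{eq:x(a,s) and rho(a,t)} becomes
\begin{equation*}
\rho(\alpha,y(a,s_\alpha))=\rho(a,y(a,s_\alpha)),
\end{equation*}
and the mean value theorem in the first slot of $\rho$ produces $\alpha^{*}$ between $a$ and $\alpha$ with $\rho_a(\alpha^{*},y(a,s_\alpha))=0$. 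After checking that $\{s_\alpha\}$ stays bounded as $\alpha\to a$, one extracts a convergent subsequence $s_{\alpha_k}\to s_a$ and obtains $\rho_a(a,y(a,s_a))=0$ by continuity.

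Differentiating $x(a,s)=\rho(a,y(a,s))$ in $a$, and using $\rho_y(a,y(a,s))=x_s(a,s)/y_s(a,s)$ along $\sigma_a$ together with \eqref{eq:variation fiels I}, one computes
\begin{equation*}
\rho_a(a,y(a,s))=\frac{x_a(a,s)y_s(a,s)-x_s(a,s)y_a(a,s)}{y_s(a,s)}=-\frac{\xi(a,s)}{y_s(a,s)\cosh(y(a,s))}.
\end{equation*}
Since $y_s(a,s)>0$ for $s>0$, the positive zeros of $\rho_a(a,y(a,\cdot))$ coincide with those of $\xi(a,\cdot)$; by the proof of Theorem~\ref{thm:BSE-stability}(2), there is exactly one such zero, $z(a)$. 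Therefore $s_a=z(a)$ is the unique cluster point, so $s_\alpha\to z(a)$, and $\lim_{\alpha\to a}(\sigma_a\cap\sigma_\alpha)=\{p_a^+,p_a^-\}$ with $p_a^\pm=(\pm x(a,z(a)),y(a,z(a)))\in\sigma_a$.

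Finally, Claim~\ref{claim:01} gives $p_\alpha^\pm\in\B^{2}_{+}\setminus\mathcal{D}$, so the limit $p_a^\pm$ lies in the closure of $\B^{2}_{+}\setminus\mathcal{D}$. Since Proposition~\ref{thm:Berard-Sa Earp}(2) makes $\mathcal{D}$ closed in $\B^{2}_{+}$ with topological boundary $\sigma_{a_c}$, it remains only to exclude $p_a^+\in\sigma_{a_c}$. The intersection $\sigma_a\cap\sigma_{a_c}$ is transverse, because the derivative $\delta'$ appearing in the proof of Proposition~\ref{thm:Berard-Sa Earp} is strictly positive at any crossing; hence $\sigma_a$ is not tangent at such a point to any neighboring $\sigma_\alpha$ and cannot arise as a limit of envelope points. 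I expect this last transversality step to be the main obstacle, because Claim~\ref{claim:01} and closedness alone only rule out the interior of $\mathcal{D}$, not the boundary component $\sigma_{a_c}$; the separation really requires infinitesimal information about the crossing rather than a soft limit argument.
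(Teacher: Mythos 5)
Your main construction is sound and genuinely different from the paper's: you characterize the cluster points of $\sigma_a\cap\sigma_\alpha$ by the envelope condition $\rho_a\bigl(a,y(a,s)\bigr)=0$ and convert it, via \eqref{eq:variation fiels I}, into $\xi(a,s)=0$, so that uniqueness of the positive zero $z(a)$ forces convergence. (The paper instead proves existence of the limit by a monotonicity argument on the heights of the intersections, showing $y_{12}<y_{13}<y_{23}$ for three catenaries by the same Rolle-type argument as in Claim~\ref{claim:01}, and only later, in its Claim 3, identifies the touching points with the zeros of $\xi$; your route effectively proves that identification at the same time, which is a real economy.) Two caveats on this part: the boundedness of $\{s_\alpha\}$ is asserted, not checked — it can be fixed, e.g.\ because $\rho_a(\alpha^*,t)\to\varrho'(\alpha^*)>0$ as $t\to\infty$, uniformly for $\alpha^*$ in a compact neighborhood of $a$ inside $(0,a_c)$, so the relation $\rho_a(\alpha^*,y^*)=0$ keeps $y^*$ bounded — but as written it is a gap you should close.

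The genuine gap is the last step, exactly where you suspected. Transversality of $\sigma_a$ and $\sigma_{a_c}$ at a crossing (i.e.\ $\delta'>0$, equivalently $\rho_y(a,y_0)\neq\rho_y(a_c,y_0)$) carries no information about $\rho_a(a,y_0)$, which is the quantity that characterizes your limit point; and the clause ``$\sigma_a$ is not tangent at such a point to any neighboring $\sigma_\alpha$'' is vacuous, since \emph{any} two distinct catenaries of the family cross transversally wherever they meet — envelope points arise as limits of transverse crossings, not as tangencies with nearby members. So nothing in that sentence excludes $p_a^+\in\sigma_{a_c}$. What does work is the same concavity mechanism as in Claim~\ref{claim:01}: if $p_a^+=(x_0,y_0)\in\sigma_{a_c}$, set $\varphi(\beta)=\rho(\beta,y_0)$ for $\beta\in[a,a_c]$ (well defined since $y_0\geq a_c$). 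You already have $\varphi'(a)=\rho_a(a,y_0)=0$ and $\varphi(a)=\varphi(a_c)=x_0$, so Rolle gives $\beta_1\in(a,a_c)$ with $\varphi'(\beta_1)=0$; since $a_c<A_3<A_4$, the interval $(a,\beta_1)$ lies in $(0,A_4)$, and a second application of Rolle yields a zero of $\varphi''$ there, contradicting Lemma~\ref{lem:negative second order derivative of d(a,t)}. Alternatively, the paper's monotonicity of intersection heights gives the exclusion for free: the limit height as $\alpha\to a^{+}$ is strictly below the height of $\sigma_a\cap\sigma_\alpha$ for any fixed $\alpha\in(a,a_c)$, hence strictly below the crossing of $\sigma_a$ with $\sigma_{a_c}$. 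Either repair completes your proof; the transversality argument as stated does not.
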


\begin{proof}[{\bf{Proof of Claim~\ref{claim:02}}}]
For $0<a_1<a_2<a_3<a_c$, let $y_{ij}$ denote the $y$-coordinate of
the intersection of $\sigma_{a_i}$ and $\sigma_{a_j}$
($1\leq{}i<j\leq{}3$), then we have $y_{12}<y_{13}<y_{23}$
(see Figure~\ref{fig:catenary_intersection}).

\begin{figure}[htbp]
  \begin{center}
     \includegraphics[scale=0.9]{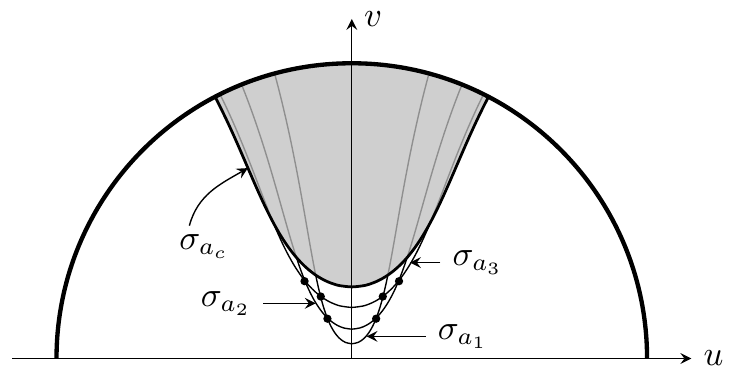}
  \end{center}
  \caption{For $0<a_i<a_j<a_c$, let $y_{ij}$ denote the $y$-coordinate
  of the (symmetric) points $\sigma_{a_i}\cap\sigma_{a_j}$. In this figure,
  $a_1=0.1$, $a_2=0.2$ and $a_3=0.35$, and one can see that
  $y_{12}<y_{13}<y_{23}$.}
  \label{fig:catenary_intersection}
\end{figure}

Otherwise, we must have
$y_{23}<y_{13}<y_{12}$, this may imply there exists $a_4\in(a_3,a_c)$
such that $\sigma_{a_1}$, $\sigma_{a_2}$ and $\sigma_{a_4}$ intersect
at the same points. But this is impossible according to the
similar argument as in Claim \ref{claim:01}.

Therefore as $\alpha\to{}a^{+}$, the $y$-coordinates of
$\sigma_{a}\cap\sigma_{\alpha}$ are decreasing, whereas
as $\alpha\to{}a^{-}$, the $y$-coordinates of
$\sigma_{a}\cap\sigma_{\alpha}$ are increasing, which can imply that
$\lim\limits_{\alpha\to{}a}(\sigma_a\cap\sigma_\alpha)$
exists, and the points $\{p_{a}^{\pm}\}$ are disjoint from $\mathcal{D}$
because of the statement in Claim~\ref{claim:01}.
\end{proof}

By Claim~\ref{claim:01} and Claim~\ref{claim:02}, we actually have proved
that the catenaries $\{\sigma_{a}\}_{0<a<a_{c}}$ has an envelope curve,
denoted by
\begin{equation*}
   \Gamma=\{p_{a}^{\pm}\ |\ 0<a<a_{c}\}\cup\{(0,0)\}\ ,
\end{equation*}
which is disjoint from the region
$\mathcal{D}$ (see Figure~\ref{fig:catenary_envelope_curve}).

Recall that each catenary $\sigma_{a}$ can be parametrized by arc length as follows:
\begin{equation*}
   s\mapsto(x(a,s),y(a,s))
\end{equation*}
for $-\infty<s<\infty$, where $x(\cdot,\cdot)$ and $y(\cdot,\cdot)$ are defined by
the equations \eqref{eq:x(a,s)} and \eqref{eq:y(a,s)} respectively.
For each $a\in(0,a_{c})$, suppose that
\begin{equation*}
   p_{a}^{\pm}=(\pm{}x(a,s_a),y(a,s_a))\ .
\end{equation*}

\begin{claim}\label{claim:03}
For any $a\in(0,a_{c})$, we have $z(a)=s_{a}$, where
$\pm{}z(a)$ are the only zeros of the variation field
$\xi(a,\cdot)$ given by \eqref{eq:variation fiels I}
and \eqref{eq:variation fiels II}.
\end{claim}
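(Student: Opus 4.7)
My plan is to identify the zero set of the variation Jacobi field with the envelope condition on the catenaries, using the factorization \eqref{eq:variation fiels I},
$$\xi(a,s) = -\cosh\bigl(y(a,s)\bigr)\bigl(x_a(a,s) y_s(a,s) - x_s(a,s) y_a(a,s)\bigr).$$
Since $\cosh y > 0$, one has $\xi(a,s) = 0$ if and only if the Jacobian $J(a,s) := x_a y_s - x_s y_a$ of the parametrization $\Phi(a,s) := (x(a,s), y(a,s))$ vanishes. This is exactly the classical condition for $\Phi(a,s)$ to be a critical value of $\Phi$, i.e., for $(x(a,s), y(a,s))$ to lie on the envelope of $\{\sigma_a\}_{0 < a < a_c}$. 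Thus the claim reduces to showing that $J(a, s_a) = 0$, and then invoking uniqueness of the positive zero of $\xi(a, \cdot)$.

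To prove $J(a, s_a) = 0$, I use the characterization of $p_a^+$ from Claim~\ref{claim:02} as $p_a^+ = \lim_{\alpha \to a}(\sigma_a \cap \sigma_\alpha)$. For $\alpha$ near $a$, write the right-hand intersection as $p_\alpha = \Phi(a, s_\alpha) = \Phi(\alpha, \tau_\alpha)$, with $s_\alpha, \tau_\alpha \to s_a$. Assuming $\sigma_a$ and $\sigma_\alpha$ meet transversally at $p_\alpha$ for $\alpha \neq a$ close to $a$, the implicit function theorem applied to the system $\Phi(a, s) = \Phi(\alpha, \tau)$ makes $s_\alpha$ and $\tau_\alpha$ smooth functions of $\alpha$. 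Differentiating $\Phi(a, s_\alpha) = \Phi(\alpha, \tau_\alpha)$ in $\alpha$ and evaluating at $\alpha = a$ gives
$$\Phi_a(a, s_a) = \Phi_s(a, s_a)\bigl(s_a' - \tau_a'\bigr),$$
so $\Phi_a(a, s_a)$ is proportional to $\Phi_s(a, s_a)$. Since $s$ is an arc-length parameter, $\Phi_s \neq 0$, and therefore $J(a, s_a) = 0$.

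Combining the two observations yields $\xi(a, s_a) = 0$. Now $x(a, \cdot)$ is odd in $s$ by \eqref{eq:x(a,s)} and $p_a^+$ has positive first coordinate, so $s_a > 0$. By Lemma~\ref{lem:BSE-stability-half-catenoid} together with the argument in the proof of Theorem~\ref{thm:BSE-stability}(2), the only zeros of the even function $\xi(a, \cdot)$ on $\R$ are $\pm z(a)$, so the unique positive zero is $z(a)$. Hence $s_a = z(a)$. The identity \eqref{eq:maximal weakly stable region} then follows immediately, since $\CC(z(a)) = \CC(a, [-z(a), z(a)])$ is swept by the arc of $\sigma_a$ between $p_a^-$ and $p_a^+$, which lies on the side of the envelope $\Gamma$ opposite the foliated region $\mathcal{D}$ by Claim~\ref{claim:01} and Claim~\ref{claim:02}.

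The main obstacle will be establishing transversality of $\sigma_a$ and $\sigma_\alpha$ at $p_\alpha$ for $\alpha \neq a$ close to $a$, in order to apply the implicit function theorem and obtain smooth dependence of $s_\alpha, \tau_\alpha$ on $\alpha$. This amounts to excluding tangential intersections away from the envelope $\Gamma$. One can argue that any tangential intersection would force the tangency point to lie on $\Gamma$, contradicting Claim~\ref{claim:01} (together with an analogous statement on the other side of the envelope). Once transversality is in hand, the differentiation argument above is routine and the identification $s_a = z(a)$ falls out cleanly.
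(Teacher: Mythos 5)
The overall reduction is sound and matches the paper's: by \eqref{eq:variation fiels I} the zeros of $\xi(a,\cdot)$ are the zeros of the Jacobian $x_ay_s-x_sy_a$, and once that Jacobian vanishes at $(a,s_a)$, evenness, $s_a>0$ and the uniqueness of zeros (Lemma~\ref{lem:BSE-stability-half-catenoid}, with $E(a)>0$ for $a<a_c$) force $s_a=z(a)$. The gap is in your derivation of $x_ay_s-x_sy_a=0$ at $(a,s_a)$. The implicit function theorem gives smooth $s_\alpha,\tau_\alpha$ only on a punctured neighbourhood of $\alpha=a$: at $\alpha=a$ the system $\Phi(a,s)=\Phi(\alpha,\tau)$ is degenerate (its solution set contains the whole diagonal $s=\tau$ and the $(s,\tau)$-Jacobian $\det(\Phi_s,-\Phi_s)$ vanishes), so there is no IFT statement at the very point where you ``differentiate and evaluate at $\alpha=a$''. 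The derivatives $s_a'$ and $\tau_a'$ are precisely what is not known to exist; continuity of $s_\alpha,\tau_\alpha$ at $\alpha=a$ (Claim~\ref{claim:02}) together with smoothness for $\alpha\neq a$ does not supply them, and without them the identity $\Phi_a(a,s_a)=\Phi_s(a,s_a)(s_a'-\tau_a')$ — the heart of your argument — is unavailable. In addition, your fallback for transversality (``a tangential intersection would force the tangency point onto $\Gamma$'') is asserted rather than proved; the transversality that actually holds here comes from the graph structure: on the right half both catenaries are graphs $x=\rho(\cdot,t)$ over the $y$-coordinate, and $\delta(t)=\rho(a_2,t)-\rho(a_1,t)$ has $\delta'(t)>0$ (proof of Proposition~\ref{thm:Berard-Sa Earp}), so every intersection is a crossing.

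Both difficulties can be bypassed, and this brings you back to what the paper does. Since $x(a,s)=\rho(a,y(a,s))$ for $s\geq 0$ by \eqref{eq:x(a,s) and rho(a,t)}, the chain rule gives $x_ay_s-x_sy_a=\partial_a\rho\bigl(a,y(a,s)\bigr)\,y_s(a,s)$, so for $s>0$ one has $\xi(a,s)=0$ if and only if $\partial_a\rho(a,y(a,s))=0$. Now let $t_\alpha$ be the $y$-coordinate of the right-hand intersection point of $\sigma_a$ and $\sigma_\alpha$; then $\rho(a,t_\alpha)=\rho(\alpha,t_\alpha)$, so the mean value theorem in the first variable gives $\beta_\alpha$ between $a$ and $\alpha$ with $\partial_a\rho(\beta_\alpha,t_\alpha)=0$. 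Letting $\alpha\to a$, Claim~\ref{claim:02} gives $t_\alpha\to y(a,s_a)>a$, and continuity of $\partial_a\rho$ yields $\partial_a\rho(a,y(a,s_a))=0$, hence $\xi(a,s_a)=0$; your uniqueness step then finishes the proof. No IFT, no differentiability of the intersection parameters, and no transversality are needed. The paper's own proof is the tangency version of this same computation: it uses that the envelope $\Gamma$ is tangent to $\sigma_a$ at $p_a^{\pm}$, so $(x_a,y_a)$ and $(x_s,y_s)$ are proportional at $(a,s_a)$, and reads off $\xi(a,s_a)=0$ from \eqref{eq:variation fiels I}.
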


\begin{proof}[{\bf{Proof of Claim~\ref{claim:03}}}]
By the property of the envelope curve $\Gamma$, it
is tangent to each catenary $\sigma_a$ at the points
$p_{a}^{\pm}$ for $0<a<a_{c}$. Thus $\sigma_a$ and $\Gamma$ have
the same tangent line at either $p_{a}^{+}$ or $p_{a}^{-}$.

The tangent vector to the catenary $\sigma_a$ at $p_{a}^{+}$
is $(x_{s}(a,s_a),y_{s}(a,s_a))$, and the
tangent vector to the envelope $\Gamma$ at $p_{a}^{+}$
is $(x_{a}(a,s_a),y_{a}(a,s_a))$. These two vectors
must be proportional, therefore we have
\begin{equation*}
   x_{a}(a,s_a)y_{s}(a,s_a)-x_{s}(a,s_a)y_{a}(a,s_a)=0
\end{equation*}
for all $0<a<a_{c}$. Now by \eqref{eq:variation fiels I},
the variation field $\xi(a,s)$ on the catenoid $\CC_{a}$ has two symmetric
zeros at $s_{a}$ and $-s_{a}$. On the other hand, according to
Lemma \ref{lem:BSE-stability-half-catenoid}, these are the only zeros of
$\xi(a,s)$ for $s\in(-\infty,\infty)$. Thus we have $z(a)=s_a$ for all
$0<a<a_c$.
\end{proof}

Now according to Claim \ref{claim:03}, we have
\eqref{eq:maximal weakly stable region}, and then the proof of the whole proposition
is complete.
\end{proof}

\begin{figure}[htbp]
  \begin{center}
     \includegraphics[scale=0.9]{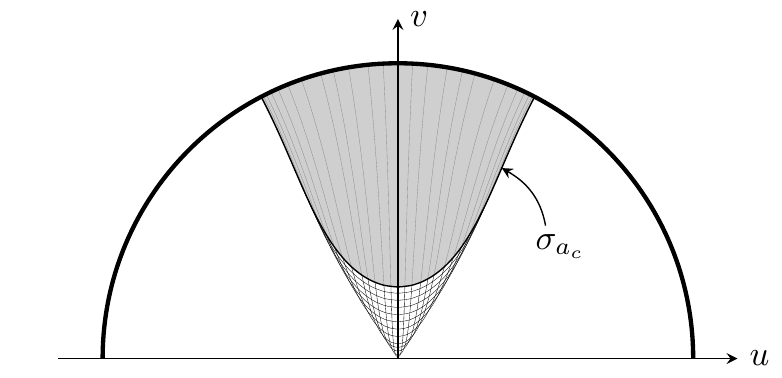}
  \end{center}
  \caption{$\mathcal{D}$ is the shadow region. The family of catenaries
  $\sigma_{a}$ for $0<a<a_{c}$ forms an envelope curve, which is disjoint
  from $\mathcal{D}$.}
  \label{fig:catenary_envelope_curve}
\end{figure}

\bibliographystyle{amsalpha}
\bibliography{ref_stability}
\end{document}